\documentclass{amsart}
\usepackage{latexsym}
\usepackage{amsmath,amsfonts,epsfig, graphics, graphicx, amsthm, amssymb}
\usepackage{dsfont}
\usepackage[shortlabels]{enumitem}
\input{xy}
\xyoption{all}

\newtheorem{theorem}{Theorem}[section]
\newtheorem{lemma}[theorem]{Lemma}
\newtheorem{corollary}[theorem]{Corollary}

\newtheorem*{GRH}{Generalized Riemann Hypothesis}

\pdfoutput=1

\begin{document}
\title{Primes in arithmetic progressions and Siegel zeroes}
\author[T. Wright]{Thomas Wright}
\address{Wofford College\\429 N. Church St.\\Spartanburg, SC 29302\\USA}

\begin{abstract}
Let $\chi$ be a Dirichlet character mod $D$ with $L(s,\chi)$ its associated $L$-function, and let $\psi(x,q,a)$ be, as usual, Chebyshev's prime-counting function for the primes of the arithmetic progression $a$ (mod $q$) with $(a,q)=1$.  Let $\chi$ be a primitive character modulo $D$, and let $\nu>0$ be small.  We prove that if $L(s,\chi)$ has a Siegel zero at $s=\beta=1-\frac{1}{\eta\log D}$ with $\eta>\eta_0(\nu)$ for some large $\eta_0(\nu)$, there exists a range of $x$ for which the asymptotic
$$\psi(x,q,a)=\frac{\psi(x)}{\phi(q)}\left[1-\chi\left(\frac{aD}{(q,D)}\right)+O(\varepsilon_{\eta_0})\right]$$
holds for $q<x^{\frac{30}{59}-\nu}$.  We also show slightly better bounds for $q$ if we take an average over a range of $q$, finding an Elliott-Halberstam-type result for $q\sim Q$ on the range $Q<x^{\frac{16}{31}-\nu}$.
This improves on a 2003 result of Friedlander and Iwaniec that requires $q<x^{\frac{233}{462}}$ and builds on recent work of Sachpazis.
\end{abstract}
\maketitle

\section{Introduction}

In 1837, Peter Gustav Lejeune Dirichlet proved the prime number theorem in arithmetic progressions by introducing (in somewhat different notation) his eponymous $L$-function:
$$L(s,\chi)=\sum_{n=1}^\infty \frac{\chi(n)}{n^s}.$$
Here, $\chi$ is a Dirichlet character modulo an integer $q>2$.  We will assume that $\chi$ is non-principal, and hence the above sum is convergent for $Re(s)>0$.

Dirichlet's studies raised the question of when this function $L(s,\chi)$ equals zero.  In particular, the zero-free region around $s=1$ led to Dirichlet's theorem on the asymptotic density of prime numbers in arithmetic progressions, while larger zero-free regions would allow for better error terms for this theorem.  Indeed, one of the most famous conjectures in mathematics is the belief that all of these zeroes are, in fact, on the half-line:

\begin{GRH}
For a Dirichlet character $\chi$, let $L(s,\chi)=0$ for $s=\sigma+it$ with $\sigma>0$.  Then $\sigma=\frac 12$.
\end{GRH}

Of course, we are nowhere close to proving this.  In the case where the zero is real, the best effective bound comes from Landau's 1918 paper \cite{La}:
\begin{theorem}[Landau, 1918]
There exists an effectively computable positive constant $C$ such that for any $q$ and any character $\chi$ mod $q$, if $L(s,\chi)=0$ and $s$ is real, then
$$s<1-\frac{C}{q^\frac 12 \log^2 q}.$$
\end{theorem}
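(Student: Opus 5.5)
The plan is to reduce to real characters and then combine the mean value theorem with the class-number-style lower bound for $L(1,\chi)$. If $\chi$ is complex, then $\overline{\chi}\neq\chi$. A real zero $s$ of $L(s,\chi)$ is then also a zero of $L(s,\overline{\chi})=\overline{L(\bar s,\chi)}$. Applying the classical de la Vallée Poussin/Landau argument with $\zeta^3 L(\sigma,\chi)^4L(\sigma,\chi^2)$ to the product $L(s,\chi)L(s,\overline\chi)$, one gets a double real zero. That is excluded in the region $s>1-c/\log q$, which is far stronger than the claimed bound. Likewise, we may pass from $\chi$ to the primitive character inducing it, since the two $L$-functions differ only by finitely many Euler factors $(1-\chi(p)p^{-s})$. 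These factors have no real zeros in $(0,1)$ except trivial ones at $s=0$, which is irrelevant here. So it suffices to treat a primitive real (quadratic) character $\chi$ modulo $q$.

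For real $\chi$, the key input is an effective lower bound
$$L(1,\chi)\gg q^{-1/2}.$$
I would obtain it from Dirichlet's class number formula: $L(1,\chi)=\frac{2\pi h}{w\sqrt q}$ for $\chi(-1)=-1$, and $L(1,\chi)=\frac{h\log\varepsilon}{\sqrt q}$ for $\chi(-1)=1$. Since $h\ge1$ and the fundamental unit satisfies $\varepsilon\ge\frac{1+\sqrt5}{2}$, both cases give $L(1,\chi)\ge c_0q^{-1/2}$ with an explicit $c_0$. One could alternatively use the elementary argument with $\sum_{n\le x} (1*\chi)(n)n^{-1/2}$, but the class number formula is cleanest.

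Next I need an upper bound for the derivative on the real segment near $1$: $|L'(\sigma,\chi)|\ll\log^2 q$ for $1-\frac1{\log q}\le\sigma\le1$. This follows by partial summation. Write $L'(\sigma,\chi)=-\sum\chi(n)(\log n)n^{-\sigma}$. Split at $n=q$, bound the initial segment trivially by $\sum_{n\le q}(\log n)n^{-\sigma}\ll\log^2q$ (using $n^{1-\sigma}\le e$ for $n\le q$), and treat the tail with Pólya–Vinogradov or simply $|\sum_{n\le N}\chi(n)|\le q$. The tail then contributes $\ll q\cdot q^{-\sigma}\log q\ll\log q$.

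Finally, suppose $L(\beta,\chi)=0$ with $\beta$ real. If $\beta\le 1-\frac1{\log q}$, the claim is immediate for $C$ small, since $\frac{C}{q^{1/2}\log^2q}<\frac1{\log q}$. Otherwise the mean value theorem gives
$$c_0q^{-1/2}\le L(1,\chi)=L(1,\chi)-L(\beta,\chi)=(1-\beta)L'(\xi,\chi)\ll(1-\beta)\log^2q,$$
so $1-\beta\ge Cq^{-1/2}\log^{-2}q$, with all constants explicit. The main obstacle is the lower bound for $L(1,\chi)$. Everything hinges on that effective input, and the derivative bound and the reductions are routine. If one wishes to avoid class field theory, the step requiring the most care is making the elementary positivity argument for $\sum (1*\chi)(n)n^{-1/2}$ fully effective.
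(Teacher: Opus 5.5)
Your argument is correct. The paper gives no proof of this statement: it quotes it from Landau's 1918 paper as background, so there is no proof of its own to compare against. What you wrote is the standard textbook proof, essentially the one at the end of Chapter 14 of Davenport's \emph{Multiplicative Number Theory}:
\begin{itemize}
\item dispose of complex characters using the classical zero-free region;
\item pass to the primitive quadratic character;
\item get the effective bound $L(1,\chi)\gg q^{-1/2}$ from the class number formula;
\item bound $L'(\sigma,\chi)\ll\log^2 q$ on $[1-1/\log q,1]$;
\item apply the mean value theorem.
\end{itemize}

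A few small points should be tidied.
\begin{itemize}
\item For even real characters the class number formula reads $L(1,\chi)=2h\log\varepsilon/\sqrt q$. Your formula is off by a factor $2$, which is harmless for a lower bound.
\item The principal character is not covered by your reductions as written. It reduces to $\zeta(s)$, which has no zeros in $(0,1)$ because $(1-2^{1-s})\zeta(s)=\sum_{n\ge1}(-1)^{n-1}n^{-s}>0$ there.
\item Your sentence on complex characters merges two separate arguments. Either one suffices on its own:
\begin{itemize}
\item the $3$--$4$--$1$ inequality at $t=0$, which is legitimate because $\chi^2$ is non-principal; or
\item the observation that a real zero $\beta$ would vanish for both $L(s,\chi)$ and $L(s,\bar\chi)$, contradicting the ``at most one zero of $\prod_\chi L(s,\chi)$'' theorem that the paper also quotes.
\end{itemize}
\item When passing to the primitive modulus $q^*\mid q$, note that $t^{1/2}\log^2 t$ is increasing. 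This is what makes the bound for $q^*$ imply the bound for $q$.
\item In the tail estimate for $L'$, monotonicity of $t^{-\sigma}\log t$ on $[q,\infty)$ requires $\sigma\ge 1/\log q$. That holds only once $q\ge e^2$. The finitely many smaller moduli are handled by the same partial summation with a cruder (still effective) constant.
\end{itemize}
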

The best ineffective bound was proven by Siegel in 1935 \cite{Si}, who was able to improve the denominator in Landau's result to any $q^\varepsilon$ but at the cost of no longer being able to compute the associated constant:
\begin{theorem}[Siegel, 1935]
For any $\varepsilon>0$ there exists a positive constant $C(\varepsilon)$ such that if $L(s,\chi)=0$ and $s$ is real then
$$s<1-C(\varepsilon)q^{-\varepsilon}.$$
\end{theorem}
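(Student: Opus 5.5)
The plan is the classical argument for Siegel's theorem in Estermann's form, via an auxiliary product of two $L$-functions. Only real zeros of quadratic characters matter: for complex $\chi$ the classical zero-free region rules out real zeros in $[1-c/\log q,1)$, and $C(\varepsilon)$ can absorb that range. So assume $\chi$ is real, primitive mod $q$, and that $L(\beta,\chi)=0$ with $\beta$ close to $1$. It then suffices to prove the lower bound $L(1,\chi)\gg_\varepsilon q^{-\varepsilon}$, together with the standard conversion between this and the zero-free statement.

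\emph{Step 1: the two-character zeta product.} Fix $\varepsilon>0$ and split into two cases. If no real primitive $\chi_1$ has a real zero in $(1-\varepsilon/10,1)$, then the argument below, run with such a $\chi_1$, gives the claim directly. Otherwise, fix one such $\chi_1$ mod $q_1$ with zero $\beta_1$; its data $(q_1,\beta_1)$ is the ineffective input. For any other real primitive $\chi$ mod $q$, set
$$F(s)=\zeta(s)L(s,\chi_1)L(s,\chi)L(s,\chi\chi_1).$$
This is the Dedekind zeta function of a biquadratic field, so its Dirichlet coefficients $a_n$ satisfy $a_n\ge0$ and $a_1=1$. Its residue at $s=1$ is $\lambda=L(1,\chi_1)L(1,\chi)L(1,\chi\chi_1)$.

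\emph{Step 2: the key lower bound.} Expand $F$ in a Taylor series about $s=2$. All coefficients have the right sign because $a_n\ge0$, and $F(s)-\lambda/(s-1)$ is entire with polynomial growth in $qq_1$. Taking Cauchy estimates on a circle of radius $3/2$ then yields, for $\sigma\in(7/8,1)$,
$$F(\sigma)\ge \tfrac12-\frac{c\lambda}{1-\sigma}(qq_1)^{c(1-\sigma)}.$$
Evaluate this at $\sigma=\beta_1$, where $F(\beta_1)=0$. This gives $\lambda\gg(1-\beta_1)(qq_1)^{-c(1-\beta_1)}$. Combine it with the upper bounds $L(1,\chi_1)\ll\log q_1$ and $L(1,\chi\chi_1)\ll\log(qq_1)$. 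Since $1-\beta_1<\varepsilon/10$, we get $L(1,\chi)\gg_\varepsilon q^{-\varepsilon}$, with a constant depending on $q_1,\beta_1$. It is therefore ineffective.

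\emph{Step 3: back to zeros.} By the mean value theorem, $L(1,\chi)=(1-\beta)L'(\sigma^*,\chi)$ for some $\sigma^*\in(\beta,1)$, and $L'(\sigma,\chi)\ll\log^2q$ near $1$. Hence $1-\beta\gg L(1,\chi)/\log^2q\gg_\varepsilon q^{-2\varepsilon}$. Replacing $\varepsilon$ by $\varepsilon/2$ gives the stated bound.

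The main obstacle is Step 2, specifically the uniform Taylor-expansion lower bound for $F(\sigma)$. It requires controlling $F(s)-\lambda/(s-1)$ on $|s-2|=3/2$ by $(qq_1)^{O(1)}$ via convexity bounds, and then truncating the series at the right length. The case split in Step 1 is what makes the result ineffective.
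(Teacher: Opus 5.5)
The paper does not prove this statement. It quotes Siegel's theorem as classical background with a citation to \cite{Si} and never uses it later, so there is no in-paper argument to compare yours against. Your outline is the standard Estermann-type proof, the one in Davenport's \emph{Multiplicative Number Theory}, Ch.~21, and it is correct in outline. Its main merit is that it shows exactly where the ineffectivity enters: the single pair $(q_1,\beta_1)$ fixed in the second case of Step~1.

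A few points should be tightened.

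\emph{First case of Step~1.} There is no zero of $F$ to evaluate at, so ``run the argument with such a $\chi_1$'' does not literally make sense. The zero-free statement is immediate in that case: every real zero is then $\le 1-\varepsilon/10$, so any $C(\varepsilon)<\varepsilon/10$ works. If you also want $L(1,\chi)\gg q^{-\varepsilon}$ there, you need one extra remark. For $\sigma\in(1-\varepsilon/10,1)$ one has $F(\sigma)<0$, because $\zeta(\sigma)<0$ while the three $L$-factors are positive on that interval.

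\emph{The threshold $\varepsilon/10$.} It must be matched to the exponent constant from Step~2, since you need $c(1-\beta_1)<\varepsilon$. Either track the constant, or simply require $1-\beta_1<\varepsilon/(2c)$. If you track it, partial summation gives $|L(s,\psi)|\ll k^{1/2}$ on $|s-2|=3/2$. Hence $|F|\ll qq_1$ there, and choosing $M\approx\log(qq_1)/\log(4/3)$ gives an exponent of at most $4$.

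\emph{Nonnegativity of the coefficients.} When $(q,q_1)>1$ the character $\chi\chi_1$ is imprimitive. Then $F$ is the Dedekind zeta function of the biquadratic field only up to finitely many Euler factors. It is cleaner to justify $a_n\ge0$ directly from
$$\log F(s)=\sum_{p,k}\frac{(1+\chi(p^k))(1+\chi_1(p^k))}{kp^{ks}}.$$

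\emph{Reduction to primitive $\chi$.} Record explicitly that an imprimitive $\chi$ has the same zeros in $(0,1)$ as its primitive inducing character, whose conductor is smaller. This is what justifies your reduction.
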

However, most zeroes are far closer to the half-line than these bounds indicate.  In fact, it is known (see \cite{Gr}, \cite{La}, \cite{Ti}) that for any $q$, every zero of $L(s,\chi)$ except at most one will obey a much smaller bound:
\begin{theorem}
There is an effectively computable positive constant $C$ such that $$\prod_{\chi\mbox{ }mod\mbox{ }q}L(s,\chi)=0$$
has at most one solution on the region $$\sigma\geq 1-\frac{C}{\log q(2+|t|)}.$$
If such a zero exists, $s$ must be real, and the character for which $L(s,\chi)=0$ must be a non-principal real character.
\end{theorem}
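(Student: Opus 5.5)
The plan is the classical de la Vallée Poussin argument, run on the product $\prod_\chi L(s,\chi)$ through the logarithmic derivative. There are two ingredients. The first is the nonnegativity of $3+4\cos\theta+\cos 2\theta = 2(1+\cos\theta)^2$. The second is a local bound for $-\mathrm{Re}\,\frac{L'}{L}(s,\chi)$ coming from the Hadamard product. For $\sigma>1$ we have
$$-\mathrm{Re}\frac{L'}{L}(\sigma+it,\chi)=\sum_n \frac{\Lambda(n)}{n^\sigma}\mathrm{Re}\big(\chi(n)n^{-it}\big).$$
For a primitive character (imprimitive ones differ only by finitely many Euler factors at primes dividing $q$, at a cost of $O(\log q)$), the Hadamard factorization gives
$$-\mathrm{Re}\frac{L'}{L}(s,\chi)\le c_1\log(q(2+|t|))-\sum_{\rho}\mathrm{Re}\frac{1}{s-\rho}.$$
Every term $\mathrm{Re}\frac{1}{s-\rho}$ is positive when $\sigma>1$, so we may discard any subset of the zeroes and keep only those we want.

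\textbf{Step 1: complex zeroes.} Suppose $\rho=\beta+i\gamma$ is a zero with $\gamma\ne0$. Apply the inequality
$$3\left(-\frac{\zeta'}{\zeta}(\sigma)\right)+4\left(-\mathrm{Re}\frac{L'}{L}(\sigma+i\gamma,\chi)\right)+\left(-\mathrm{Re}\frac{L'}{L}(\sigma+2i\gamma,\chi^2)\right)\ge0.$$
This gives
$$\frac{3}{\sigma-1}-\frac{4}{\sigma-\beta}+c\log(q(2+|\gamma|))\ge0.$$
Choosing $\sigma=1+\delta/\log(q(2+|\gamma|))$ yields $\beta<1-C/\log(q(2+|\gamma|))$. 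The only case needing care is $\chi^2$ principal, i.e. $\chi$ real. Then $L(s,\chi^2)$ has a pole at $1$, contributing $+\mathrm{Re}\frac{1}{\sigma-1+2i\gamma}$. This term is harmless when $|\gamma|\gg 1/\log q$, and for smaller $|\gamma|$ it is handled together with the conjugate zero $\bar\rho$ as in Step 2. So complex zeroes lie outside the region except for real $\chi$ with tiny $|\gamma|$.

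\textbf{Step 2: at most one exceptional zero overall.} Take $s=\sigma$ real and sum over all characters mod $q$:
$$\sum_\chi -\frac{L'}{L}(\sigma,\chi)=\phi(q)\sum_{n\equiv1\ (q)}\frac{\Lambda(n)}{n^\sigma}\ge0.$$
The principal character contributes at most $\frac{1}{\sigma-1}+O(\log q)$, and each character contributes $O(\log q)$ minus its zero terms. This bounds the count $\phi(q)\log q$ badly, so instead I would compare characters pairwise.

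\begin{itemize}
\item Suppose two distinct real characters $\chi_1,\chi_2$ have zeroes $\beta_1,\beta_2$ in the region. Use
$$\zeta(s)L(s,\chi_1)L(s,\chi_2)L(s,\chi_1\chi_2)=\sum_n a_n n^{-s},\qquad a_n\ge0,$$
noting that $\chi_1\chi_2$ is nonprincipal. This gives
$$\frac{1}{\sigma-1}-\frac{1}{\sigma-\beta_1}-\frac{1}{\sigma-\beta_2}+c\log q\ge0,$$
which fails for $\sigma=1+\delta/\log q$ if both $\beta_i>1-C/\log q$.
\item The same inequality, applied to the combination $\zeta(s)L(s,\chi)^2$ or to conjugate pairs, rules out a double real zero of one real $\chi$, and rules out a non-real pair $\rho,\bar\rho$ of a real $\chi$ with $|\gamma|$ tiny.
\end{itemize}

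\textbf{Step 3: the exceptional zero must be real and belong to a real character.} A nonreal $\chi$ cannot have a zero in the region, even a real one. In that case $\chi^2$ is nonprincipal, so Step 1 applies with $\gamma$ arbitrary, including $\gamma=0$, which is the case $3+4+1$.

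Tracking constants is routine. The main obstacle is Step 2, specifically the bookkeeping when $\chi$ is real and the zeroes are very close to the real axis, where the pole of $\zeta$ or of $L(s,\chi^2)$ competes. I would resolve this by taking $C$ small enough relative to the $c_1$ of the Hadamard bound, so that the quantity $\frac{1}{\sigma-1}$ is beaten by two zero terms each of size $\approx\frac{1}{\sigma-\beta}$.
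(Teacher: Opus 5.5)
The paper gives no proof of this statement; it quotes it as classical, citing Gronwall, Landau and Titchmarsh. Your outline is the standard textbook argument, and Step 1, Step 3 and the first bullet of Step 2 are sound.

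The second bullet of Step 2, however, uses an inadmissible combination. The coefficients of $-\frac{\zeta'}{\zeta}(s)-2\frac{L'}{L}(s,\chi)$ are $\Lambda(n)(1+2\chi(n))$, which equal $-\Lambda(n)$ whenever $\chi(n)=-1$. So $\zeta(s)L(s,\chi)^2$ yields no inequality at all. As a sanity check: if it did, a single simple real zero would give $\frac{1}{\sigma-1}-\frac{2}{\sigma-\beta}+c\log q\ge 0$. That fails at $\sigma=1+\delta/\log q$ once $1-\beta\le\delta/(2\log q)$ and $\delta$ is small, so Siegel zeros could not exist at all.

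The correct tool is $\zeta(s)L(s,\chi)$, whose negative logarithmic derivative has coefficients $\Lambda(n)(1+\chi(n))\ge 0$. Alternatively, use Landau's product with $\chi_1=\chi_2=\chi$, which is $\zeta(s)L(s,\chi)^2L(s,\chi_0)$ (not $\zeta(s)L(s,\chi)^2$) and has coefficients $\Lambda(n)(1+\chi(n))^2$ for $(n,q)=1$. At real $\sigma$ this gives $\frac{1}{\sigma-1}-\sum_{\rho'}\mathrm{Re}\frac{1}{\sigma-\rho'}+c\log q\ge 0$. That inequality rules out a second or double real zero of $L(s,\chi)$, and a pair $\rho,\bar\rho$ with $|\gamma|$ small compared with $\sigma-1$. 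Relatedly, in the first bullet what you actually need is nonnegativity of the coefficients $\Lambda(n)(1+\chi_1(n))(1+\chi_2(n))$ of the negative logarithmic derivative. Nonnegativity of the coefficients $a_n$ of the product itself does not give your inequality.

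The second issue is that the principal character is never treated, although the statement asserts that the exceptional character is non-principal. Your positivity inequalities cannot handle it near $s=1$. A real zero $\beta$ of $L(s,\chi_0)$ would enter $-\frac{\zeta'}{\zeta}(\sigma)\ge 0$ only through $\frac{1}{\sigma-1}-\frac{1}{\sigma-\beta}+O(1)\ge 0$, which is always true and gives no contradiction. You should add three facts:
$L(s,\chi_0)=\zeta(s)\prod_{p\mid q}(1-p^{-s})$; the Euler factors do not vanish for $\sigma>0$; and $\log(q(2+|t|))\ge\log(2+|t|)$. Together these put the region inside the classical zero-free region for $\zeta$. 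Near $s=1$, the proof of that region rests on $\zeta(s)-\frac{1}{s-1}$ being bounded there, not on positivity. With these two repairs your outline is a complete proof.
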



A zero of this type, if it is to exist, is called a \textit{Siegel zero} or an \textit{exceptional zero}, and the associated character is called an exceptional character. We note that the definition given here (or, indeed, in the literature in general) for a Siegel zero is not particularly rigorous, since this definition depends on the choice of the constant $C$.

\section{Siegel Zeroes}

While the existence of Siegel zeroes would unfortunately disprove the Riemann hypothesis, these zeroes would nevertheless lead to some surprisingly nice properties among the primes.  Most notably, the existence of Siegel zeroes would allow us to prove (among other things) the twin prime conjecture \cite{HB83}, small gaps between general $m$-tuples of primes \cite{WrS}, the existence of large intervals where the Goldbach conjecture is true \cite{MaMe}, a hybrid Chowla and Hardy-Littlewood conjecture \cite{TT}, and results about primes in arithmetic progressions that would allow the modulus $q$ to be greater than $\sqrt x$ \cite{FI03}.  It is this last result that is of interest in the present paper.

In the definitions below, we will assume that $(a,q)=1$.  We recall that Chebyshev's functions are given by
\begin{gather*}
\psi(x)=\sum_{n\leq x}\Lambda(n),\\
\psi(x,q,a)=\sum_{\substack{n\leq x\\n\equiv a\pmod q}}\Lambda(n),
\end{gather*}
where $\Lambda$ is the von Mangoldt function given by
$$\Lambda(n)=\begin{cases} \log p & if\mbox{ }n=p^k\mbox{ }for\mbox{ }prime\mbox{ }p,\\ 0 & otherwise.\end{cases}$$
In 2003, Friedlander and Iwaniec \cite{FI03} proved the following:

\begin{theorem}[Friedlander-Iwaniec, 2003] \label{FIFI}
Let $\chi$ be a real character mod $D$.  Let $x>D^r$ with $r=554,401$, let $q=x^\theta$ with $\theta<\frac{233}{462}$, and let $(a,q)=1$.  Then
$$\psi(x,q,a)=\frac{\psi(x)}{\phi(q)}\left(1-\chi\left(\frac{aD}{(q,D)}\right)+O\left(L(1,\chi)(\log x)^{r^r}\right)\right).$$
\end{theorem}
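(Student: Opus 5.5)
The plan follows the Friedlander--Iwaniec strategy: replace $\Lambda$ by a model function built from $\chi$, and show that the difference is small on average because $L(1,\chi)$ is small. Put $\lambda=1*\chi$, so $\lambda(n)=\sum_{d\mid n}\chi(d)\ge 0$. The identity behind the main term is
$$\Lambda = -\mu\log * 1 = -(\mu\chi * \mu\log)*\lambda + \ldots,$$
or more usefully
$$\Lambda*\lambda=\Lambda*1*\chi = \log*\chi .$$
One consequence is that for primes $p\le x$, $\lambda(p)=1+\chi(p)$, and this vanishes unless $\chi(p)\neq -1$. A Siegel zero forces $L(1,\chi)$ small, and then $\sum_{n\le x}\lambda(n)/n$ is small. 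Hence primes with $\chi(p)=1$ are rare, and almost all primes in the relevant range satisfy $\chi(p)=-1$.

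\textbf{Step 1 (the model).} I would write $\Lambda(n)=\Lambda(n)\bigl(1-\chi(n)\bigr)/2\cdot 2-\ldots$; more precisely,
$$\psi(x,q,a)=\sum_{\substack{n\le x\\ n\equiv a\ (q)}}\Lambda(n)\bigl(1-\chi(n)\bigr) \;+\; \sum_{\substack{n\le x\\ n\equiv a\ (q)}}\Lambda(n)\chi(n).$$
The plan is to show that
$$\sum_{\substack{n\le x\\ n\equiv a\ (q)}}\Lambda(n)\chi(n)=-\chi\Bigl(\tfrac{aD}{(q,D)}\Bigr)\frac{\psi(x)}{\phi(q)}(1+\text{small})$$
and that the first sum is $\frac{\psi(x)}{\phi(q)}(1+\text{small})$. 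Both reduce to evaluating $\sum \Lambda(n)\,\lambda$-weighted sums in progressions. On $n\equiv a \pmod q$, the value $\chi(n)$ is determined by $a$ only through the part of the congruence mod $(q,D)$; the factor $\chi(D/(q,D))$ comes from separating the moduli. Concretely, one factors $\chi=\chi_1\chi_2$ with $\chi_1$ mod $(q,D)$ and $\chi_2$ mod $D/(q,D)$, and $\chi_2$ is evaluated through reciprocity.

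\textbf{Step 2 (sieve decomposition).} Using the identity $\Lambda = \mu*\log$ together with the smallness of $\lambda$ on average, I would apply a Heath-Brown type identity or a combinatorial sieve that expresses $\Lambda(n)(1-\chi(n))$ restricted to $n\equiv a \pmod q$ as bilinear forms $\sum\alpha_m\beta_n$ plus Type I sums. Every piece containing a factor $\lambda$ convolved with a long smooth variable is bounded by $O(L(1,\chi)(\log x)^{C})$ times the main term. This is where the hypothesis $x>D^r$ enters: it makes $\chi$ equidistributed over intervals of length $x^{\epsilon}$ via Burgess/Pólya--Vinogradov, and it puts the conductor $D$ far below the level.

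\textbf{Step 3 (level beyond $\sqrt x$).} The remaining Type I and Type II sums have modulus $q=x^\theta$ with $\theta$ up to $233/462$. For these I would invoke the Friedlander--Iwaniec / Bombieri--Friedlander--Iwaniec machinery: Kloosterman-sum bounds coming from Deligne and Weil, and the dispersion method for the divisor function $\tau_3$ in progressions. The point is that, once $\lambda$-heavy terms are discarded, the remaining convolutions resemble $1*1*1$, which can be distributed to level $x^{1/2+1/230}$ for a single modulus. I expect this step to be the main obstacle. The exponent $233/462$ arises exactly from the Friedlander--Iwaniec $\tau_3$ level $x^{1/2+1/230}$ (up to bookkeeping), and it must be balanced against the losses in Step 2. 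The enormous exponent $(\log x)^{r^r}$ reflects the iterated divisor bounds from the sieve.
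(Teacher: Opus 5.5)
Your Step 1 does not reduce the problem, and its stated targets contradict the theorem. First, $\chi(aD/(q,D))=\mathds{1}_{D\mid q}\chi(a)$: if $D/(q,D)>1$ it shares a factor with $D$. So no splitting $\chi=\chi_1\chi_2$ or reciprocity is involved. The targets then fail in both cases:
\begin{itemize}
\item If $D\mid q$ and $\chi(a)=1$, then $1-\chi(n)=0$ on the whole progression. Your first sum is therefore identically $0$, not $\frac{\psi(x)}{\phi(q)}(1+\text{small})$.
\item If $D\nmid q$, your own remark that $\lambda(p)=1+\chi(p)$ is rarely nonzero gives $\sum_{n\equiv a}\Lambda(n)\chi(n)=-\psi(x,q,a)+(\text{small})\approx-\frac{\psi(x)}{\phi(q)}$, not $o(\psi(x)/\phi(q))$.
\end{itemize}
More importantly, $\sum_{n\equiv a\,(q)}\Lambda(n)\chi(n)$ is a combination of the $\psi(x;\mathrm{lcm}(q,D),b)$ and is exactly as hard as $\psi(x,q,a)$. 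All the work is pushed into Step 2.

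Step 2 is the genuine gap. A Heath-Brown or Vaughan decomposition of $\Lambda(n)(1-\chi(n))$ does not produce factors of $\lambda$. It produces Type II sums with M\"obius-type coefficients, or pieces behaving like $\tau_k$ with $k\ge 4$ in progressions. For a single modulus $q>x^{1/2}$ no estimate is known for either; that is exactly the barrier the Siegel zero must circumvent, and the smallness of $L(1,\chi)$ says nothing about such pieces. So ``every piece containing a factor $\lambda$ is small'' is vacuous. Even when such a piece exists, small $\sum\lambda(d)/d$ only controls its expected value; bounding the piece itself mod $q$ still needs level beyond $\sqrt x$.

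The missing idea is the Friedlander--Iwaniec factorization. Inserting $\chi*\mu\chi=\delta$ gives $\Lambda=\nu'*\lambda'$ with $\nu'=\mu*\mu\chi$ and $\lambda'=\chi*\log$, together with the pointwise bound $|\nu'|\le\lambda$. The argument then splits at $D^*$:
\begin{itemize}
\item For $d\le D^*$, the sum is evaluated asymptotically through $\lambda'$, a twisted binary divisor function distributed to level $x^{2/3}$ (contour integration plus Weil's bound). This is where $1-\mathds{1}_{D\mid q}\chi(a)$ comes from.
\item For $d>D^*$, the sum is only bounded, by the nonnegative majorant $\lambda*\lambda'\le\log\cdot(\lambda*1*1)$. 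Landreau's inequality reduces this, at the cost of the $(\log x)^{r^r}$, to $\sum_{dm\le x,\,d>D^*,\,dm\equiv a}\lambda(d)$.
\item That $\chi*1*1$ sum is equidistributed to level $x^{58/115}$ by the 1985 Kloosterman bound, with a loss of $D$ (one reason for $x>D^r$). Its expected value is small because $\sum_{D^2<d\le x}\lambda(d)/d\ll L(1,\chi)\log x$.
\end{itemize}
You are right that $58/115=\frac12+\frac1{230}$ is the bottleneck. But the ternary sum is the $\lambda$-heavy piece itself, not what remains after discarding it as in your Step 3.
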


Notably, this allows for moduli $q$ that are larger than $x^{\frac 12}$.  In fact, their work actually proves this theorem for the slightly larger region of $$\theta<\frac{58\left(1-\frac 1r\right)}{115}$$
for some very large $r$.  The authors rounded off to an exponent of $\frac{233}{462}$ for the purpose of writing their result more simply.

In this paper, we increase further the allowable size for $q$.  A previous unpublished version of this paper also reduced the error term to 
$$O\left(L(1,\chi)(\log x)^{7}\right).$$
More recently, Sachpazis \cite{Sa} reduced the requirement of the Siegel zero further, albeit with the same $q$ as in the Friedlander-Iwaniec paper.  We state the results of that paper here.  

\begin{theorem}[Sachpazis] \label{SachThm}
Let $x\geq 2,\nu\in(0,1/100)$, and consider positive integers $a,q$ and $D$ such that $(a,q)=1,q\leq x^{58/115-\nu},$ and $x=D^V$ for some $V\geq 200/\nu$. Let also $\chi$ be a quadratic primitive character $\mod{D}$ and assume that $\eta_0=\eta_0(\nu)>0$ is a sufficiently large real number in terms of $\nu$. If for some $\eta\geq \eta_0$, there exists a real number $\beta=1-1/(\eta\log D)$ such that $L(\beta,\chi)=0$, then
\begin{align}\label{thmas}
\psi(x;q,a)=\frac{\psi(x)}{\phi(q)}\bigg\{1-\mathds{1}_{D\mid q}\chi(a)+O_{\nu}\bigg(\frac{V^{16}}{\eta}+\exp\Big(-C_{\nu}\sqrt{V\log\eta}\Big)\bigg)\bigg\},
\end{align}
where $C_{\nu}$ is a positive constant that depends on $\nu$.
\end{theorem}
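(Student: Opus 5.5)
We follow the Friedlander--Iwaniec strategy, keeping track of how the error depends on $V$ and $\eta$ rather than on $L(1,\chi)$. The basic device is the convolution $\lambda=1*\chi$, which is nonnegative. When $L(s,\chi)$ has a zero at $\beta=1-1/(\eta\log D)$ with $\eta$ large, $\lambda$ is very small on average over $n\le x$ for $x=D^V$. Concretely, we aim to show
$$\sum_{n\le x}\frac{\lambda(n)}{n}\ll \frac{V^{c}}{\eta}\log x$$
by comparing with $\sum\lambda(n)n^{-\beta}$ and using $L(\beta,\chi)=0$. This says that $\chi(p)=-1$ for almost all primes $p$ in the range $[D^{1/\eta_0},x]$, measured against $1/p$. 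The identity $\Lambda=\mu*\log$ can then be rewritten as
$$\Lambda=\Lambda'+(\text{terms involving }\lambda),\qquad \Lambda'(n)=-\sum_{dm=n}\mu(d)\chi(d)\ldots,$$
that is, $\Lambda\approx -\,(\mu\chi)*(\chi\log)+\ldots$. Summed over $n\equiv a \pmod q$, the main term is $\frac{\psi(x)}{\phi(q)}(1-\mathds 1_{D\mid q}\chi(a))$. The correction $-\mathds 1_{D\mid q}\chi(a)$ arises because $\chi$ restricted to the progression is constant precisely when $D\mid q$; if $D\nmid q$, then $\chi$ averages out along the progression.

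\textbf{Step 1 (sieve reduction).} Write $\psi(x;q,a)$ through Heath-Brown's identity or a Vaughan-type identity with $\mu$ replaced by $\mu\lambda$-twisted pieces. The contribution of integers with a prime factor $p$ satisfying $\chi(p)=1$ in the range $(D^{\epsilon},x]$ will be bounded by the $\lambda$-smallness above. We bound it using a Brun--Titchmarsh inequality valid for $q\le x^{1-\delta}$, which costs a factor $V^{16}$ coming from the divisor-type weights. What remains are sums of the shape
$$\sum_{\substack{m\ell\le x\\ m\ell\equiv a\ (q)}}\alpha_m\,(1*\chi)(\ell),$$
with smooth (divisor-like) $\ell$-variables of length up to $x/M$.

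\textbf{Step 2 (equidistribution of $1*\chi$ in progressions to large moduli).} This is the heart of the argument. We need an asymptotic for
$$\sum_{\ell\le y,\ \ell\equiv b\ (q)}(1*\chi)(\ell)$$
with $q$ as large as $y^{\theta'}$, uniformly with type-I weights $\alpha_m$. We write $(1*\chi)(\ell)=\sum_{\ell=uv}\chi(v)$ and separate variables. Summing over $u$ in progressions by Poisson summation turns the problem into incomplete Kloosterman-type sums twisted by $\chi \pmod D$. Because $x=D^V$ with $V\ge 200/\nu$, the modulus $D$ is tiny and $\chi$ is periodic mod $D$, so we can split $v$ into classes mod $D$ and reduce to the divisor function $d(n)$ in progressions mod $qD$ with one twisted coefficient. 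We then invoke the Friedlander--Iwaniec bound for $d_3$-type / divisor sums in progressions beyond $\sqrt x$. This bound rests on Deligne's bound for hyper-Kloosterman sums and the Bombieri--Friedlander--Iwaniec dispersion method, and it gives level $q\le x^{58/115-\nu}$ for each fixed residue $a$. The saving of $x^{-\nu/2}$ absorbs the loss $D^{O(1)}=x^{O(1/V)}$, which is exactly why we need $V\ge 200/\nu$.

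\textbf{Step 3 (assembling).} The main terms from Step 2 are recombined; via $L(1,\chi)$-type identities they reproduce $\psi(x)(1-\mathds 1_{D\mid q}\chi(a))/\phi(q)$. The error terms give $V^{16}/\eta$ from Step 1. Small primes $p\le D^{O(1/\log\eta)}$, where $\lambda$-smallness is unavailable, are handled by a fundamental lemma of sieve theory, giving the error $\exp(-C_\nu\sqrt{V\log\eta})$.

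The main obstacle is Step 2: obtaining the level $58/115$ uniformly in a single residue class with the character twist. Our first attempt there is to reduce to the Friedlander--Iwaniec treatment of $d_3$ in progressions, carrying the dependence on $D$ explicitly through the Kloosterman-sum estimates.
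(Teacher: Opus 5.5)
Your bookkeeping $D^{O(1)}=x^{O(1/V)}$ and your heuristic for the factor $1-\mathds{1}_{D\mid q}\chi(a)$ are fine. The problem is that the plan puts the difficulty in the wrong place: the step you treat as routine is the one that fails. The paper does not prove this theorem; it quotes it from Sachpazis and sketches it via \eqref{tocon} and the sum $\mathcal C$. There one restricts to $z$-rough $n$, with $z=D^{\min\{\sqrt{V/\log\eta},2\}}$, and uses $\Lambda(n)=\lambda'(n)-\sum_{k\ell=n,\,k>1}\lambda(k)\Lambda(\ell)$. The term that must be shown small in the single class $a$ mod $q$ is
\[
\sum_{\substack{k\ell\le x,\ k>z\\ k\ell\equiv a\ (\mathrm{mod}\ q)\\ P^-(k\ell)>z}}\lambda(k)\Lambda(\ell).
\]
Your Step 1 bounds this kind of contribution by Brun--Titchmarsh. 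An upper-bound sieve in a progression mod $q$ cannot see the Siegel-zero information, which only says that $\lambda$ is small on average over all integers. For example, Brun--Titchmarsh bounds $\sum_{p\le x,\,p\equiv a\,(\mathrm{mod}\,q),\,\chi(p)=1}\log p$ only by $O(x/\phi(q))$, with no saving. In the bilinear sum above, Brun--Titchmarsh in $\ell$ needs $k\le x/q^{1+\delta}$, and the level-$2/3$ asymptotic for $\lambda$ in progressions needs $\ell\le x/q^{3/2+\delta}$. For $q$ near $\sqrt x$, the pairs with $x/q^{3/2+\delta}<\ell<q^{1+\delta}$ are covered by neither. If Step 1 worked as stated for all $q\le x^{1-\delta}$, you would get the theorem for $q\le x^{2/3-\nu}$, far beyond anything known; this paper only reaches $30/59$. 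What is required here is an asymptotic, not an upper bound. One majorizes $\Lambda(\ell)\le\log x$ and replaces the roughness conditions by sieve weights supported on $d\le x^{\varepsilon/200}$. One must then prove that $\lambda\ast 1=1\ast1\ast\chi$ is equidistributed in residue classes mod $q$ (this is $\mathcal C$). Only then is the main term $\phi(q)^{-1}\sum\lambda(k)\cdots$ small by the Siegel zero. That ternary equidistribution, supplied by Theorem \ref{FIternary2}, is exactly what caps $q$ at $x^{58/115-\nu}$.

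Conversely, Step 2 locates the $58/115$ barrier in a sum that does not have it. The binary sum $\sum_{\ell\le y,\,\ell\equiv b\,(\mathrm{mod}\,q)}(1\ast\chi)(\ell)$ is all the $\lambda'$-part needs, and it already has level $2/3$ by Weil's bound (Theorem \ref{binaryTdivisor}); no $d_3$ technology enters. The Friedlander--Iwaniec trilinear Kloosterman bound applies to $\sum_{uv\ell\equiv b}\chi(v)$ with all three variables smooth, the twist costing a factor $D$. It does not survive arbitrary type-I coefficients $\alpha_m$ on a long range, so ``uniformly with type-I weights'' is available only for the tiny sieve moduli.

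Three further points:
\begin{enumerate}[(a)]
\item Avoid Heath-Brown or Vaughan identities. They create Type II sums, which cannot be handled beyond $\sqrt x$ for an individual modulus. The Siegel-zero method works precisely because $\lambda'=\lambda\ast\Lambda$ involves only divisor-like functions.
\item Your displayed $\Lambda\approx-(\mu\chi)\ast(\chi\log)$ is just $\Lambda\approx-\chi\Lambda$, since $\chi$ is completely multiplicative, so it gives no usable decomposition.
\item The bound $\sum_{n\le x}\lambda(n)/n\ll V^{c}\eta^{-1}\log x$ is false as written. The left side is at least $\lambda(1)=1$, and $\lambda$ need not be small on $n\le D^2$. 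The valid inputs are $\sum_{D^2<n\le x}\lambda(n)/n\ll L(1,\chi)\log x$ or, to avoid losing powers of $\log D$, its analogue over $z$-rough $n>1$. That is the reason for the restriction $P^-(n)>z$. The choice of $z$, with $\log x/\log z\ge\sqrt{V\log\eta}$, is what produces the fundamental-lemma error $\exp(-C_\nu\sqrt{V\log\eta})$.
\end{enumerate}
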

Noting that $V=\log x/\log D$, we can write the error term as
$$O_\nu\left(\frac{(1-\beta)(\log x)^{16}}{(\log D)^{15}}+\exp\Big(-C_{\nu}\sqrt{V\log\eta}\Big)\right).$$
This leads to the following corollary:

\begin{corollary}\label{SachCor}
Under the same considerations and notation as in Theorem \ref{SachThm}, we have the following:
\begin{enumerate}[(a)]
    \item There exists a positive constant $C'_{\nu}$ depending on $\nu$ such that
\begin{align*}
\psi(x;q,a)=\frac{\psi(x)}{\phi(q)}\bigg\{1-\mathds{1}_{D\mid q}\chi(a)+O_{\nu}\bigg(\!\exp\Big(-C_{\nu}'\sqrt{V\log\eta}\Big)\bigg)\bigg\},
\end{align*}
     for every $x\leq D^{C_{\nu}^{-2}\log\eta}$.
     \item For every $\delta\in(0,1)$, we have
\begin{eqnarray*} 
\psi(x;q,a)=\frac{\psi(x)}{\phi(q)}\bigg\{1-\mathds{1}_{D\mid q}\chi(a)+O_{\nu}\bigg(\frac{1}{\eta^{1-\delta}}\bigg)\bigg\}
\end{eqnarray*}
      for all $x \in (D^{{C_{\nu}}^{-2}\log{\eta}},D^{\eta^{\delta/16}}]$.
\end{enumerate}
\end{corollary}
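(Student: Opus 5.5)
The plan is to derive both parts directly from Theorem \ref{SachThm} by comparing the two error terms $V^{16}/\eta$ and $\exp(-C_\nu\sqrt{V\log\eta})$ on the two ranges of $x$. Write $x=D^V$, so $V=\log x/\log D$. The hypotheses $V\ge 200/\nu$, $q\le x^{58/115-\nu}$ and $\eta\ge\eta_0(\nu)$ are unchanged, so the main term $1-\mathds{1}_{D\mid q}\chi(a)$ is already given. In each part we only need to show that one error term dominates the other, up to adjusting constants.

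For part (a), take $x\le D^{C_\nu^{-2}\log\eta}$, that is, $V\le C_\nu^{-2}\log\eta$, which gives $\log\eta\ge C_\nu^2 V$. We bound
$$\frac{V^{16}}{\eta}=V^{16}\exp(-\log\eta)$$
against $\exp(-C'_\nu\sqrt{V\log\eta})$. We have
$$\sqrt{V\log\eta}\le\sqrt{C_\nu^{-2}(\log\eta)^2}=C_\nu^{-1}\log\eta,$$
so $\log\eta\ge C_\nu\sqrt{V\log\eta}$. Hence
$$\frac{V^{16}}{\eta}\le V^{16}\exp\bigl(-C_\nu\sqrt{V\log\eta}\bigr).$$
The factor $V^{16}$ must be absorbed into the exponential. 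Since $\sqrt{V\log\eta}\ge\sqrt V$, we have
$$16\log V\le\tfrac{C_\nu}{2}\sqrt V\le\tfrac{C_\nu}{2}\sqrt{V\log\eta}$$
once $V$ is large in terms of $\nu$. This holds because $V\ge 200/\nu$, and we may enlarge $\eta_0(\nu)$ if needed. Then $V^{16}/\eta\le\exp(-\tfrac{C_\nu}{2}\sqrt{V\log\eta})$, and setting $C'_\nu=\min(C_\nu,C_\nu/2)=C_\nu/2$ absorbs both error terms. This absorption of the polynomial factor $V^{16}$ is the only delicate point. If $200/\nu$ is not large enough compared with $C_\nu^{-2}$, we would instead let the $O_\nu$ constant cover the bounded range of $V$ where it fails, since there $V^{16}\ll_\nu 1$.

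For part (b), take $D^{C_\nu^{-2}\log\eta}<x\le D^{\eta^{\delta/16}}$, so $V\le\eta^{\delta/16}$ and $V^{16}/\eta\le\eta^{\delta-1}$. Also $V>C_\nu^{-2}\log\eta$ gives
$$C_\nu\sqrt{V\log\eta}>\log\eta,$$
so the exponential error is at most $\eta^{-1}\le\eta^{-(1-\delta)}$. Both errors are therefore $O(\eta^{-(1-\delta)})$, which completes the argument.
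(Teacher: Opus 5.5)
Your proposal is correct and is essentially the derivation the paper has in mind. The paper gives no separate proof; it obtains the corollary from Theorem \ref{SachThm} by writing $V=\log x/\log D$ and checking, on each range of $x$, which of $V^{16}/\eta$ and $\exp(-C_\nu\sqrt{V\log\eta})$ dominates, exactly as you do.

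The one loose spot is that $V\ge 200/\nu$ does not by itself give $16\log V\le \tfrac{C_\nu}{2}\sqrt V$, and enlarging $\eta_0$ does not help that particular inequality as you wrote it. Your fallback of absorbing the bounded range of $V$ into the $O_\nu$-constant does close the gap. Alternatively, use $\sqrt{V\log\eta}\ge\sqrt{V\log\eta_0}$ together with $\log V/\sqrt V\le 2/e$ and take $\eta_0(\nu)$ large enough.
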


Since Sachpazis' paper has established the blueprint for how to reduce the requirements of the Siegel zero to 
$1-\beta\ll \log x$, we alter our previous unpublished paper to follow their blueprint in improving the bounds for $q$.


\section{Main Theorems}
In this paper, we increase the allowable range of $q$ in the following result.
\begin{theorem}\label{MainTheoremi}
Let $x$ be large, $\nu\in(0,1/100)$, $q\leq x^{30/59-\nu},$ and $x=D^V$ for some $V\geq 200/\nu$ as before. Let $\chi$ again be a primitive character mod $D$ such that $L(s,\chi)$ has an exceptional zero at $\beta=1-1/(\eta\log D)$.  Assume again that   $\eta_0=\eta_0(\nu)>0$ is a sufficiently large real number in terms of $\nu$. Then
\begin{align}
\psi(x;q,a)=\frac{\psi(x)}{\phi(q)}\bigg\{1-\mathds{1}_{D\mid q}\chi(a)+O_{\nu}\bigg(\frac{V^{16}}{\eta}+\exp\Big(-C_{\nu}\sqrt{V\log\eta}\Big)\bigg)\bigg\},
\end{align}
where $C_{\nu}$ is a positive constant that depends on $\nu$.
\end{theorem}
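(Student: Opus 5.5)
We follow the Friedlander--Iwaniec architecture as streamlined by Sachpazis, and change only the step that fixes the exponent $58/115$. Write $\lambda = 1 * \chi$, which is nonnegative and, in the presence of a Siegel zero, very sparse on primes. The basic identity is
$$\Lambda = \Lambda * \ldots$$
or, more precisely, the decomposition $\Lambda(n)(1+\chi(n)) \approx (\lambda * \Lambda')$-type convolutions. Expanding $\Lambda$ by a Heath-Brown-type identity, we replace every occurrence of $\mu$ by $\mu\chi * (\text{something supported on } \lambda)$, using $1 = \lambda * \mu\chi$. After this step, $\psi(x;q,a) - \frac{\psi(x)}{\phi(q)}(1-\mathds{1}_{D\mid q}\chi(a))$ equals a main term plus two kinds of error.

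The first kind of error consists of terms weighted by $\lambda(n)$ for $n$ in a range $n > D^{c}$. These are small because $\sum_{n\le y}\lambda(n)/n \ll L(1,\chi)\log y$, together with the Sachpazis estimate that converts $L(1,\chi)$ smallness into the factor $V^{16}/\eta + \exp(-C_\nu\sqrt{V\log\eta})$. This part we import verbatim from the proof of Theorem~\ref{SachThm}, since it does not depend on the size of $q$ beyond $q\le x^{1-\epsilon}$.

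The second kind of error consists of sums over $n\equiv a \pmod q$ of convolutions $\alpha * \beta * \gamma$, where one factor is smooth (built from $1$ or $\chi$) and lives on a short range. Sachpazis handles these exactly as Friedlander--Iwaniec do. The constraint $q<x^{58/115}$ comes solely from the equidistribution of triple divisor-type sums $\sum \chi(m_1) \cdot 1 \cdot \beta$ in progressions to moduli beyond $\sqrt{x}$. That estimate rests on Burgess bounds for the $\chi$-factor combined with Weil/Deligne bounds for Kloosterman-type sums after Poisson summation.

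Our plan is to redo this step with the sharper input: Heath-Brown's/Friedlander--Iwaniec's result on $d_3$ in arithmetic progressions to moduli up to $x^{1/2+1/230}$-type, replaced by the level for the twisted ternary function. In the pieces where $\chi$ occupies a variable of length $N_1$, we then balance two routes. One treats the pair of smooth variables via Poisson summation and Deligne's bound on hyper-Kloosterman sums. The other uses the Burgess-type bound $\sum_{n\le N}\chi(n)\ll N^{1-1/r}D^{(r+1)/(4r^2)}$, which is admissible since $x=D^V$ with $V\ge 200/\nu$ makes $D$ a tiny power of $x$. This effectively renders $\chi$ as harmless as $1$ up to a loss $x^{O(\nu)}$.

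Optimizing the ranges of $N_1,N_2,N_3$ under $N_1N_2N_3\approx x$, with the type I/II dichotomy from Heath-Brown's identity, we expect the worst case to produce the linear constraint yielding $\theta<30/59$ in place of $58/115$. Indeed $30/59 = 1/2+1/118$, versus $1/2+1/230$. The case analysis is then: type I sums with long smooth variable, which are trivial by completing; type II bilinear sums with both variables in $[x^{\theta-1/2+\nu}, x^{1-\theta-\nu}]$; and a residual ternary range. The residual range is where the new treatment goes, and there we would try first to use the Friedlander--Iwaniec ternary divisor estimate at level $x^{1/2+1/118}$ on average over one variable.

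The main obstacle is precisely this residual ternary range. The FI bound for $d_3$-sums gives level $x^{1/2+1/230}$, and to reach $30/59$ we must exploit that the outer variable is averaged, or else use a stronger bilinear Kloosterman estimate of Deshouillers--Iwaniec/Bettin--Chandee type. If that improvement fails, the fallback is to shrink the ternary range by extending the type II range using the sparsity of $\lambda$. Finally, the additive error terms are collected, with all savings $x^{-\delta(\nu)}$ dominated by the Siegel-zero error.
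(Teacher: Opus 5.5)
You correctly locate the bottleneck. In Sachpazis' decomposition every piece is admissible for $q\le x^{2/3-\varepsilon}$ except the term $\mathcal C$. That term needs equidistribution modulo $q$ of the twisted ternary sum $\sum_{uvm\le y,\ uvm\equiv b \bmod q}\chi(v)$, and only this estimate has to be pushed from $58/115$ to $30/59$. But your proposal never proves that estimate. Your expectation that the optimization yields $\theta<30/59$ is read off from the answer, and none of the candidate tools you list supplies it for an individual modulus:
\begin{itemize}
\item There is no Friedlander--Iwaniec ternary estimate at level $x^{1/2+1/118}$ to invoke, and you do not say what averaging over one variable would gain.
\item Deshouillers--Iwaniec savings require averaging over the modulus.
\item Bettin--Chandee treats $e(a\bar m/n)$ with a varying modulus.
\item A Heath-Brown-identity type I/II split is exactly what fails here. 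No bilinear estimate with general coefficients is available (or even true in general) for a single $q>\sqrt x$ when both variables are shorter than $q$. This is why the Siegel-zero method funnels everything into divisor-type convolutions.
\end{itemize}
Nor is Burgess the mechanism for $\chi$. It is carried as a $1$-bounded weight, or the completed variable is split into classes mod $D$, at the cost of a power of $D=x^{1/V}$.

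The missing ingredient is a concrete bound for the trilinear Kloosterman sums that arise when the two shorter variables are small. First discard boxes where some variable exceeds $Dq^{1+\alpha}$ (trivial) or lies below $x/q^{3/2+\alpha}$ (the other two then form a binary divisor problem of length $>q^{3/2+\alpha}$). This leaves $x/q^{3/2+\alpha}\le M_3\le M_2\le M_1\le Dq^{1+\alpha}$. Completing in $t_1$ with $H=Dq^{1+\alpha}M_2M_3/x$ leaves sums $\sum_{h\le H}\sum_{t_2\sim M_2}\sum_{t_3\sim M_3}g_1(h)g_2(t_2)g_3(t_3)e(ah\bar t_2\bar t_3/q)$ with $1$-bounded weights. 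The paper bounds these in two ways:
\begin{itemize}
\item Theorem~\ref{FI:2.6} gives $|\mathcal D_{\mathcal M,f}|\ll D^3x^{\varepsilon}q^{1/4+\alpha}(x/M_2)^{1/2}$, which is good for large $M_2$.
\item Shparlinski's Theorem~\ref{Shp:T1}, $K_g\ll (HM+(HM)^{3/4}q^{1/4})(N^{7/8}q^{-1/8}+N^{1/2})q^{o(1)}$, gives $|\mathcal D_{\mathcal M,f}|\ll D^3x^{\varepsilon}q^{3\alpha}M_2^{9/8}q^{-1/8}x^{1/4}$ for $M_2\le x^{1/3}$.
\end{itemize}
Splitting at $M_2=q^{3/13}x^{2/13}$, both are $\ll D^3x^{\varepsilon}q^{7/52+3\alpha}x^{22/52}$. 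This is $\ll x/q^{1+\alpha}$ exactly when $q^{59}<x^{30}$, with the $x^{O(\alpha)}$ and $D^{O(1)}$ losses absorbed by $q\le x^{30/59-\nu}$ and $V\ge 200/\nu$. The result is Theorem~\ref{MainSubThm1}, which is then plugged into Sachpazis' Lemma 3.2 in place of the $58/115$ estimate. Without Shparlinski's bound, or an equivalent new input in the small-$M_2$ regime, your plan does not get past $58/115$.
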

We can increase the allowable range of $q$ even further if we are willing to settle for an Elliott-Halberstam-type result.  As is standard, we let $n\sim x$ denote $x\leq n\leq 2x$.
\begin{theorem}\label{MainTheoremii}
Let $\chi$, $D$, $\nu$, and $x$ be as in Theorem \ref{MainTheoremi}, and let $Q<x^{\frac{16}{31}-\nu}$.  Then
$$\sum_{q\sim Q}\max_{(a,q)=1}\left|\psi(x,q,a)-\left(1-\chi\left(\frac{aD}{(q,D)}\right)\right)\frac{\psi(x)}{\phi(q)}\right|\ll \frac{xV^{16}}{\eta}+\exp\Big(-C_{\nu}\sqrt{V\log\eta}\Big)\bigg).$$
\end{theorem}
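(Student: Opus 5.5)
The plan is to rerun the argument of Theorem \ref{MainTheoremi}, which itself follows the blueprint of Theorem \ref{SachThm} and of \cite{FI03}, keeping the outer sum over $q\sim Q$ intact all the way down. The arithmetic input is the same. The Siegel zero at $\beta=1-1/(\eta\log D)$ with $\eta\ge\eta_0(\nu)$ forces $\chi(p)=-1$ for almost all primes $p\in[D^{c},x]$. The function $\lambda=1*\chi$ is therefore very sparse there, and $\Lambda$ is well approximated by $\Lambda\cdot(1-\chi)$ plus terms weighted by $\lambda$. Concretely, I would use the same combinatorial decomposition as in the individual-$q$ case, namely the identity $\Lambda*\lambda=\log\cdot(\mu*\lambda)*\ldots$ together with a Heath-Brown identity truncated at $D^{c}$. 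This writes $\psi(x;q,a)$ as:
\begin{enumerate}[(i)]
\item the main term $\frac{\psi(x)}{\phi(q)}\bigl(1-\chi\bigl(\tfrac{aD}{(q,D)}\bigr)\bigr)$;
\item terms controlled by the sparsity of $\lambda$, which are bounded exactly as in Sachpazis' treatment by $O_\nu\bigl(V^{16}/\eta+\exp(-C_\nu\sqrt{V\log\eta})\bigr)\psi(x)/\phi(q)$;
\item convolution sums of type $\alpha*\lambda*1$ or $\alpha*\beta$, with one smooth or divisor-like long variable, whose equidistribution in residue classes mod $q$ must be established.
\end{enumerate}
Pieces (i) and (ii) are pointwise in $q$. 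Summing (ii) over $q\sim Q$ with $\sum_{q\sim Q}1/\phi(q)\ll1$ gives the contribution $xV^{16}/\eta$ plus the $\exp$ term. So the whole task is to bound the discrepancies of the sums in (iii), summed over $q\sim Q$ with $\max_a$.

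For (iii), the individual-$q$ proof at level $30/59$ relies on distributing a ternary-divisor-type function ($1*1*\chi$ and its twists) in progressions. There one applies Poisson summation in the smooth variables and bounds the resulting incomplete Kloosterman sums by Weil's bound, for a single modulus, following \cite{FI03} with Heath-Brown's refinement. For the averaged statement I would replace this step by bounds that exploit the $q$-average:
\begin{enumerate}[(a)]
\item For type I sums, after Poisson the dual sums are sums of Kloosterman fractions $e(a\bar n/q)$. Summing over $q\sim Q$, I would use the Deshouillers--Iwaniec / Bombieri--Friedlander--Iwaniec bounds for bilinear forms in Kloosterman fractions, in the form of Fouvry-type averaged estimates with $\max_a$ retained. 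This yields a power saving over the trivial bound as long as $Q$ stays below a threshold determined by the length of the smooth variable.
\item For type II sums, I would apply the large sieve for multiplicative characters (Bombieri--Vinogradov style) in the range where both variables exceed $Q^{\epsilon}x^{\epsilon}$ in a suitable sense. I would use Cauchy--Schwarz plus the dispersion method when one variable is near $x/Q$.
\end{enumerate}
Since $\max_{(a,q)=1}$ is inside the sum, the Kloosterman step must not rely on averaging over $a$. I would handle this by fixing an arbitrary $a_q$ for each $q$ and absorbing it into the coefficients of the bilinear form, which the Kloosterman-fraction bounds permit.

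The final step is the exponent bookkeeping. I would parametrize the ranges from the Heath-Brown decomposition by $x^{\alpha_i}$ and partition the simplex of $(\alpha_i)$ into three regions:
\begin{enumerate}[(1)]
\item a long smooth factor, handled by (a), which requires roughly $Q^{a}N^{-b}<x^{1-\nu}$ for the exponents produced by the averaged Kloosterman bound;
\item two balanced factors, handled by (b);
\item a factor lying in the sparse-$\lambda$ range, sent back to (ii).
\end{enumerate}
Optimizing the break points, the critical configuration should be the one where three variables of comparable size meet the type I and type II constraints simultaneously. I expect solving these linear constraints to give $\theta<16/31$, in the same way the single-modulus version gave $30/59$. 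This balancing, and verifying that the averaged Kloosterman-fraction bound really has enough strength with $\max_a$ inside, is the main obstacle. I would first test it on the extremal configuration $\alpha_1=\alpha_2=\alpha_3$ and then check that every other region has slack.
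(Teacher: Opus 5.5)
There is a genuine gap, and it sits exactly where the level $16/31>1/2$ has to be earned.

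\textbf{The gap.} Your decomposition, a Heath-Brown identity producing pieces $\alpha*\beta$, creates genuine Type II sums. Neither tool you assign to them works beyond $x^{1/2}$ with $\max_{(a,q)=1}$ inside the sum over $q$.
\begin{enumerate}[(1)]
\item The multiplicative large sieve behind Bombieri--Vinogradov is intrinsically capped at $Q\le x^{1/2}$.
\item The dispersion method past $x^{1/2}$ (Bombieri--Friedlander--Iwaniec) needs a fixed residue class and well-factorable weights. With $\max_a$ inside, handling general bilinear forms at level $16/31$ is essentially the Elliott--Halberstam problem.
\end{enumerate}
Your step (a) fails for a related reason. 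In the Deshouillers--Iwaniec/BFI bounds for Kloosterman fractions, the coefficients may depend on the numerator and the non-modulus variables, but not on the modulus, which must carry a smooth weight. The saving comes from spectral theory of Kloosterman sums averaged over the modulus. An arbitrary map $q\mapsto a_q$ therefore cannot be ``absorbed into the coefficients''. Finally, $16/31$ is only asserted (``I expect''), not derived from any concrete bound.

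\textbf{What the paper does instead.} It avoids Type II sums altogether.
\begin{enumerate}[(1)]
\item In Sachpazis' identity (2.7), every term except $\mathcal C$ is bounded pointwise in $q$ by $\frac{x}{\phi(q)}\bigl(V^{16}/\eta+\exp(-c\sqrt{V\log\eta})\bigr)$. The term $\mathcal C$ is a $\chi$-twisted ternary divisor problem $\sum_{uvm\le y,\ uvm\equiv b \bmod q}\chi(v)$ with only tiny outer weights $d_1d_2d_3\le x^{\varepsilon/200}$. Reducing to this shape is the whole role of the Siegel zero.
\item One discards ranges where a variable exceeds $Dq^{1+\alpha}$ (trivial) or lies below $x/q^{3/2+\alpha}$ (Selberg--Hooley on the other two variables). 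Completing in the longest variable then leaves trilinear sums $\sum_h\sum_{t_2}\sum_{t_3}g_1(h)g_2(t_2)g_3(t_3)e(ah\overline{t_2t_3}/q)$ with $1$-bounded coefficients. A $\chi$-twist costs only a factor $D$.
\item If $M_2>Q^{2/7}x^{1/7}$, Lemma \ref{G:FI} (from Theorem \ref{FI:2.6}) gives $q^{1/4}(x/M_2)^{1/2}\ll Q^{3/28}x^{3/7}$.
\item If $M_2\le Q^{2/7}x^{1/7}$, Theorem \ref{Shp:T2} gives $M_2^{5/4}x^{1/4}Q^{-1/4}\le Q^{3/28}x^{3/7}$. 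Both bounds hold up to factors $D^3x^{\varepsilon}Q^{O(\alpha)}$.
\item Theorem \ref{Shp:T2} is uniform in $a$, so $\max_a$ costs nothing. It holds only outside an exceptional set of $Q^{1-4\kappa+o(1)}$ moduli, and those are bounded trivially by Shiu's theorem ($\ll x\log^3x/q$ each).
\item Balancing $Q^{3/28}x^{3/7}$ against $x/Q$ gives exactly $Q<x^{16/31}$. The same split with Theorem \ref{Shp:T1} gives $30/59$ for individual $q$.
\end{enumerate}
What your plan is missing is this ``all but few moduli, uniformly in $a$'' trilinear Kloosterman input, together with the trivial treatment of the exceptional moduli. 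It applies to a problem that the Siegel zero has already reduced to a $\chi$-twisted ternary divisor sum.
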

Notably, since
$$\sum_{\substack{q\sim Q\\ D|q}}\frac{\psi(x)}{\phi(q)}\ll \frac x{\phi(D)},$$
we can rewrite Theorem \ref{MainTheoremii} as conditional progress towards the Elliott-Halberstam conjecture.
\begin{corollary} Let $\chi$, $D$, $x$, and $Q$ be as in the previous theorem.  Then
$$\sum_{q\sim Q}\max_{(a,q)=1}\left|\psi(x,q,a)-\frac{\psi(x)}{\phi(q)}\right|\ll \frac{xV^{16}}{\eta}+\exp\Big(-C_{\nu}\sqrt{V\log\eta}\Big)\bigg).$$
\end{corollary}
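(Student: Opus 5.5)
The corollary should follow directly from Theorem \ref{MainTheoremii} by the triangle inequality. For each $q\sim Q$ and $(a,q)=1$ write
$$\psi(x,q,a)-\frac{\psi(x)}{\phi(q)}=\left[\psi(x,q,a)-\left(1-\chi\left(\frac{aD}{(q,D)}\right)\right)\frac{\psi(x)}{\phi(q)}\right]-\chi\left(\frac{aD}{(q,D)}\right)\frac{\psi(x)}{\phi(q)}.$$
Taking the maximum over $a$ and summing over $q\sim Q$, the first bracket contributes exactly the left-hand side of Theorem \ref{MainTheoremii}, which is already bounded by the desired right-hand side. It remains to handle
$$\sum_{q\sim Q}\max_{(a,q)=1}\left|\chi\left(\frac{aD}{(q,D)}\right)\right|\frac{\psi(x)}{\phi(q)}.$$

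The key observation is that $\chi$ is primitive modulo $D$, so $\chi(m)=0$ whenever $(m,D)>1$. If $D\nmid q$, then $D/(q,D)>1$ shares a prime factor with $D$, so $\chi\left(\frac{aD}{(q,D)}\right)=0$ for every $a$. If $D\mid q$, the argument reduces to $a$, and $|\chi(a)|\le 1$. The extra term is therefore at most
$$\sum_{\substack{q\sim Q\\ D\mid q}}\frac{\psi(x)}{\phi(q)}.$$

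To bound this, write $q=Dm$ with $m\sim Q/D$. Since $\phi(Dm)\ge\phi(D)\phi(m)$ and $\sum_{m\sim M}1/\phi(m)\ll 1$, and $\psi(x)\ll x$, the sum is $\ll x/\phi(D)$; this is the bound noted just before the corollary.

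The only real step is to absorb $x/\phi(D)$ into $xV^{16}/\eta+\exp(-C_\nu\sqrt{V\log\eta})$, and I expect this to be the main obstacle. I would use the standard Siegel-zero bound $L(1,\chi)\ll(1-\beta)\log^2 D=\log D/\eta$ together with the class-number-type lower bound $L(1,\chi)\gg D^{-1/2}$, which give $\eta\ll D^{1/2}\log D$. Then $1/\phi(D)\ll\log\log D/D$, and $D\gg\eta^{2-o(1)}$, so $x/\phi(D)\ll x/\eta$. This is dominated by $xV^{16}/\eta$ because $V\ge 1$. (I read the exponential term in the statement as also carrying the factor $x$, as in Theorem \ref{MainTheoremi}.) Combining the three pieces gives the corollary.
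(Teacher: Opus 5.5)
Your proof is correct and follows the paper's argument. The paper's justification is the single remark that $\chi\bigl(aD/(q,D)\bigr)$ vanishes unless $D\mid q$ and that $\sum_{q\sim Q,\,D\mid q}\psi(x)/\phi(q)\ll x/\phi(D)$, combined with Theorem \ref{MainTheoremii}; it never checks that $x/\phi(D)$ is absorbed into $xV^{16}/\eta$. Your step $\eta\ll D^{1/2}\log D$, which is equivalent to Landau's bound $1-\beta\gg D^{-1/2}\log^{-2}D$ quoted in the introduction, fills that gap correctly.
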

We note that the analogous problem with a fixed congruence class $a$ has been dealt with previously, as the current author \cite{WrLM} found that $$\sum_{\substack{q\sim Q \\ (a,q)=1}}\left|\psi(x,q,a)-\frac{\psi(x)}{\phi(q)}\right|\ll xL(1,\chi)\log^{5} D$$
for $q<x^{\frac 23-\varepsilon}$.  However, the techniques involved in that paper do not apply here.


\section{Ideas for the Paper: Notation}

Let $\chi$ be an exceptional character of conductor $D$, and let $\ast$ denote the Dirichlet convolution.  Moreover, let $\mu$ denote the M\"{o}bius function, and recall that $$\Lambda(n)=(\mu\ast\log)(n),$$ where $\Lambda$ is the von Mangoldt function defined earlier.

Traditionally, questions about primes have tended to focus on the von Mangoldt function.  In \cite{FI03}, the authors' idea was that one can rewrite $\Lambda$ with $$\Lambda=\mu\ast\log\ast \chi\ast \chi\mu,$$ since $(\chi\ast \chi\mu)(n)$ is 1 if $n=1$ and zero otherwise.  Regrouping these terms, one has that
\begin{gather}\label{nulambda}
\Lambda=(\log \ast \chi)\ast(\mu\ast \chi\mu).
\end{gather}
The $\log\ast\chi$ term can be evaluated using standard $L$-function contour integration techniques and Weil's bound for Kloosterman sums.  Meanwhile, for the $\mu\ast \chi\mu$ term, one can see that $$|(\mu\ast \chi\mu)(n)|\leq (1\ast\chi)(n),$$and this, too, is easier to evaluate than the von Mangoldt function.

More specifically, define
\begin{gather*}
\lambda(n)=(\chi\ast 1)(n),\\
\lambda'(n)=(\chi\ast \log )(n),\end{gather*} and $$\nu'(n)=(\mu\ast (\mu \chi))(n).$$
Importantly, under the assumption of a Siegel zero, sums over $\lambda(n)$ are small.  In particular, if $x>D^2$, then
\begin{gather}\label{lambdaid1}
\sum_{d\leq x}\lambda(d)=xL(1,\chi)+O\left(\sqrt{Dx}\right),
\end{gather}
and
\begin{gather}\label{lambdaid2}
\sum_{D^2<d\leq x}\frac{\lambda(d)}{d}\ll L(1,\chi)\log x,
\end{gather}
which are Lemma 5.1 and equation (5.9) of \cite{FI03}, respectively.  For results on Siegel zeroes, these identities are a key point of leverage, as the assumption that $L(1,\chi)$ is small allows one to extract savings from these two bounds.

The relationship between $\lambda'$ and $\Lambda$ can be given by
\begin{gather}
\lambda'=\chi\ast \log =\chi\ast 1\ast \mu \ast \log =\lambda\ast \Lambda,
\end{gather}
and
\begin{gather}
\Lambda=\mu\ast \log =\chi\ast \chi\mu \ast \mu\ast \log =\nu\ast \lambda'.
\end{gather}
\section{Ideas for the Paper: the Friedlander-Iwaniec Framework}
This last identity can be used to re-express Chebyshev's function:
$$\psi(x,q,a)=\mathop{\sum\sum}\limits_{\substack{d,m\\dm\leq x\\ dm\equiv a\pmod q}}\nu(d)\lambda'(m).$$
Friedlander and Iwaniec split this double sum into two parts: the part where $d$ is small, and the rest.  In the former case, $\lambda'(m)$ can be evaluated directly with $\nu(d)$ having little impact, and it is this sum over $\lambda'(m)$ that gives the main term in their theorem.

In the case where $d$ is not small, the authors use the fact that we mentioned in our discussion of (\ref{nulambda}), namely that
\begin{gather}\label{lambdanubound}
|\nu'(d)|\leq \lambda(d).
\end{gather}
This allows them to write
$$|(\nu'\ast \lambda')(n)|\leq (\lambda\ast \lambda')(n)\leq \log(n)(\lambda\ast 1\ast 1)(n).$$
Unfortunately, there are few results that can help with an expression such as the one on the right, since this is a quaternary divisor function $\chi\ast 1\ast 1\ast 1$, and the only divisor functions where $q$ can be taken larger than $\sqrt x$ are binary ones like $1\ast 1$ or ternary ones like $1\ast 1\ast 1$.  To combat this, the authors use an inequality of Landreau \cite{Land} that essentially simplifies the expression to $\lambda\ast 1$.  This simplified expression is much more amenable to ternary sum methods, but this technique comes at the cost of a significantly worse bound and thus only helps under the assumption of a strong Siegel zero.  However, recent work of Sachpazis limited the sum to rough numbers and was able to reduce this cost significantly.

\section{Ideas for the paper: divisor sums}

We note that the ultimate goal here would be to show that $\Lambda$ acts like $\lambda'$ with little error.  If we could do this, we would study primes by analyzing a function $\lambda'$ that looks like the binary divisor function $\tau$.  Sums of $\tau$ in arithmetic progressions are well-understood, with the classical result in this vein being proven in the 1950s by Selberg and Hooley - see \cite{Ho}, \cite[p. 234-237]{Se}, \cite[Corollary 1]{HB79}.
\begin{theorem}\label{binarydivisor}
If $q\leq x^{\frac 23-2\varepsilon}$ and $(a,q)=1$, then
$$\sum_{\substack{n\leq x\\ n\equiv a\pmod q}}\tau(n)=\frac{1}{\phi(q)}\sum_{\substack{n\leq x\\ (n,q)=1}}\tau(n)+O\left(\frac{x}{q^{1+\varepsilon}}\right).$$
\end{theorem}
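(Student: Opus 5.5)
The plan is to follow the classical Selberg--Hooley argument: write $\tau(n)=\sum_{dm=n}1$ and use the Dirichlet hyperbola method together with Weil's bound for Kloosterman sums.

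\textbf{Step 1 (hyperbola split).} For $n\le x$ with $n\equiv a\pmod q$, every factorization $n=dm$ has $d\le\sqrt x$ or $m\le\sqrt x$. Hence
\[
\sum_{\substack{n\le x\\ n\equiv a\ (q)}}\tau(n)=2\sum_{d\le\sqrt x}\#\{m\le x/d:\ dm\equiv a\ (q)\}-\#\{d,m\le\sqrt x:\ dm\equiv a\ (q)\}.
\]
Since $(a,q)=1$, only $d$ with $(d,q)=1$ contribute. For each such $d$ the congruence fixes $m\equiv a\bar d\pmod q$.

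\textbf{Step 2 (small $q$ versus large $q$).} When $q\le\sqrt x$, counting $m$ in a progression of length $x/d\ge\sqrt x\ge q$ gives $\frac{x}{dq}+O(1)$. Summing over $d\le\sqrt x$ yields the expected main term with total error $O(\sqrt x)$, which is acceptable since $\sqrt x\le x/q^{1+\varepsilon}$ for $q\le x^{1/2-\varepsilon}$ roughly. The real content is the range $\sqrt x<q\le x^{2/3-2\varepsilon}$. There the $m$-progressions have length $x/d<q$ for $d$ near $\sqrt x$, so trivial counting fails and we must exploit equidistribution of $a\bar d \bmod q$ as $d$ varies.

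\textbf{Step 3 (Fourier expansion and Kloosterman sums).} Write the count as $\sum_{d\sim \Delta}\sum_{m\sim M}\mathbf 1_{dm\equiv a\,(q)}$ in dyadic boxes, using smooth weights to remove the sharp cutoffs at a cost of $x^{-\varepsilon}$ relative boundary losses. Then detect $m\equiv a\bar d$ via additive characters:
\[
\mathbf 1_{m\equiv a\bar d}=\frac1q\sum_{h\bmod q}e\!\left(\frac{h(m-a\bar d)}{q}\right).
\]
The $h=0$ term gives the main term. By Poisson summation in $m$, the terms with $h\ne0$ reduce to $|h|\lesssim q/M$, each with weight $\approx M/q$. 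The $d$-sum then becomes an incomplete Kloosterman sum
\[
\sum_{d\sim\Delta,\,(d,q)=1}e\!\left(\frac{-ah\bar d}{q}\right)\ll \left(\frac{\Delta}{q}+1\right)q^{1/2+\varepsilon}(h,q)^{1/2},
\]
which follows from Weil's bound after completing the sum. The total nonzero-frequency error is
\[
\ll \sum_{0<|h|\le q/M}\frac{M}{q}\, q^{1/2+\varepsilon}(h,q)^{1/2}\ll q^{1/2+\varepsilon},
\]
up to logarithmic factors, for each of $O(\log^2x)$ boxes. The main terms must finally be matched with $\frac1{\phi(q)}\sum_{n\le x,(n,q)=1}\tau(n)$. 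This holds because both sides equal the same smoothed count over coprime residues; the matching uses $\sum_{(d,q)=1,\,d\le y}1=\frac{\phi(q)}{q}y+O(\tau(q))$.

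\textbf{Step 4 (the obstacle).} The error $q^{1/2+\varepsilon}$ must be at most $x/q^{1+\varepsilon}$, i.e. $q^{3/2+2\varepsilon}\le x$, which is exactly the condition $q\le x^{2/3-2\varepsilon}$ with the $\varepsilon$'s adjusted. The delicate step is handling the sharp cutoffs $dm\le x$ and $d\le\sqrt x$. The hyperbolic boundary region must be controlled either by the smoothing argument or by splitting into $O(x^{\varepsilon})$ boxes, without losing more than $x^{\varepsilon}$. The gcd factors $(h,q)^{1/2}$ must also be summed carefully, including when $q$ has many small prime factors.
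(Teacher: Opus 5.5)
The paper gives no proof of this theorem (it is quoted from Hooley, Selberg and Heath-Brown), and your outline is the classical argument behind those citations, so it is correct: the hyperbola method, Poisson summation in $m$, and Weil's bound for the resulting incomplete Kloosterman sums in $d$ give the required saving for $q\le x^{2/3-2\varepsilon}$. For the step you flag, three details need care:

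\begin{enumerate}[(a)]
\item Smooth the hyperbola identity itself, using weights $w(d)+w(m)-w(d)w(m)$ with $w$ a smooth cutoff at $\sqrt x$, and smooth the condition $dm\le x$ as well. Then every boundary loss comes from $x-O(x^{1-\delta})<n\le x$, and Shiu's theorem bounds it by $O(x^{1-\delta}(\log x)/q)$.
\item Do not smooth the straight edges $d\le\sqrt x$, $m\le\sqrt x$ of your sharp decomposition one at a time. For $q>\sqrt x$, trivial counting only bounds such a strip by $x^{1/2-\delta}$, not by $x^{1-\delta}/q$.
\item After Poisson summation the transform depends on $d$, so you need partial summation in $d$ before applying Weil's bound. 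The dual range is $0<|h|\le qx^{\delta+o(1)}/M$ rather than $q/M$, which is harmless once you take, say, $\delta=\tfrac32\varepsilon$.
\end{enumerate}
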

We note that this theorem also applies to twisted divisor functions, as well as divisor sums over subdyadic intervals as follows.
\begin{theorem}\label{binaryTdivisor}
Let $q\leq x^{\frac 23-2\varepsilon}$ and $(a,q)=1$.  Define $f$ such that either $f(n)=\chi(n)$ or $f(n)=1$.  Let $U,V>x^\varepsilon$ for some small $\varepsilon>0$, and let $\mathcal U=[U,U+U']$ and $\mathcal V=[V,V+V']$ where $\frac{U}{\log^B U}\leq U'\leq U$ and $\frac{V}{\log^B V}\leq V'\leq V$ for some large fixed $B$.  Then
$$\mathop{\sum\sum}\limits_{\substack{u\in U,v\in V \\ uv\equiv a\pmod q}}f(v)=\frac{1}{\phi(q)}\mathop{\sum\sum}\limits_{\substack{u\in U,v\in V \\ (uv,q)=1}}f(v)+O\left(\frac{x}{q^{1+\varepsilon}}\right).$$
\end{theorem}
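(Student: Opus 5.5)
We prove Theorem \ref{binaryTdivisor} with the classical Selberg--Hooley argument: detect the congruence with additive characters, apply Poisson summation in the smooth variable, and bound the resulting Kloosterman sums with Weil's bound. Since $f$ depends only on $v$, the smooth variable is $u$. For each fixed $v$ with $(v,q)=1$, we count $u\in\mathcal U$ with $u\equiv a\bar v\pmod q$. Write $\mathbf 1_{\mathcal U}=W+E$, where $W$ is a smooth weight equal to $1$ on $[U+\Delta,U+U'-\Delta]$ and supported in $\mathcal U$, with derivatives $W^{(j)}\ll\Delta^{-j}$. The edge error contributes at most $\sum_{v}(\Delta/q+1)$ over $u$ in two intervals of length $\Delta$. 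Choosing $\Delta=U x^{-\varepsilon'}$, this is $\ll V\Delta/q + V$. Since $V\le x/U$ and $V\ll x^{1-\varepsilon}$, this is fine provided $V\ll x/q^{1+\varepsilon}$. If that fails, we swap the roles of $u$ and $v$: we apply Poisson summation in $v$ instead, splitting $v$ into residue classes mod $D$ when $f=\chi$, or using only the weaker variable. Since $UV\asymp x$, one of $U,V$ is at least $\sqrt x>q^{1+\varepsilon}$-ish, and in the relevant range we pick the longer variable. The paper's own argument is by reference to the Hooley/Heath-Brown method, so this balancing is the standard one.

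For the smoothed sum, Poisson summation gives
$$\sum_{u\equiv a\bar v\ (q)}W(u)=\frac1q\sum_{h}\hat W(h/q)\,e\!\left(\frac{ah\bar v}{q}\right).$$
The $h=0$ term, summed over $(v,q)=1$ with weight $f(v)$, reproduces the main term $\frac1{\phi(q)}\sum_{(uv,q)=1}f(v)$ up to acceptable error. Here we use the identity $\frac1q\sum_u W(u)=\frac1{\phi(q)}\sum_{(u,q)=1}W(u)+O(\tau(q)\text{-type error})$. Equivalently, we subtract the same expansion applied to the coprimality-averaged sum, which kills the $h=0$ terms exactly after a M\"obius inversion over $d\mid(h,q)$. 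The terms with $|h|>qx^{\varepsilon}/\Delta$ are negligible by repeated integration by parts.

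The remaining task, and the main obstacle, is to bound
$$\frac1q\sum_{0<|h|\le H}\Big|\sum_{v\in\mathcal V,(v,q)=1}f(v)\,e\!\left(\frac{ah\bar v}{q}\right)\Big|\,\frac{U'}{1}, \qquad H=qx^{\varepsilon}/\Delta .$$
For $f=1$, the inner sum is an incomplete Kloosterman sum. Completing it mod $q$ and applying Weil's bound gives $\ll (V/q+1)\,q^{1/2+\varepsilon}(h,q)^{1/2}$. For $f=\chi$, we first split $v$ into classes mod $D$ and complete modulo $\mathrm{lcm}(q,D)$, which produces a Kloosterman-type sum twisted by $\chi$. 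Weil's bound still gives square-root cancellation, at the cost of a factor depending on $D$; this is harmless since $x=D^{V}$ with $V$ large, so $D\le x^{\nu/200}$. Summing over $h$, the total error is
$$\ll \frac{U}{q}\cdot H\cdot\Big(\frac Vq+1\Big)q^{1/2+\varepsilon}\ll x^{2\varepsilon'}\Big(\frac{V}{q^{1/2}}+q^{1/2}\Big).$$
We then compare this with $x/q^{1+\varepsilon}$. Taking $V\le U$ (Poisson in the longer variable, so $V\le\sqrt x$), the bound is acceptable when $q^{3/2}\le x^{1-\varepsilon}$, i.e.\ $q\le x^{2/3-2\varepsilon}$. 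The first term $\sqrt x/q^{1/2}\le x/q^{1+\varepsilon}$ also needs only $q\le x^{1-}$.

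The delicate points are the following. First, the loss in $\Delta$ forced by the subdyadic lengths is at most $\log^B$, so with $\Delta=U'x^{-\varepsilon'}$ the smoothing error is $U'x^{-\varepsilon'}V/q$, which is acceptable. Second, when $f=\chi$ and $v$ is the long variable, we must handle the twist by $\chi$. Here I would decompose $v$ into residue classes modulo $D$, which reduces to the untwisted case with modulus $qD/(q,D)$, and note that the factor $D$ is absorbed by $x^{\varepsilon}$.
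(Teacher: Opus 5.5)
\textbf{Gap: the smoothing edge error is bounded trivially, and the trivial bound fails in exactly the range that matters.} Your overall route is the standard Selberg--Hooley argument, which is all the paper relies on here; the paper gives no proof and cites Theorem~\ref{binarydivisor} for $f=1$ and \cite{Sa}, \cite{WrLM} for $f=\chi$. The route is: Poisson or completion in $u$, Weil's bound for the incomplete Kloosterman sums in $v$, and a $D^{O(1)}$ loss for the $\chi$-twist.

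You bound the edge by $\sum_v(2\Delta/q+2)\ll V\Delta/q+V$. The term $V$ is admissible only when $V\ll x/q^{1+\varepsilon}$, i.e.\ (with $UV\asymp x$) only when $U\gg q^{1+\varepsilon}$. Swapping the variables instead needs $V\gg q^{1+\varepsilon}$. Your justification, that one of $U,V$ is at least $\sqrt x>q^{1+\varepsilon}$, is false throughout the range $x^{1/2}<q\le x^{2/3-2\varepsilon}$ that the theorem is about.

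For example, take $U=V=\sqrt x$ and $q=x^{3/5}$. Then $\Delta<q$, so for each $v$ an edge interval contains $0$ or $1$ admissible $u$. The average is $\Delta/q$, so the ``$+1$'' per $v$ overcounts by a factor $q/\Delta$.

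Moreover, whenever one of $U,V$ does exceed $q^{1+\varepsilon}$, the statement is trivial with no Fourier analysis. You just count the long variable in its progression with error $O(1)$, or $O(D)$ after splitting modulo $D$. So as written your argument proves only the easy case. The hard case is the one the paper actually uses: in ``Partitioning the sum'' the theorem is applied to $m,v$ with $mv>q^{3/2+\alpha}$ and $m,v\le Dq^{1+\alpha}$, where both can be of size about $q^{3/4}<q$.

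\textbf{Fix: estimate the edge with the same Weil input.} The cleanest way is to first dispose of the trivial case $\max(U,V)>q^{1+\varepsilon}$. In the remaining case $U,V\le q^{1+\varepsilon}$, drop the smoothing and complete the sharp $u$-sum directly. Write $\#\{u\in\mathcal U:u\equiv c \ (\mathrm{mod}\ q)\}=\frac1q\sum_{h \bmod q}e(-hc/q)\sum_{u\in\mathcal U}e(hu/q)$ and use $\bigl|\sum_{u\in\mathcal U}e(hu/q)\bigr|\ll\min(U',\|h/q\|^{-1})$.
\begin{itemize}
\item The $h=0$ term gives the main term. The comparison with $\frac1{\phi(q)}\sum_{(u,q)=1}$ costs only $O(V\tau(q)/\phi(q))$.
\item The remaining terms are $\ll\sum_{0<|h|\le q/2}|h|^{-1}\bigl|\sum_{v\in\mathcal V}f(v)e(-ah\bar v/q)\bigr|\ll(V/q+1)q^{1/2+o(1)}D^{O(1)}$.
\item This is $\ll q^{1/2+\varepsilon+o(1)}D^{O(1)}\ll x/q^{1+\varepsilon}$ for $q\le x^{2/3-2\varepsilon}$ and $D$ a small power of $x$.
\end{itemize}
This way you never need to Poisson-sum in the twisted variable or swap roles.

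Alternatively, keep your weight $W$ but majorize the edge indicator by a smooth bump of width $\Delta$ and run the same Poisson--Weil argument on it. That gives $V\Delta/q+x^{\varepsilon'}(V/q+1)q^{1/2+o(1)}$ in place of $V$. Another option is (\ref{congclass01}) with the $\varrho$-terms bounded by (\ref{boundforvarrho}).

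If you do smooth, $\varepsilon'$ must be tied to $\varepsilon$; it cannot be taken arbitrarily small. The loss $V\Delta/q=x^{1-\varepsilon'}/q$ needs $x^{\varepsilon'}\ge q^{\varepsilon}$, while the dual sum needs $x^{\varepsilon'}q^{1/2+o(1)}\ll x/q^{1+\varepsilon}$.
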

The case where $f=1$ is simply a subcase of Theorem \ref{binarydivisor}.  The case where $f=\chi$ appears in several places (e.g. \cite{Sa} and \cite{WrLM}).

The first result on ternary sums on arithmetic progressions that moved beyond the square-root barrier came from Friedlander and Iwaniec in 1985 \cite[p. 339]{FI85}.
\begin{theorem}[Friedlander-Iwaniec, 1985]\label{FIternary}
If $q<x^{\frac{58}{115}-\varepsilon}$ and $(a,q)=1$ then
$$\sum_{\substack{n\leq x\\ n\equiv a\pmod q}}\tau_3(n)=\frac{1}{\phi(q)}\sum_{\substack{n\leq x\\ (n,q)=1}}\tau_3(n)+O\left(\frac{x}{q^{1+\varepsilon}}\right).$$
\end{theorem}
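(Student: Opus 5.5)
The plan is to open up $\tau_3=1\ast1\ast1$, split $n=k\ell m$ dyadically, and separate the configurations where one variable is small from the genuinely ternary one. First I replace the sharp cutoff $n\le x$ by a smooth partition of unity in each of the three variables $k,\ell,m$, with boxes of sizes $K,L,M$ and $KLM\asymp x$. The boundary and edge effects cost only $O(x/q^{1+\varepsilon})$ once the partition is fine enough: $\log^B$-shortened intervals as in Theorem \ref{binaryTdivisor}, plus the trivial bound for the short edge pieces. The problem then reduces to proving, for each smoothed box, that the sum over $k\ell m\equiv a \pmod q$ equals its expected value $\frac{1}{\phi(q)}\sum_{(k\ell m,q)=1}$ up to $O(x^{1-\varepsilon'}/q)$. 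By symmetry I order the sizes as $K\le L\le M$.

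\emph{Unbalanced case.} Suppose $K\le x^{1-\delta}/q^{3/2}$ for a small $\delta>0$. For each fixed $k$ coprime to $q$, the inner sum over $\ell m\equiv a\bar k \pmod q$ is a binary divisor sum of length $x/k$. Since $q\le (x/k)^{2/3-2\varepsilon}$ in this range, Theorem \ref{binarydivisor} (in its subdyadic form, Theorem \ref{binaryTdivisor}) applies. Summing the error $O\big((x/k)q^{-1-\varepsilon}\big)$ over $k\le K$ gives an acceptable total, and summing the main terms over $k$ reconstructs the expected main term.

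\emph{Balanced case.} Here all of $K,L,M$ lie in $[x^{1-\delta}/q^{3/2},\,x^{1/2}]$, so each is essentially a power of $x$ near $x^{1/3}$ when $q\approx x^{1/2}$. I apply Poisson summation modulo $q$ in all three smooth variables. The zero frequencies give exactly the main term. The nonzero frequencies $(h_1,h_2,h_3)$ with $|h_i|\ll q^{1+\varepsilon}/K_i$ contribute complete sums
\[
\sum_{\substack{u,v,w \bmod q\\ uvw\equiv a}} e\Big(\frac{h_1u+h_2v+h_3w}{q}\Big),
\]
which are hyper-Kloosterman sums $\mathrm{Kl}_3(ah_1h_2h_3;q)$. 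Deligne's bound $|\mathrm{Kl}_3|\ll q^{1+\varepsilon}$ alone gives total error about $q^{-3}\cdot KLM\cdot (q^3/KLM)\cdot q=q$, which beats $x/q$ only when $q<x^{1/2}$. To pass the square-root barrier I would not apply Poisson in all three variables blindly. Instead I keep one variable, say $k$, as a genuine summation variable, use Poisson only in $\ell$ and $m$, and then apply Cauchy--Schwarz over $k$ followed by a Weyl-type shift $k\mapsto k+hr$. This exhibits extra cancellation in a sum of the shape
\[
\sum_{k}\Big|\sum_{h}\mathrm{Kl}_3(a\bar k h;q)\,\alpha_h\Big|^2,
\]
and the cancellation comes from the Birch--Bombieri estimate for correlations $\sum_{t \bmod q}\mathrm{Kl}_3(t)\overline{\mathrm{Kl}_3(t+s)}e(\cdot)$, which saves a further $q^{1/2}$ on the complete-sum level.

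The main obstacle is this last step: organizing the Cauchy--Schwarz and shift so that the resulting complete sums in two extra variables are exactly of Birch--Bombieri type, with square-root cancellation. The diagonal $s\equiv0$ terms also have to be balanced against the off-diagonal ones, and the parameters have to be optimized. I expect that optimizing the shift length against the diagonal loss, while keeping the unbalanced-case threshold $x^{1-\delta}/q^{3/2}$ compatible with the balanced range, is precisely what yields the limit $q<x^{58/115-\varepsilon}$.
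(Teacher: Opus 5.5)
The gap is in the balanced case. Your reduction of the unbalanced boxes is sound; it is the same binary-divisor reduction the paper uses when it partitions the sum. But the balanced boxes carry the whole content of the theorem, because your other tools (the unbalanced reduction and full Poisson summation with Deligne's bound) only reach $q<x^{1/2}$. For these boxes you outline a strategy and say you \emph{expect} its optimization to give $58/115$. No estimate is stated, no shift length or parameters are chosen, and nothing shows that any box is handled once $q>x^{1/2}$.

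The sketch also does not fit together as written.
\begin{itemize}
\item With $k$ fixed, Poisson summation in $\ell$ and $m$ produces classical Kloosterman sums $S(h_2,a\bar k h_3;q)$, essentially $\mathrm{Kl}_2(a\bar k h_2h_3;q)$. It does not produce $\mathrm{Kl}_3$, which appears only after completing all three variables.
\item Suppose you then apply Cauchy--Schwarz over $k$ and complete, granting square-root cancellation in every off-diagonal correlation sum. The error is roughly $(xK/q)^{1/2}+K^{1/2}q^{3/4}$.
\item The condition $K^{1/2}q^{3/4}\le x/q$ forces $K\le x^2/q^{7/2}$. This lies below $x/q^{3/2}$ once $q>x^{1/2}$, so the skeleton gains essentially nothing beyond the binary-divisor range.
\item Hence all of the gain would have to come from the shift step, which is exactly the step you leave unspecified.
\end{itemize}
The balanced range is also not ``near $x^{1/3}$''. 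It contains lopsided boxes such as $(x^{1/2},x^{1/4},x^{1/4})$, and the difficulty concentrates there; compare the paper's case split on the size of $M_2$.

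The paper does not prove this statement; it quotes it from \cite{FI85}. It does reproduce the Friedlander--Iwaniec route in its section on their bounds:
\begin{itemize}
\item Complete only the longest variable $t_1$, with $H\asymp q^{1+\alpha}/M_1$ frequencies, using (3.2)--(3.3) of \cite{FI85}.
\item This reduces the problem to trilinear incomplete Kloosterman sums $\sum_{h\le H}\sum_{m}\sum_{n}e(ah\bar m\bar n/q)$.
\item The exponent $1/2+1/230$ comes from genuinely nontrivial estimates for these sums (Theorem \ref{FI:2.6} is one such bound), optimized over all box shapes.
\end{itemize}
Your proposal needs an estimate of that strength, stated and proved, together with the parameter optimization.

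Two further points also need repair.
\begin{itemize}
\item A partition into intervals of relative length $(\log x)^{-B}$ leaves edge boxes near $k\ell m=x$. Their trivial contribution in the progression is $\frac{x}{q}(\log x)^{O(1)-B}$, not $O(x/q^{1+\varepsilon})$. The cutoff must be handled with a power saving, for example with boxes of relative length $x^{-\eta}$, or by using the exact interval in the completed variable as in \cite{FI85}.
\item For composite $q$, the Deligne and Birch--Bombieri inputs are available only prime by prime. Prime-power factors of $q$, and frequencies sharing factors with $q$, need separate treatment.
\end{itemize}
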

Since $\tau(n)=(1\ast 1)(n)$ and $\tau_3(n)=(1\ast 1\ast 1)(n)$, it seems logical to hope that we can generalize these results to other convolutions such as $(\chi \ast 1)(n)$ and $(\chi \ast 1\ast 1)(n)$.  A key step in \cite{FI03} is the realization that this can indeed happen.  In fact, equation (5.6) in \cite{FI03} gives the following.
\begin{theorem}[Friedlander-Iwaniec, 2003]\label{FIternary2}
If $x^{\frac{92}{185}}<q<x^{\frac{58}{115}-\varepsilon}$ and $(a,q)=1$, then
$$\sum_{\substack{dm\leq x \\ d>D^*\\dm\equiv a\pmod q}}\lambda(d)=\frac{1}{\phi(q)}\sum_{\substack{dm\leq x \\ d>D^*\\ (dm,q)=1}}\lambda(d)+O\left(Dx^{\frac{271}{300}+\varepsilon}q^{-97/120}\right).$$
\end{theorem}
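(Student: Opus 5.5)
The plan is to treat the left-hand side as a twisted ternary divisor sum and run the Friedlander--Iwaniec 1985 argument behind Theorem \ref{FIternary}, paying a factor of $D$ for the twist. Writing $\lambda(d)=\sum_{bc=d}\chi(c)$, the sum becomes
$$\mathop{\sum\sum\sum}\limits_{\substack{bcm\leq x,\ bc>D^*\\ bcm\equiv a\pmod q}}\chi(c),$$
which is the ternary sum $\tau_3$ twisted by $\chi$ in one variable. The condition $bc>D^*$ and the cutoff $bcm\leq x$ only restrict the ranges of the variables.

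\textbf{Step 1: decomposition and removal of the twist.} Apply a smooth dyadic (or subdyadic) partition of unity in $b$, $c$ and $m$, reducing to boxes $b\sim B$, $c\sim C$, $m\sim M$ with $BCM\asymp x$. Only $O(\log^{3}x)$ boxes straddle the boundaries $bcm=x$ or $bc=D^*$; these are handled by subdyadic intervals as in Theorem \ref{binaryTdivisor}. Next, split $c$ into residue classes $c\equiv c_0\pmod D$, so that $\chi(c)=\chi(c_0)$ is constant on each class. Each piece is then an untwisted ternary problem with one variable restricted to a progression mod $D$. There are $D$ such pieces, which is the source of the factor $D$ in the error term.

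\textbf{Step 2: unbalanced boxes.} If one variable, say $m$, is long, namely $M\geq x^{1/2+\delta}$ relative to $q$, fix the other two variables and count $m$ in a progression modulo $q$ directly. Alternatively, group two variables and invoke the binary result (Theorem \ref{binarydivisor}, or Theorem \ref{binaryTdivisor} when $c$ is among the grouped variables). In either case the main term $\frac1{\phi(q)}\sum_{(bcm,q)=1}$ comes out with a power saving.

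\textbf{Step 3: balanced boxes.} In the remaining boxes, detect the congruence by Poisson summation in two of the variables, say $b$ and $m$, modulo $q$; in the $c$ variable the modulus is $[q,D]$ because of the class $c_0\pmod D$. The zero frequency gives the expected main term. The nonzero frequencies produce incomplete sums of hyper-Kloosterman type in the remaining variable. These are bounded exactly as in \cite{FI85}, using the Deligne/Bombieri--Birch estimates for the complete two-dimensional Kloosterman sums together with a completion (or Weyl-shift) step.

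Optimizing the box thresholds between Steps 2 and 3 against these bounds should give an error of the shape $x^{271/300+\varepsilon}q^{-97/120}$ per residue class. The hypothesis $q>x^{92/185}$ is what ensures that the balanced-case bound dominates the trivial and unbalanced-case errors, so that the final exponents take this clean form. Summing over the $D$ classes gives the stated error.

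\textbf{Main obstacle.} The main difficulty is Step 3: carrying the Friedlander--Iwaniec ternary Kloosterman analysis through with one variable in a progression mod $D$ without losing more than a factor $D$. To keep the loss linear in $D$, I would first try absorbing the condition $c\equiv c_0\pmod D$ into the Poisson step by summing over $c$ modulo $[q,D]$ and checking that the resulting complete sums factor as a Kloosterman sum mod $q$ times a harmless Ramanujan-type sum mod $D/(q,D)$.
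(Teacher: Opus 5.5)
Your outer structure matches the argument behind (5.6) of \cite{FI03}, which the paper only cites. You write $\lambda=1\ast\chi$ and discard boxes in which some variable exceeds $Dq^{1+\alpha}$, counting the long variable in its progression. You also discard boxes in which some variable lies below $x/q^{3/2+\alpha}$, applying Theorem \ref{binaryTdivisor} to the other two. Then you pay a factor $D$ for the twist.

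\textbf{Step 3 is the gap.} It carries all the content of the statement. What you describe there is not the argument of \cite{FI85}, and as written it cannot pass the square-root barrier. Poisson summation in $b$ and $m$ modulo $q$ turns the nonzero frequencies into roughly $\frac{BM}{q^2}\sum_{h_1,h_2}\sum_{c\sim C}S(ah_1h_2\bar c,1;q)$, with about $q^2/(BM)$ pairs $(h_1,h_2)$. Completing the $c$-sum and applying Deligne's bound to the resulting hyper-Kloosterman sums gives only $\sum_{c\sim C}S(ah\bar c,1;q)\ll q^{1+\varepsilon}$ (say for prime $q$). The total error is therefore of size $q^{1+\varepsilon}$, which is below $x/q$ only when $q<x^{1/2-\varepsilon}$. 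Using Weil's bound term by term instead is worse in balanced boxes. To beat $\sqrt x$ on this route you would need the finer treatment of these averages in \cite{HB86} or \cite{FKM}. As the paper remarks, that treatment has not been adapted to the twisted sums, and it would not produce the exponents $271/300$ and $97/120$ anyway.

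\textbf{What the correct route is.} \cite{FI85}, as the paper retraces in its section on the Friedlander--Iwaniec bounds, complete in one variable only: the longest one, $t_1$, via (\ref{congclass01}) and (\ref{3.3}) with $H=Dq^{1+\alpha}M_2M_3/x$. The nonzero frequencies then give the trilinear incomplete Kloosterman sum $\sum_{0<|h|\le H}c_q(h)\sum_{t_2\in\mathcal M_2}\sum_{t_3\in\mathcal M_3}e(-ah\bar t_2\bar t_3/q)$. After partial summation this is bounded by estimate (2.6) of \cite{FI85} (Theorem \ref{FI:2.6}), which rests on the Birch--Bombieri appendix rather than on sums of hyper-Kloosterman sums. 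The twisted form of that estimate is exactly where the factor $D$ enters. The exponents $271/300$ and $97/120$, and the lower limit $q>x^{92/185}$, come from balancing that specific bound against the trivial and binary ranges. Your proposal only asserts that the optimization ``should give'' them, without carrying it out.

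\textbf{On the obstacle you flag.} The factorization into a Kloosterman sum modulo $q$ times a Ramanujan-type sum modulo $D/(q,D)$ holds only when $(q,D)=1$. Otherwise the part of the class condition modulo $(q,D)$ sits inside the modulus $q$ and constrains the Kloosterman variable. It then has to be detected separately, by characters modulo $(q,D)$, as in Lemma \ref{ternary}.
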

This sum provides the bound for $\theta$ in \cite{FI03}, as all of the other sums in that paper give a looser bound than this for $q$.  Thus, we see that if one is to improve the level of distribution (i.e. increase the value of $\theta$ such that the theorem holds for $q<x^\theta$), one must somehow improve this ternary divisor sum result, either through better Kloosterman sum bounds or by reducing to a binary divisor sum.  

In this paper, we use both techniques.  Ultimately, we prove the following.
\begin{theorem}\label{MainSubThm1}
If $x^{\frac 12}<D^3q<x^{\frac{30}{59}-2\alpha}$ for some small positive $\alpha<1/200$, and if $(a,q)=1$, then
$$\sum_{\substack{dm\leq x \\ d>D^*\\dm\equiv a\pmod q}}\lambda(d)=\frac{1}{\phi(q)}\sum_{\substack{dm\leq x \\ d>D^*\\ (dm,q)=1}}\lambda(d)+O\left(\frac{x}{q^{1+\frac \alpha 2}}\right).$$
\end{theorem}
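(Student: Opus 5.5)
We expand $\lambda(d)=\sum_{d=kl}\chi(k)$, so the sum becomes a ternary sum over $klm\le x$, $klm\equiv a \pmod q$, with weight $\chi(k)$ and $kl>D^*$. We then localize dyadically (or subdyadically) to boxes $k\in\mathcal K$, $l\in\mathcal L$, $m\in\mathcal M$ with $KLM\asymp x$. Boundary effects from the smooth cut $klm\le x$ and from $kl>D^*$ are handled by subdyadic boxes of relative length $\log^{-B}$, as in Theorem \ref{binaryTdivisor}. In each box we compare the sum in the progression with $\frac1{\phi(q)}$ times the sum over $(klm,q)=1$, and we split into cases according to the sizes of $K,L,M$.

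\emph{Reduction to binary sums.} If one of the three variables is small (say $\le x^{\alpha}$), or if two variables can be merged so that the resulting binary problem has its longer side large enough, we fix the small variable $s$ and apply Theorem \ref{binaryTdivisor} to the remaining pair with the residue $a\bar s \pmod q$. If $\chi$ sits on a merged variable, we treat $\chi(k)$ as the twist $f$ after merging, or keep $k$ fixed and use $f=\chi$ on the other binary variable. The binary theorem is valid up to modulus $(x/s)^{2/3-\varepsilon}$, which exceeds $q$ as long as $s\le x^{1/4}$ roughly (since $q<x^{30/59}$). Summing the error $O((x/s)q^{-1-\varepsilon})$ over $s$ stays within $x q^{-1-\alpha/2}$ after absorbing logarithms. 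The same move handles the case where one variable is very large, say $M\ge x^{1-\theta'}$ with $\theta'$ close to $1/3$: we fix $k,l$, complete the sum over $m$ in the progression, and the error is trivial. We also use that the conditions $d>D^*$ and the character being mod $D$ allow us to split $k$ into residue classes mod $D$. This costs a factor $D$, which is why the hypothesis is phrased with $D^3q$ rather than $q$: the effective modulus after fixing the class of $k$ mod $D$ is $Dq$, with room for another $D^2$ loss from Poisson summation in $k$.

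\emph{The genuinely ternary range.} What remains is the case where all three of $K,L,M$ lie roughly in $[x^{1/4},x^{1/2}]$, i.e. no variable is short enough to fix and none is long enough to sum trivially. Here we follow the Friedlander–Iwaniec 1985 method behind Theorem \ref{FIternary}. We detect the congruence by additive characters and apply Poisson summation in two of the variables, using the class of $k$ mod $D$ to make $\chi(k)$ periodic. The zero frequencies give exactly the expected main term $\frac{1}{\phi(q)}\sum_{(klm,q)=1}$. The nonzero frequencies produce incomplete Kloosterman-type sums of the shape $\sum_{h_1,h_2,m} e\!\left(a h_1h_2\overline{m}/q\right)$. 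These are bounded by the Weil bound, and, crucially, by averaging the resulting trilinear form in $m$ and the dual variables with a Deshouillers–Iwaniec/Bombieri–Friedlander–Iwaniec type bilinear Kloosterman estimate, rather than the pointwise bound alone. Optimizing the box sizes against the binary-reduction ranges above should give the threshold $D^3q<x^{30/59-2\alpha}$, with a power saving $q^{-\alpha/2}$.

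\emph{Main obstacle.} The key difficulty is the balanced ternary case: producing an exponent-sum estimate strong enough to improve $58/115$ to $30/59$. I would first try to enlarge the binary-reducible region as much as possible, using the twisted version of Theorem \ref{binaryTdivisor}, so that only a narrow balanced window remains. In that window I would use the bilinear Kloosterman bound in the variables $(h_1h_2, m)$ to beat the pointwise Weil estimate that Friedlander–Iwaniec used. The exponent bookkeeping, including the $D$-losses from periodizing $\chi$, is where the argument is most likely to be tight.
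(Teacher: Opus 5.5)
Your preliminary reductions match the paper's. You write $\lambda=1\ast\chi$ to get a ternary sum. You discard boxes where some variable exceeds $Dq^{1+\alpha}$ (by counting directly in the progression), and boxes where two variables have product above $q^{3/2+\alpha}$ (by Theorem \ref{binaryTdivisor}). The rest is cut into subdyadic boxes.

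The remaining range, however, is the whole content of the theorem, and there your proposal gives no estimate. The sentence ``optimizing the box sizes \dots should give the threshold'' is not backed by any inequality, and the tools you name cannot supply one. Deshouillers--Iwaniec and Bombieri--Friedlander--Iwaniec bounds rest on the Kuznetsov formula. They need an average over the modulus and, in the BFI setting, a fixed residue class. Theorem \ref{MainSubThm1} concerns a single $q$ and an arbitrary $a$. Also, Poisson summation in two variables does not produce $e(ah_1h_2\bar m/q)$. It produces complete Kloosterman sums $S(ah_1\bar m,h_2;q)$, and Weil's bound for these gives nothing beyond $q<x^{1/2}$ in the critical boxes.

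Those critical boxes are not the balanced ones. They are $M_1\approx q$, $M_2\approx M_3\approx (x/q)^{1/2}$. After Poisson in the long variable, the dual length $H\approx Dq^{1+\alpha}/M_1$ is tiny, and the other two variables are shorter than $q^{1/2}$, so completing them is useless. Even the Friedlander--Iwaniec bound (Theorem \ref{FI:2.6}) fails there. In the simplified form $D^3q^{1/4+\alpha}(x/M_2)^{1/2}$ of Lemma \ref{G:FI}, it equals $D^3q^{1/2+\alpha}x^{1/4}$ in these boxes, which only suffices for $q<x^{1/2}$.

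The missing ingredient is a nontrivial bound for short trilinear sums $\sum_{h\le H}\sum_{m\sim M}\sum_{n\sim N}g_1(h)g_2(m)g_3(n)e(ah\bar m\bar n/q)$ with arbitrary $1$-bounded weights. This is Shparlinski's Theorem \ref{Shp:T1}. The paper first reduces $\mathcal D_{\mathcal M,f}$ to such a sum in Lemma \ref{ternary}. When $\chi$ sits on the Poisson variable, it splits that variable into classes mod $D$ and detects $t_2t_3\equiv v\pmod{(q,D)}$ with characters, at the cost of a factor $D$. The result is the bound $D^3q^{3\alpha}M_2^{9/8}q^{-1/8}x^{1/4}$ for $M_2\le x^{1/3}$.

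The paper then uses this bound for $M_2\le q^{3/13}x^{2/13}$ and Lemma \ref{G:FI} above that point. Both bounds become $D^3x^{\varepsilon}q^{7/52+3\alpha}x^{22/52}$, and $q^{7/52}x^{22/52}\le x/q$ is exactly $q^{59}\le x^{30}$. That balance is where $30/59$ comes from, and nothing in your outline reproduces it.
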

\begin{theorem}\label{MainSubThm2}
If $x^{\frac 12}<D^3Q<x^{\frac{16}{31}-2\alpha}$ and $(a,q)=1$, then there exists a $\varepsilon>0$ such that for all but $Q^{1-2\alpha-o(1)}$ of the $q\in [Q,2Q]$,
$$\max_{(a,q)=1}\left|\sum_{\substack{dm\leq x \\ d>D^*\\dm\equiv a\pmod q}}\lambda(d)-\frac{1}{\phi(q)}\sum_{\substack{dm\leq x \\ d>D^*\\ (dm,q)=1}}\lambda(d)\right|\ll \frac{x}{q^{1+\frac \alpha 2}}.$$
\end{theorem}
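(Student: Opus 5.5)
The plan is to treat Theorem \ref{MainSubThm2} as the averaged analogue of Theorem \ref{MainSubThm1}: the dispersion and Kloosterman-sum argument stays the same, and only the final estimation of the bilinear forms is upgraded by averaging over the modulus $q\sim Q$. Write $\lambda=\chi\ast 1$, so the sum in question is a ternary sum over $n=\ell k m$, restricted to $\ell k=d>D^*$, with weight $\chi(\ell)$. Using smooth partitions of unity, decompose dyadically into boxes $\ell\sim L$, $k\sim K$, $m\sim M$ with $LKM\asymp x$.

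The boxes split into two classes according to the sizes of $L$, $K$, $M$.

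\textbf{Boxes where two variables can be merged.} If some variable is long, say exceeds $x/Q^{1+\alpha}$, or the box is otherwise unbalanced, fix one variable and merge the other two. This gives a binary (possibly $\chi$-twisted) divisor problem in a progression to modulus $q\le x^{16/31}<x^{2/3-\varepsilon}$. Theorem \ref{binaryTdivisor} then yields the expected main term with error $x/q^{1+\varepsilon}$ for every individual $q$, so no exceptional moduli arise in this class.

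\textbf{Balanced boxes.} In the remaining balanced ranges, where each of $L,K,M$ is roughly $x^{1/3}$, follow the Friedlander--Iwaniec ternary method of Theorem \ref{FIternary}.

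1. Detect the congruence with additive characters modulo $q$.
2. Apply Poisson summation in two of the smooth variables.
3. Reduce the problem to incomplete Kloosterman-type sums $\sum e(a\bar{h}\,\cdot/q)$, weighted by the coefficients $\chi(\ell)$.
4. Bound these by Weil's bound together with the bilinear Kloosterman estimates of Deshouillers--Iwaniec.

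For Theorem \ref{MainSubThm1}, the pointwise estimate is where the exponent $30/59$ comes from. The factor $D^3$ in the hypothesis absorbs the conductor cost of $\chi$. This cost arises because one splits $\ell$ into classes mod $D$ and completes in residues modulo $qD$, losing a power of $D$.

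For the averaged statement, after Poisson summation in the balanced case, the error for each $q$ has the form $E(q)$, a trilinear Kloosterman sum. I will bound the second moment
$$\sum_{q\sim Q}\max_{(a,q)=1}|E(q,a)|^2.$$
The maximum over $a$ is removed by taking $a$ as a function $a(q)$, which is harmless because it becomes one more coefficient. Averaging over $q$ turns the sum over the modulus into an additional smooth variable. This allows the Deshouillers--Iwaniec bound to be applied to sums of Kloosterman sums with the modulus also summed, which gains roughly a factor $Q^{1/2}$ over the pointwise Weil estimate. Balancing this gain against the diagonal terms should raise the exponent from $30/59$ to $16/31$. I will fix the box parameters (the ranges in which merging suffices versus the balanced ranges) so that the second moment is at most
$$Q\left(\frac{x}{Q^{1+\alpha}}\right)^2Q^{-2\alpha}.$$

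Chebyshev's inequality then bounds the number of $q\sim Q$ with
$$\max_a|E(q,a)|>\frac{x}{q^{1+\alpha/2}}$$
by $Q^{1-2\alpha+o(1)}$, where the $o(1)$ absorbs the logarithmic number of dyadic boxes. This is the exceptional set allowed in Theorem \ref{MainSubThm2}. The main terms are the same as in Theorem \ref{MainSubThm1}: they come from the zero frequency after Poisson summation and give $\frac{1}{\phi(q)}\sum_{(dm,q)=1}\lambda(d)$.

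The main obstacle is the exponent bookkeeping in the averaged Kloosterman estimate. One needs the Deshouillers--Iwaniec bound with modulus averaging to beat the pointwise treatment in exactly the balanced ranges, and the merged binary ranges must cover everything else, all while the cost of $D$ stays within $D^3$. Getting the threshold exactly $16/31$ will require optimizing the split point between the two classes of boxes. If the bilinear Kloosterman bound turns out too weak in some subrange, I would first try moving that subrange into the binary divisor case by merging variables differently.
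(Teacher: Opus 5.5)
There is a genuine gap at the step that carries the whole content of the statement: an averaged estimate that is uniform in the residue class. You propose to remove $\max_{(a,q)=1}$ by choosing $a=a(q)$, and then to sum the resulting Kloosterman sums over the modulus using Deshouillers--Iwaniec. But $a(q)$ is not ``one more coefficient''. It sits inside the phase $e\bigl(a(q)h\overline{mn}/q\bigr)$.

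The Kuznetsov-based estimates of Deshouillers--Iwaniec need (a factor of) the modulus to run over a smooth weight, with numerator frequencies independent of it. In other words, they need a fixed residue class. Absorbing $a(q)$ into a numerator variable would enlarge that variable's range by a factor up to $q$ and make it depend on the modulus, which destroys the bound. This is why spectral results beyond $x^{1/2}$ (Bombieri--Friedlander--Iwaniec, or the fixed-$a$ bound quoted after Theorem \ref{MainTheoremii}) are for a fixed $a$, and why the paper says those techniques do not apply here.

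Replacing the maximum by the sum over all $a$ does not rescue a second-moment argument either. That loses a factor of about $\phi(q)$, which is exactly the square-root barrier. So the bound $\sum_{q\sim Q}\max_a|E(q,a)|^2\le x^2Q^{-1-4\alpha}$ that your Chebyshev step needs is unproved. Your claim that balancing ``should'' give $16/31$ is not backed by any computation. Moreover, your pointwise toolkit (Weil plus the Friedlander--Iwaniec treatment) only reaches $58/115$, so there is no $30/59$ baseline to improve on.

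The missing ingredient is a bound for sums $\sum_{h\le H}\sum_{m}\sum_{n}g_1(h)g_2(m)g_3(n)e(ah\overline{mn}/q)$, with arbitrary $1$-bounded weights, that holds with $\max_{(a,q)=1}$ inside for all but few $q\sim Q$. This is Shparlinski's Theorem \ref{Shp:T2}; see \eqref{goodq}.

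The paper reaches such sums by completing only in the longest variable $t_1$ (Lemma \ref{ternary} and its corollary). When $\chi$ sits on $t_1$, one splits $t_1$ modulo $D$ and detects residues modulo $(q,D)$ by characters. This gives $g_1(h)=e(-vh/D)$ and $g_2=g_3=\chi'$, at the cost of a factor $D$.

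The surviving boxes satisfy $x/q^{3/2+\alpha}\le M_3\le M_2\le M_1\le Dq^{1+\alpha}$. A variable can be summed trivially only above $Dq^{1+\alpha}$, not above $x/Q^{1+\alpha}$. Two variables are merged into Theorem \ref{binaryTdivisor} when the third is below $x/q^{3/2+\alpha}$. This range is wide, not ``each roughly $x^{1/3}$'', so two bounds are split according to $M_2$:
\begin{itemize}
\item For $M_2>Q^{2/7}x^{1/7}$, Lemma \ref{G:FI} gives $D^3Q^{1/4}(x/M_2)^{1/2}$.
\item For $M_2\le Q^{2/7}x^{1/7}$, Lemma \ref{ShpApp2} with $\kappa=\alpha/2$ gives a bound whose main term is $D^3M_2^{5/4}x^{1/4}Q^{-1/4}$, valid outside $Q^{1-2\alpha+o(1)}$ moduli.
\end{itemize}
Both are at most $D^3Q^{3/28}x^{3/7}$, up to $x^{\varepsilon}Q^{O(\alpha)}$. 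This is $\ll x/Q^{1+\alpha}$ when $D^3Q<x^{16/31-2\alpha}$, and that is where $16/31$ comes from. The exceptional set is inherited directly from Theorem \ref{Shp:T2}, not produced by a moment bound.
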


One can then simply plug this result into \cite[Lemma 3.2]{Sa} to prove the theorem.

\section{New Kloosterman ideas}

The trick for proving Theorems \ref{MainSubThm1} and \ref{MainSubThm2} will be to apply a relatively new result of Shparlinski \cite{Shp} on trilinear Kloosterman sums.  Let us denote
$$K_\chi=K_\chi(q,a)=\sum_{0<h\leq H}\sum_{0\leq m\leq M}\sum_{0\leq n\leq N}f_1(m)f_2(n)e\left(\frac{ah\bar m\bar n}{q}\right),$$
where one of the $f_j$ is $\chi$ and the other is 1.  In the original paper of Friedlander and Iwaniec \cite{FI85} on ternary divisor sums, they used the following inequality, which appears as (2.6) in that work.  
\begin{theorem}[Friedlander-Iwaniec, 1985]\label{FI:2.6}
Let $(a,q)=1$.  Then
\begin{align*}K_\chi(a)
\ll & Dx^\varepsilon \left(q^\frac 34 H^{\frac 12}M^{\frac 12} + q^\frac 14HM + q^\frac 13H^\frac 23M^\frac 13N^\frac 23+HM^\frac 23N^\frac 23+q^{-1}HMN\right).
\end{align*}
\end{theorem}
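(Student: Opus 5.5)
The approach is to reduce the twisted sum $K_\chi$ to untwisted trilinear sums of the kind treated in \cite{FI85}, paying at most a factor $D$. Write $f_1=\chi$; the case $f_2=\chi$ is symmetric. Since $\chi$ is periodic mod $D$, split the $m$-sum into the $D$ residue classes $m\equiv c\pmod D$. On each class $\chi(m)=\chi(c)$ is constant, so
$$K_\chi=\sum_{c\bmod D}\chi(c)\,K(c),\qquad K(c)=\sum_{0<h\leq H}\sum_{\substack{m\leq M\\ m\equiv c\,(D)}}\sum_{n\leq N}e\left(\frac{ah\bar m\bar n}{q}\right).$$
Each $K(c)$ is an unweighted trilinear Kloosterman sum, except that $m$ runs over an arithmetic progression of modulus $D$. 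If it satisfies the bound of (2.6) in \cite{FI85} with the same parameters, up to $x^\varepsilon$, then summing trivially over $c$ gives the stated bound with the factor $Dx^\varepsilon$.

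The main step is to re-run the Friedlander--Iwaniec argument for the progression-restricted sum $K(c)$.
\begin{enumerate}[(i)]
\item Take the $h$-sum and the $n$-sum outside via H\"older's inequality (or Cauchy--Schwarz), which leaves moments of incomplete sums over $m$.
\item Complete those $m$-sums modulo $q$. The congruence condition $m\equiv c\pmod D$ can be detected by additive characters $\frac1D\sum_{b\bmod D}e(b(m-c)/D)$, so after completion one is summing a complete exponential sum modulo $[q,D]$ with an extra linear phase.
\item Bound these complete sums by Weil's bound (for the Kloosterman sums) and Deligne's bound (for the higher-dimensional sums arising from the Hölder step). An extra linear phase only shifts the frequency of the complete sum and does not change the square-root cancellation.
\end{enumerate}
The averaging over $b$ is harmless: the factor $1/D$ offsets the $D$ values of $b$. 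So each $K(c)$ obeys the bound of \cite{FI85} up to at most a further factor $x^\varepsilon$, and the five terms $q^{3/4}H^{1/2}M^{1/2}$, $q^{1/4}HM$, $q^{1/3}H^{2/3}M^{1/3}N^{2/3}$, $HM^{2/3}N^{2/3}$, and $q^{-1}HMN$ are recovered term by term.

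I expect the main obstacle to be the case $(D,q)>1$. Then $\bar m$ modulo $q$ and the class of $m$ modulo $D$ interact, and the completed sums modulo $[q,D]$ do not factor cleanly. I would first try writing $q=q_1q_2$ with $q_1\mid D^\infty$ and $(q_2,D)=1$, and use the Chinese remainder theorem to split $e(ah\bar m\bar n/q)$ into a factor modulo $q_1$ and a factor modulo $q_2$. The factor modulo $q_1$ depends only on $m,n$ modulo $q_1$, and one can absorb it into the splitting into residue classes modulo $\mathrm{lcm}(D,q_1)$. This costs extra losses that the generous factor $D$ (rather than $\sqrt D$) in the statement should accommodate. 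The remaining sum is then handled by the argument above with modulus $q_2$.
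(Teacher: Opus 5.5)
\textbf{Gap: the case $(q,D)>1$.} The paper gives no argument of its own for this statement. It quotes (2.6) of \cite{FI85} for the untwisted sum, together with the remark above (5.6) of \cite{FI03} that twisting one variable by $\chi$ costs a factor $D$. Your plan is the natural reading of that remark: split the twisted variable into the $D$ classes $c \bmod D$, bound each class sum by the \cite{FI85} bound, and sum trivially over $c$ (the factor $D$ is the number of classes). For $(q,D)=1$ your sketch is sound in outline. Completing modulo $qD$, each complete sum factors as a Kloosterman-type sum modulo $q$ times a complete linear sum modulo $D$, so nothing is lost.

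The problem is your treatment of $(q,D)>1$, and this case cannot be set aside. The statement is applied to every $q$ with $(a,q)=1$, including $D\mid q$, which is exactly where the main term $\mathds{1}_{D\mid q}\chi(a)$ comes from. Your proposed fix is to write $q=q_1q_2$ with $q_1\mid D^\infty$ and absorb $e(a_1h\bar m\bar n/q_1)$ by fixing residue classes modulo $\mathrm{lcm}(D,q_1)$. This discards all oscillation modulo $q_1$ and costs a factor that grows with $q_1$, since that phase depends on $h$, $m$ and $n$ modulo $q_1$. But $q_1$ is not bounded in terms of $D$. If $q$ is a large power of a prime dividing $D$, then $q_2=1$ and you are left with the trivial bound $HMN$. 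For $H=M=N=q^{1/2}$ this is $q^{3/2}$, which exceeds the target $Dx^{\varepsilon}q^{5/4}$ as soon as $Dx^{\varepsilon}<q^{1/4}$. So the factor $D$ does not absorb this loss.

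\textbf{The fix: no factorization is needed.} Put $J=(q,D)$. Then $(D/J,q/J)=1$, so $t\mapsto c+Dt$ is a bijection from $\mathbb{Z}/(q/J)$ onto the residue class $y\equiv c \bmod J$ inside $\mathbb{Z}/q$. Completing the progression-restricted $m$-sum modulo $[q,D]$ therefore produces, up to a unimodular factor, sums of the form
\[
\sum_{\substack{y \bmod q\\ y\equiv c \bmod J}} e\Big(\frac{r\bar y+sy}{q}\Big).
\]
Since $J\mid q$, the condition $y\equiv c \bmod J$ is detected by additive characters modulo $J$, which are additive characters modulo $q$. It therefore only shifts the linear frequency $s$. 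Weil's bound, and Deligne's bound in the H\"older step, then apply exactly as in the coprime case. This is the same reduction from a condition modulo $D$ to one modulo $J$ that the paper carries out in the proof of Lemma \ref{ternary}.
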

We will apply this in the case where at least one of $m$, $n$ is fairly large.

Note that the authors originally proved this in \cite{FI85} without the character, and the bound that they found did not have the additional $D$ in the front.  The later adaptation of this result to the case where one of the variables is twisted by $\chi$ costs an additional factor of $D$ (as noted in the discussion just above (5.6) in \cite{FI03})

In the case where both $m$ and $n$ are fairly small, we will use the aforementioned results of Shparlinski \cite{Shp}, which are as follows.  For a triple of 1-bounded functions $g=(g_1,g_2,g_3)$, define
$$K_g=K_g(q,a)=\sum_{0<h\leq H}\sum_{M\leq m\leq 2M}\sum_{N\leq n\leq 2N}g_1(h)g_2(m)g_3(n)e\left(\frac{ah\bar m\bar n}{q}\right).$$
Theorem 1.1 of \cite{Shp} gives a bound for individual $q$.
\begin{theorem}[Shparlinski, 2019]\label{Shp:T1} Let $(a,q)=1$.  Then
$$K_g(q,a)\ll \left(HM+(HM)^\frac 34Q^\frac 14\right)\left(N^\frac 78Q^{-\frac 18}+N^\frac 12\right)Q^{o(1)}.$$
\end{theorem}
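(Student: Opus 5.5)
The statement is Shparlinski's bound quoted from \cite{Shp}. Here is how I would reprove it, reading $Q$ as the modulus $q$, with $Q\asymp q$ and $N<q$. The plan is to collapse the two variables $h,m$ into a single residue class modulo $q$ and then to use H\"older's inequality. That reduces everything to two counting problems: a multiplicative-energy count for the pairs $(h,m)$, and an additive energy for the modular inverses $\bar n$.

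\textbf{Step 1: collapse $(h,m)$.} For each residue $u \bmod q$ set
$$w(u)=\#\{(h,m):\ 0<h\le H,\ M\le m\le 2M,\ h\bar m\equiv u \pmod q\},$$
and absorb $g_1(h)g_2(m)$ into a complex weight $\omega(u)$ with $|\omega(u)|\le w(u)$. Then
$$K_g=\sum_{u \bmod q}\omega(u)S(u),\qquad S(u)=\sum_{N\le n\le 2N}g_3(n)e\!\left(\frac{au\bar n}{q}\right).$$
Writing $|\omega|=|\omega|^{1/2}\cdot|\omega|^{1/2}$ and applying H\"older with exponents $(2,4,4)$ gives
$$|K_g|\le \Big(\sum_u w(u)\Big)^{1/2}\Big(\sum_u w(u)^2\Big)^{1/4}\Big(\sum_{u \bmod q}|S(u)|^4\Big)^{1/4}.$$

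\textbf{Step 2: the $(h,m)$ factors.} Trivially $\sum_u w(u)=O(HM)$. The sum $\sum_u w(u)^2$ counts solutions of $h_1m_2\equiv h_2m_1\pmod q$. Write $h_1m_2-h_2m_1=kq$ with $|k|\ll HM/q$. For $k\neq 0$, fixing $k$ and $(h_1,m_2)$ determines $h_2m_1$, and the divisor bound gives $q^{o(1)}$ choices. For $k=0$ the count is $HM\,q^{o(1)}$. Hence
$$\sum_u w(u)^2\ll \big(HM+(HM)^2/q\big)q^{o(1)}.$$

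\textbf{Step 3: the fourth moment.} Summing over all $u\bmod q$ and expanding, the fourth moment is bounded by $q\,T(N)$, where
$$T(N)=\#\{n_1,\dots,n_4\sim N:\ \bar n_1+\bar n_2\equiv \bar n_3+\bar n_4 \pmod q\}$$
is the additive energy of the inverses. The bound I need is
$$T(N)\ll \big(N^{7/2}q^{-1/2}+N^2\big)q^{o(1)}.$$
This is a Bourgain--Garaev type estimate. To prove it I would clear denominators, rewriting the congruence as $(n_1+n_2)n_3n_4\equiv (n_3+n_4)n_1n_2\pmod q$. I would then fix the sums $s=n_1+n_2$ and $t=n_3+n_4$ and count solutions of the resulting congruence in short boxes, via a lattice-point argument or a sum--product style incidence bound. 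Failing that, I would fall back on the explicit energy bound stated in \cite{Shp}.

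\textbf{Step 4: assemble.} Inserting the three bounds:
$$K_g\ll (HM)^{1/2}\big(HM+(HM)^2q^{-1}\big)^{1/4}q^{1/4}\big(N^{7/8}q^{-1/8}+N^{1/2}\big)q^{o(1)}.$$
The $(h,m)$ part simplifies as $(HM)^{1/2}\big(HM+(HM)^2q^{-1}\big)^{1/4}q^{1/4}=(HM)^{3/4}q^{1/4}+HM$. This is exactly the claimed bound $\left(HM+(HM)^{3/4}Q^{1/4}\right)\left(N^{7/8}Q^{-1/8}+N^{1/2}\right)Q^{o(1)}$.

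I expect the main obstacle to be Step 3. Steps 1, 2 and 4 are routine H\"older and divisor-function counting. The saving $N^{7/2}q^{-1/2}$ over the trivial $N^3$ in the energy of modular inverses is the genuinely arithmetic input, and it is what produces the exponent $N^{7/8}Q^{-1/8}$.
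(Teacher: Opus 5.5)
The paper gives no proof of this statement. It is quoted from \cite{Shp}, with $Q$ standing for the size of the modulus, and is used only as a black box in Lemma \ref{ShpApp1}.

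Your reconstruction is a correct reduction, and it is the standard H\"older/energy argument behind bounds of this shape. The following steps all check out and reproduce the stated bound exactly:
\begin{enumerate}[(i)]
\item the collapse to $u\equiv h\bar m\pmod q$;
\item H\"older with exponents $(2,4,4)$;
\item the count $\sum_u w(u)^2\ll (HM+(HM)^2/q)q^{o(1)}$, via $h_1m_2-h_2m_1=kq$ and the divisor bound;
\item the orthogonality step $\sum_{u \bmod q}|S(u)|^4\le q\,T(N)$, which is where $(a,q)=1$ enters;
\item the final algebra, where your ``$=$'' should be ``$\asymp$''.
\end{enumerate}

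Compared with the paper's bare citation, your route buys transparency. It isolates the single arithmetic input, the energy bound $T(N)\ll (N^{7/2}q^{-1/2}+N^2)q^{o(1)}$ for $N<q$. The same skeleton also gives Theorem \ref{Shp:T2}: replace that input by the averaged bound $T_q(N)\le (N^4/Q+N^2)Q^{4\kappa+o(1)}$, valid for all but $Q^{1-4\kappa+o(1)}$ moduli $q\sim Q$. That averaged bound is elementary. A nonzero value of $(n_1+n_2)n_3n_4-(n_3+n_4)n_1n_2$ has only $Q^{o(1)}$ divisors $q$. The equation $1/n_1+1/n_2=1/n_3+1/n_4$ has only $N^{2+o(1)}$ solutions, since $(an_3-b)(an_4-b)=b^2$ once $a/b$ is fixed.

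The price is that your Step 3 is not self-contained. Fixing $n_1+n_2$ and $n_3+n_4$ leaves you counting points of a two-variable quadratic congruence in an $N\times N$ box. You would need roughly $N^{3/2}q^{-1/2}+q^{o(1)}$ points per pair, on average over the $\asymp N^2$ pairs and uniformly in the modulus. Your sketch of ``a lattice-point argument or a sum--product style incidence bound'' does not indicate how to obtain this.

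The $N^{7/2}q^{-1/2}$ energy estimate for general $q$ is a genuine theorem (Bourgain--Garaev) and must be cited. So your proof ultimately rests on an external result, just as the paper's use of \cite{Shp} does. The hypothesis $N<q$ that you add is needed for that estimate. It holds in the paper's application, where $N=M_3\le x^{1/3}<q$.
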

Meanwhile, Theorem 1.2 of that paper gives a bound for a range of $q$.
\begin{theorem}[Shparlinski, 2019]\label{Shp:T2}
Let $\kappa>0$ be a fixed real number, and let $Q$ be sufficiently large.  For all but at most $Q^{1-4\kappa+o(1)}$ values of $q\in [Q,2Q]$,
\begin{gather}\label{goodq}\max_{(a,q)=1}|K_g(q,a)|\ll \left(HM+(HM)^\frac 34Q^\frac 14\right)\left(NQ^{-\frac 14}+N^\frac 12\right)Q^{\kappa+o(1)}.
\end{gather}
\end{theorem}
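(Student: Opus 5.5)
The plan is to reduce to a fourth moment of the one-variable sum over $n$, complete it, and count solutions of a congruence among inverses. On average over $q$ that count is much better than for an individual modulus, which is where the improvement from $N^{\frac78}Q^{-\frac18}$ in Theorem \ref{Shp:T1} to $NQ^{-\frac14}$ should come from.

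\textbf{Step 1 (merge $h$ and $m$).} Set $k=hm\le 4HM$ and $r(k)=\sum_{hm=k}g_1(h)g_2(m)$, so that $|r(k)|\le\tau(k)$. Writing
$$S_q(k)=\sum_{N\le n\le 2N}g_3(n)e\left(\frac{ak\bar n}{q}\right),$$
we have $K_g(q,a)=\sum_k r(k)S_q(k)$. Hölder's inequality with exponents $(4/3,4)$ then gives
$$|K_g|\le\Big(\sum_k|r(k)|^{4/3}\Big)^{3/4}\Big(\sum_{k\le 4HM}|S_q(k)|^4\Big)^{1/4}\ll (HM)^{\frac34}Q^{o(1)}\Big(\sum_{k\le 4HM}|S_q(k)|^4\Big)^{1/4}.$$

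\textbf{Step 2 (completion in $k$).} I extend the $k$-range to full residue classes mod $q$, covering $[1,4HM]$ by $\ll HM/q+1$ complete periods. Orthogonality then gives
$$\sum_{k\le 4HM}|S_q(k)|^4\ll\left(\frac{HM}{q}+1\right)q\,J(q),$$
where $J(q)=\#\{N\le n_i\le 2N:\ \bar n_1+\bar n_2\equiv\bar n_3+\bar n_4\pmod q\}$. The maximum over $a$ disappears here, because $a$ only permutes the classes of $k$. Plugging in, $K_g\ll\big(HM+(HM)^{\frac34}Q^{\frac14}\big)J(q)^{\frac14}Q^{o(1)}$. This already produces the first bracket, so it remains to show that
$$J(q)\ll (N^4Q^{-1}+N^2)Q^{4\kappa+o(1)}$$
for all but $Q^{1-4\kappa+o(1)}$ moduli.

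\textbf{Step 3 (averaging $J(q)$ over $q$).} Clearing inverses, the congruence becomes
$$q\mid F(n)=n_2n_3n_4+n_1n_3n_4-n_1n_2n_4-n_1n_2n_3 .$$
When $F(n)=0$, a short analysis should show the quadruple is essentially diagonal, e.g. $\{n_1,n_2\}=\{n_3,n_4\}$, up to a count of $N^{2+o(1)}$ via a divisor bound; these quadruples contribute $N^{2+o(1)}$ to every $J(q)$. When $F(n)\neq0$, we have $|F(n)|\ll N^3$, so $F(n)$ has $Q^{o(1)}$ divisors and hence
$$\sum_{q\sim Q}J^{\ne}(q)\ll N^{4}Q^{o(1)}.$$
Markov's inequality then shows that $J^{\ne}(q)>N^4Q^{-1}\cdot Q^{4\kappa+o(1)}$ for at most $Q^{1-4\kappa+o(1)}$ values of $q$. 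For every other $q$ we get $J(q)^{\frac14}\ll (NQ^{-\frac14}+N^{\frac12})Q^{\kappa+o(1)}$, which is \eqref{goodq}.

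\textbf{Main obstacle.} I expect the delicate point to be Step 3's treatment of the degenerate solutions $F(n)=0$: one must verify they are only $N^{2+o(1)}$ in number uniformly in $q$. A secondary concern is the completion in Step 2 when $HM<q$, where the $+1$ term gives the $(HM)^{\frac34}Q^{\frac14}$ contribution. There one should use a smooth majorant in $k$ rather than sharp cutoffs, so that no extra logarithms beyond $Q^{o(1)}$ appear.
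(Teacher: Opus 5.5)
\textbf{Gap in Step 1.} The paper gives no proof of this statement; it simply quotes it from Shparlinski, so your argument has to stand on its own. Your overall plan is sound: H\"older down to a fourth moment of the $n$-sum, then average the energy count $J(q)$ over $q$ via $q\mid F(n)$, the divisor bound and Markov. But Step 1 fails as written. The phase is $ah\bar m\bar n/q$, with $m$ inverted and $h$ not, so $e(ah\bar m\bar n/q)$ is not a function of the integer $k=hm$. Your $S_q(hm)$ carries the phase $ahm\bar n$ instead, so the identity $K_g=\sum_k r(k)S_q(k)$ is false.

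\textbf{Repair.} Group instead by the residue $\lambda\equiv h\bar m\pmod q$. Set $r(\lambda)=\sum_{h\bar m\equiv\lambda}g_1(h)g_2(m)$ and $S(\mu)=\sum_n g_3(n)e(\mu\bar n/q)$, so that $K_g(q,a)=\sum_{\lambda \bmod q} r(\lambda)S(a\lambda)$.

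Now $r(\lambda)$ is no longer pointwise divisor-bounded. For example, when $H\ge 2M$ the class $\lambda\equiv 1$ alone has $\asymp M$ representations $h=m$. So your $L^{4/3}$ bound needs an input you have not supplied: a second-moment (multiplicative energy) estimate
$$\sum_\lambda|r(\lambda)|^2\le\#\{h_1m_2\equiv h_2m_1 \pmod q\}\ll (HM)^{1+o(1)}(1+HM/q).$$
This follows by counting pairs $u=h_1m_2$, $v=h_2m_1\le 2HM$ with $u\equiv v\pmod q$ and applying the divisor bound to each. Interpolating,
$$\Big(\sum|r|^{4/3}\Big)^{3/4}\le\Big(\sum|r|\Big)^{1/2}\Big(\sum|r|^2\Big)^{1/4}\ll (HM)^{3/4}(1+HM/q)^{1/4}Q^{o(1)}.$$
Since $\lambda$ already runs over all residues mod $q$ and $(a,q)=1$, you get $\sum_\lambda|S(a\lambda)|^4=qJ(q)$ exactly. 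Your Step 2 completion, and the smooth-majorant worry, therefore disappear. You recover $K_g\ll\big(HM+(HM)^{3/4}q^{1/4}\big)J(q)^{1/4}Q^{o(1)}$ uniformly in $a$.

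\textbf{Step 3.} With that fix, Step 3 goes through. One correction: the degenerate set $1/n_1+1/n_2=1/n_3+1/n_4$ is not essentially diagonal; for instance $(10,15,12,12)$ lies in $[8,16]$. The count is still fine. Fix $n_1,n_2$ and write $1/n_1+1/n_2=s/t$ in lowest terms; then $(sn_3-t)(sn_4-t)=t^2$. This leaves at most $\tau(t^2)=N^{o(1)}$ pairs $(n_3,n_4)$, hence $N^{2+o(1)}$ degenerate quadruples in total. Finally, converting $N^{o(1)}$ into $Q^{o(1)}$ in the Markov step tacitly assumes $N\le Q^{O(1)}$.
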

It is the application of these Shparlinski results that gives us savings over the original \cite{FI03} paper and allows for our new result.

\section{Remarks}\label{remarks}

As noted above, the original Friedlander-Iwaniec idea behind these methods is to turn the function $\Lambda$ into a function that behaves like the $k$-fold divisor function $\tau_k$.  The ultimate goal would be to show that $\Lambda$ is very close to $\lambda'$, which acts like the binary divisor function $\tau$.  Since $\tau$ is equidistributed modulo $q$ for $q<x^{\frac 23-\varepsilon}$, it stands to reason that these methods might be used to find the distribution of primes modulo $q$ over a similar range of $q$ if one assumes Siegel zeroes.  Indeed, Theorems 2.2 and 2.3 of \cite{WrLM} prove that if $q<x^{\frac 23-\varepsilon}$, one has the correct upper bound for $\psi(x,q,a)$ (with $o(\frac{\pi(x)}{\phi(q)})$ error), and for a fixed $a$, one can prove that $$\psi(x,q,a)=\frac{1+o(1)}{\phi(q)}\pi(x)$$for most $q\sim Q$ if $Q<x^{\frac 23-\varepsilon}$.


We also note here that Theorem \ref{FIternary} is not the optimal known result for ternary divisor sums.  Indeed, Heath-Brown \cite{HB86} improved the exponent to $\frac 12+\frac{1}{82}$ for individual $q$ and $\frac 12+\frac{1}{42}$ over a range of $q$. Fouvry, Kowalski, and Michel \cite{FKM} later proved exponents of $\frac 12+\frac{1}{46}$ in the case that $q$ is prime and $\frac 12+\frac{1}{34}$ when $q$ is averaged over a fixed residue class, while Sharma \cite{Sha} raised the exponent to $\frac 12+\frac{1}{30}$ for individual $q$ in the case where $q$ is square-free or an odd prime power.  However, it is not yet known how to adapt \cite{HB86}, \cite{FKM}, or \cite{Sha} to more general divisor sums like ours.

By contrast, \cite{FI85} and \cite{Shp} adapt more easily to the introduction of $\chi$.  Applying $\chi$ to \cite{FI85} only costs us an additional multiple of $D$ on the error term (as noted above), while \cite{Shp} is actually stated such that one could insert any 1-bounded function into the sum with no change in the bound.

\section{Application to the Sachpazis paper}

Before we begin the proofs of Theorems \ref{MainSubThm1} and \ref{MainSubThm2}, it is worth mentioning how these results apply in the aforementioned paper of Sachpazis.  For $x=D^V$, let
$$z=D^{\min\{\sqrt{V/(\log \eta)},2\}}.$$
In (2.7) of that paper, the author finds that for any $q\leq x^{\frac 23-\varepsilon}$,
\begin{align}\label{tocon}
\begin{split}
\psi(x;q,a)-\frac{\psi(x)}{\phi(q)}(1-\mathds{1}_{D\mid q}\chi(a))=&\sum_{\substack{n\leq x\\n\equiv a\mod{q}\\P^-(n)>z}}\lambda'(n)-\frac{1-\mathds{1}_{D\mid q}\chi(a)}{\phi(q)}\sum_{\substack{n\leq x\\(n,q)=1\\P^-(n)>z}}\lambda'(n)\\
&+\frac{1-\mathds{1}_{D\mid q}\chi(a)}{\phi(q)}\sum_{\substack{k\ell\leq x,\,k>z\\(k\ell,q)=1\\P^-(k\ell)>z}}\lambda(k)\Lambda(\ell)\\
&-\sum_{\substack{k\ell\leq x,\,k>z\\k\ell\equiv a\mod{q}\\P^-(k\ell)>z}}\lambda(k)\Lambda(\ell)+O\bigg(\frac{z\log x}{\log z}\bigg).
\end{split}
\end{align}
The author then finds that 
\begin{align*}
\left|\sum_{\substack{n\leq x\\n\equiv a\mod{q}\\P^-(n)>z}}\lambda'(n)-\frac{1-\mathds{1}_{D\mid q}\chi(a)}{\phi(q)}\sum_{\substack{n\leq x\\(n,q)=1\\P^-(n)>z}}\lambda'(n)\right|\ll \frac{x}{\phi(q)}\left(\frac{V^{12}}{\eta}+\exp\left(-c_\varepsilon\sqrt{V\log \eta}\right)\right),
\end{align*}
and
$$\sum_{\substack{k\ell\leq x,\,k>z\\(k\ell,q)=1\\P^-(k\ell)>z}}\lambda(k)\Lambda(\ell)\ll \frac{xV^4}{\eta}+x\exp\Big(-c'\sqrt{V\log\eta}\Big).$$
For the remaining sum, Section 6 of that paper finds
\begin{align}\label{S2tfb}
\begin{split}
\sum_{\substack{k\ell\leq x,\,k>z\\k\ell\equiv a\mod{q}\\P^-(k\ell)>z}}\lambda(k)\Lambda(\ell)&\ll_{\varepsilon}\frac{x}{\phi(q)}\bigg(\frac{V^{16}}{\eta} +\exp\Big(-c_{\varepsilon}'\sqrt{V\log\eta}\Big)\bigg)+(\log x)\mathcal C,    
\end{split}
\end{align}
where
\begin{align*}
\mathcal C=\sum_{\substack{d_1,d_2,d_3\leq x^{\varepsilon/200}\\(d_1d_2d_3,q)=1}}w(d_1)\chi(d_1)w(d_2)w(d_3)\bigg(\sum_{\substack{k\ell\leq x/(d_1d_2d_3)\\k\ell\equiv a\bar{d_1d_2d_3}\mod{q}}}\lambda(k)-\frac{1}{\phi(q)}\!\!\sum_{\substack{k\ell\leq x/(d_1d_2d_3)\\(k\ell,q)=1}}\lambda(k)\bigg).
\end{align*}
From this, we can see that the allowable range of $q$ is determined by $\mathcal C$.  Theorem \ref{FIternary2} then allows for $q\leq x^{\frac{58}{115}-\varepsilon}$, whereas our new Theorems \ref{MainSubThm1} and \ref{MainSubThm2} allow for $q\leq x^{\frac{30}{59}-\varepsilon}$ and $Q\leq x^{\frac{16}{31}-\varepsilon}$, respectively.

\section{Partitioning the sum}

To prove Theorems \ref{MainSubThm1} and \ref{MainSubThm2}, it will be helpful to first turn
$$\sum_{\substack{dm\leq x \\ d>D^*\\dm\equiv a\pmod q}}\lambda(d)$$
into a ternary sum
$$\sum_{\substack{dm\leq x \\ d>D^*\\dm\equiv a\pmod q}}\lambda(d)=\mathop{\sum\sum\sum}\limits_{\substack{uvm\leq x \\ uv>D^* \\ muv\equiv a\pmod q }}\chi(v).$$
Our goal will be to show that 
\begin{gather}\label{goal:ternary}\mathop{\sum\sum\sum}\limits_{\substack{uvm\leq x \\ uv>D^* \\ muv\equiv a\pmod q }}\chi(v)=\frac{1+O(q^{-\alpha})}{\phi(q)}\mathop{\sum\sum\sum}\limits_{\substack{uvm\leq x \\ uv>D^* \\ muv\equiv a\pmod q }}\chi(v)\end{gather}
for some $\alpha>0$. 

We note that over the regions where any of $u$, $v$, or $m$ is larger than $Dq^{1+\alpha}$, the bound in (\ref{goal:ternary}) holds trivially, since one can simply apply the congruence condition to the large variable.  In other words,
$$\mathop{\sum\sum\sum}\limits_{\substack{uvm\leq x \\ uv>D^* \\ muv\equiv a\pmod q }}\chi(v)=\frac{1+O(q^{-\alpha})}{\phi(q)}\mathop{\sum\sum\sum}\limits_{\substack{uvm\leq x \\ uv>D^* \\ \max\{u,v,m\}>Dq^{1+\alpha} \\ muv\equiv a\pmod q }}\chi(v).$$
In the remaining intervals, the requirement that $uv>D^*$ is now redundant and can be ignored.

Similarly, if, say, $u<\frac{x}{q^{\frac 32+\alpha}}$ then $mv>q^{\frac 32+\alpha}$ and hence we can apply Theorem \ref{binaryTdivisor} to the sum over $v$ and $m$.  So
$$\mathop{\sum\sum\sum}\limits_{\substack{uvm\leq x \\ uv>D^* \\ muv\equiv a\pmod q }}\chi(v)=\frac{1+O(q^{-\alpha})}{\phi(q)}\mathop{\sum\sum\sum}\limits_{\substack{uvm\leq x \\ \min\{u,v,m\}<\frac{x}{q^{\frac 32+\alpha}} \\ \max\{u,v,m\}\leq Dq^{1+\alpha} \\ muv\equiv a\pmod q }}\chi(v).$$
Thus, we only need to consider the intervals where
$$u,v,m\in \bigg[\frac{x}{q^{\frac 32+\alpha}},Dq^{1+\alpha}\bigg).$$  
Define this interval to be $\mathcal I$.

For the remaining intervals, we partition the sum into subdyadic intervals, and we change the notation from denoting which variable has the character to denoting the size of the variables.  First, we break the interval $\mathcal I$ into dyadic intervals $[U,U(1+\zeta))$ for some fixed $0<\zeta\leq 1$, and then we break the dyadic intervals into subintervals $[M,M(1+\Delta))$ with $\Delta \ll \frac{1}{\log^8 x}$.    So we have triples of intervals $\mathcal M_1,\mathcal M_2, \mathcal M_3$ where each of the $\mathcal M_i=[M_i,M_i(1+\Delta))$ with  $M_1$, $M_2$, and $M_3$ such that $M_1M_2M_3\ll x$.  

Define $\mathcal M=(\mathcal M_1,\mathcal M_2, \mathcal M_3)$ to be such a triple, and define $\mathcal J$ to be the set of these triples of intervals.  From here, we will write
$$\mathop{\sum\sum\sum}\limits_{\substack{t_1,t_2,t_3\\ t_i \in \mathcal M_i\\ t_1t_2t_3\equiv a\pmod q }}f_1(t_1)f_2(t_2)f_3(t_3),$$
where one of the $f_i(t_i)=\chi(t_i)$ and the other $f_i(t_i)=1$.

Note that
$$\mathop{\sum\sum\sum}\limits_{\substack{(\mathcal M_1,\mathcal M_2, \mathcal M_3)\in \mathcal M \\ M_1M_2M_3\leq \frac{x}{\log^{12} x }}}\mathop{\sum\sum\sum}\limits_{\substack{t_1,t_2,t_3\\ t_i \in \mathcal M_i\\ t_1t_2t_3\equiv a\pmod q }}f_1(t_1)f_2(t_2)f_3(t_3)\ll \sum_{n\leq \frac{8x}{\log^{12} x }}\tau_3(n)\ll \frac{x}{\log^{10} x} .$$
Moreover, 
$$\mathop{\sum\sum\sum}\limits_{\substack{(\mathcal M_1,\mathcal M_2, \mathcal M_3)\in \mathcal M \\ M_1M_2M_3\leq \frac{x}{\log^{12} x }}}\mathop{\sum\sum\sum}\limits_{\substack{t_1,t_2,t_3 \\ t_i \in \mathcal M_i\\ t_1t_2t_3\equiv a\pmod q }}f_1(t_1)f_2(t_2)f_3(t_3)\ll \sum_{x<n\leq x+\frac{8x}{\log^{8} x }}\tau_3(n)\ll \frac{x}{\log^{6} x} .$$
So
$$\mathop{\sum\sum\sum}\limits_{\substack{uvm\leq x \\ u,v,m\in \mathcal I \\ muv\equiv a\pmod q }}\chi(v)=\mathop{\sum\sum\sum}\limits_{\substack{(\mathcal M_1,\mathcal M_2, \mathcal M_3)\in \mathcal M \\ \frac{x}{\log^{12} x}<M_1M_2M_3\leq x }}\mathop{\sum\sum\sum}\limits_{\substack{t_1,t_2,t_3 \\ t_i \in \mathcal M_i\\ t_1t_2t_3\equiv a\pmod q }}f_1(t_1)f_2(t_2)f_3(t_3)+O\left( \frac{x}{\log^{6} x}\right).$$
Without loss of generality, we will assume that
\begin{gather}\label{boundsform1m2m3}
\frac{x}{q^{\frac 32+\alpha}}\leq M_3\leq M_2\leq M_1\leq Dq^{1+\alpha}.
\end{gather}
For ease of notation, define
$$\mathcal D_{\mathcal M,f}=\mathop{\sum\sum\sum}\limits_{\substack{t_1,t_2,t_3 \\  t_i \in \mathcal M_i\\ t_1t_2t_3\equiv a\pmod q }}f_1(t_1)f_2(t_2)f_3(t_3),$$
where the $f$ denotes the order of the choices of $f_i$, and define
$$\mathcal D^*_{\mathcal M,f}=\mathop{\sum\sum\sum}\limits_{\substack{t_1,t_2,t_3 \\  t_1t_2t_3>x\\ t_i \in \mathcal M_i\\ (t_1t_2t_3,q)=1 }}f_1(t_1)f_2(t_2)f_3(t_3),$$
Our goal here is to bound 
$$\left|\mathcal D_{\mathcal M,f}-\frac{1}{\phi(q)}\mathcal D^*_{\mathcal M,f}\right|.$$
However, since one of the functions is $\chi$ and $\sum_{A\leq n\leq B}\chi(n)\ll D$ for any $A$ and $B$, we can see that trivially,
$$\left|\frac{1}{\phi(q)}\mathcal D^*_{\mathcal M,f}\right|\ll \frac{DM_1M_2}{\phi(q)},$$
which is much smaller than $\frac{x}{q^{1+\alpha}}$ for small values of $\alpha$.  Hence, it will suffice to bound $\left|\mathcal D_{\mathcal M,f}\right|.$

\section{The Friedlander-Iwaniec Bounds}
As we mentioned in Section \ref{remarks}, some of the ternary sum bounds in the literature are useful for the evaluation of $\mathcal D$ above, but others are not.  We will eventually need to show that the Shparlinski bounds can in fact be applied to a Kloosterman sum that arises from the estimation of $\left|\mathcal D_{\mathcal M,f}\right|$.

First, however, we summarize the results from Friedlander and Iwaniec \cite{FI03}.

\begin{lemma}  Let $\alpha$ be such that $0<\alpha<1/100$, and let $x^\alpha\leq q$.  Then for any small $\varepsilon>0$,
\begin{align*}\left|\mathcal D_{\mathcal M,f}\right|\ll & \left(\frac{x}{q^{1+\alpha}}+Dq^\frac 12\right)q^\varepsilon\\
&+D^2q^\alpha x^\varepsilon \left(q^\frac 14 M_1^{\frac 12}M_3^{\frac 12} + q^\frac 14M_3 + M_1^\frac 13M_3^\frac 13M_2^\frac 23+M_3^\frac 23M_2^\frac 23+q^{-1}M_2M_3\right).
\end{align*}
\end{lemma}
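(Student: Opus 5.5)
We follow the classical Friedlander--Iwaniec route for ternary sums. The plan is to detect the congruence in one variable with additive characters, separate the zero frequency, and reduce what remains to incomplete Kloosterman sums $K_\chi$. These are then estimated with the twisted bound of Theorem \ref{FI:2.6}.

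\textbf{Fourier expansion.} Fix the triple $\mathcal M$ and single out the variable $t_1\in\mathcal M_1$, the largest one. For fixed $t_2,t_3$ coprime to $q$, the condition $t_1\equiv a\overline{t_2t_3}\pmod q$ restricts $t_1$ to one residue class in an interval of length $\Delta M_1$. If $f_1=1$, we apply the smooth-cutoff or Erd\H{o}s--Tur\'an/Poisson-type expansion
$$\sum_{\substack{t_1\in\mathcal M_1\\ t_1\equiv b\ (q)}}1=\frac{|\mathcal M_1|}{q}+\frac1q\sum_{0<|h|\leq H}c_h\,e\Big(\frac{hb}{q}\Big)+O\Big(\frac{M_1}{H}\cdot\frac{1}{q}\Big)\quad\text{(suitably smoothed)},$$
with $H\approx q^{1+\alpha}/M_1$ and $|c_h|\ll M_1$.

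If $f_1=\chi$, we first split $t_1$ into classes modulo $D$ (or expand $\chi$ through Gauss sums). This produces the same structure, but modulo $qD$ with $t_1$ restricted to progressions. This costs a factor of roughly $D$ in the frequency range and amplitude, and it is where one of the two factors of $D$ in $D^2$ comes from. The other factor of $D$ is the one already built into Theorem \ref{FI:2.6} when $\chi$ sits on $t_2$ or $t_3$.

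\textbf{Zero frequency and boundary.} The zero frequency gives $\frac{1}{q}\sum_{t_2,t_3}f_2f_3\,|\mathcal M_1|$. When a character is present on $t_2$ or $t_3$, this is bounded by $DM_1M_2/q$. Combined with the coprimality bookkeeping (M\"obius over $(t_2t_3,q)$), it contributes at most $(x/q^{1+\alpha}+Dq^{1/2})q^\varepsilon$. Here the term $Dq^{1/2}$ absorbs the Gauss-sum and character-sum losses from the residue-class decomposition. The truncation error $M_1/(Hq)$ summed over $t_2,t_3$ gives $M_1M_2M_3/(Hq)\ll x/q^{2+\alpha}$, which is harmless. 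The condition $x^\alpha\le q$ lets us replace $x^\varepsilon$ by $q^{O(\varepsilon)}$ where convenient.

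\textbf{Nonzero frequencies.} These become
$$\frac{M_1}{q}\sum_{0<|h|\le H}\Big|\sum_{t_2\in\mathcal M_2}\sum_{t_3\in\mathcal M_3}f_2(t_2)f_3(t_3)e\Big(\frac{ah\overline{t_2t_3}}{q}\Big)\Big|,$$
after removing the weights $c_h$ by partial summation. Subdyadic boxes are handled by completing to $[0,M]$ ranges, at the cost of $\log$ factors. We now apply Theorem \ref{FI:2.6} with $(M,N)=(M_3,M_2)$, chosen so that $N$ is the larger of the two (the bound is asymmetric in $M$ and $N$), and with $H\approx Dq^{1+\alpha}/M_1$. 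The prefactor $\frac{M_1}{q}\cdot H\approx Dq^{\alpha}$ scales each term. For instance, $q^{3/4}H^{1/2}M^{1/2}\cdot\frac{M_1}{q}$ becomes roughly $Dq^{\alpha}q^{1/4}M_1^{1/2}M_3^{1/2}$, because $\frac{M_1}{q}H^{1/2}=M_1^{1/2}q^{-1/2+\alpha/2}D^{1/2}$. The other terms behave similarly: $q^{1/4}HM\to q^{1/4}M_3$, $q^{1/3}H^{2/3}M^{1/3}N^{2/3}\to M_1^{1/3}M_3^{1/3}M_2^{2/3}$, $HM^{2/3}N^{2/3}\to M_3^{2/3}M_2^{2/3}$, and $q^{-1}HMN\to q^{-1}M_2M_3$. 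Together these yield exactly the displayed bracket with prefactor $D^2q^\alpha x^\varepsilon$.

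\textbf{Main obstacle.} The delicate point is the case $f_1=\chi$: obtaining the completion with only an extra factor of $D$ and keeping the resulting Kloosterman sums of the form covered by Theorem \ref{FI:2.6}. We would try the splitting by residues modulo $D$ first, using $(q,D)$ considerations and the Chinese remainder theorem, before attempting anything more refined.
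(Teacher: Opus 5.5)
Your architecture is the paper's: complete the congruence in $t_1$, take $H\asymp Dq^{1+\alpha}/M_1$ (the paper uses $H=Dq^{1+\alpha}M_2M_3/x$), remove $c_q(h)$ by partial summation, apply Theorem \ref{FI:2.6} with $(M,N)=(M_3,M_2)$, and pay one more factor of $D$ for the twist. Your conversion of the five Kloosterman terms is correct.

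The gap is in the boundary (truncation) term. The intervals in $\mathcal D_{\mathcal M,f}$ are sharp. So for each $(t_2,t_3)$ the truncated expansion leaves the error $\varrho\bigl((B-a\overline{t_2t_3})/q\bigr)+\varrho\bigl((A-a\overline{t_2t_3})/q\bigr)$ of \eqref{congclass01}, where $\mathcal M_1=[A,B)$ and $\varrho(z)=\min(1,(H\|z\|)^{-1})$. This error is of size $\asymp 1$, not $O(M_1/(Hq))$, whenever $a\overline{t_2t_3}$ lies within about $q/H$ of an endpoint modulo $q$. A trigonometric polynomial of degree $H\ll q$ in $b/q$ cannot follow the unit jumps of the sharp count in $b$.

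Writing ``suitably smoothed'' does not remove the problem, because $\mathcal D_{\mathcal M,f}$ itself has sharp cutoffs. The cost of smoothing is governed by the number of $(t_2,t_3)$ for which $a\overline{t_2t_3}$ falls in a short window near the endpoints. Trivially that is only $\ll M_2M_3\approx x/M_1$, which is in general much larger than $x/q^{1+\alpha}$.

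What is missing is an equidistribution estimate for $a\overline{t_2t_3}$ modulo $q$. The paper takes it from (3.3) and Lemma 3.1 of \cite{FI85}, a Weil-bound input. This gives $\sum_{t_2,t_3}\varrho(\cdot)\ll (M_2M_3/H+q^{1/2}+M_2M_3/q)x^{\varepsilon}$, with $M_2M_3/H=x/(Dq^{1+\alpha})$. Multiplied by the extra $D$ for the twist, this is exactly the source of the term $(x/q^{1+\alpha}+Dq^{1/2})q^{\varepsilon}$ in the lemma, not Gauss-sum losses as you suggest.

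Even granting your pointwise bound, the bookkeeping fails. Summing $M_1/(Hq)$ over $t_2,t_3$ gives $x/(Hq)\asymp xM_1/(Dq^{2+\alpha})$ with your $H$, not $x/q^{2+\alpha}$; the factor $M_1$ was dropped. For $M_1$ near its upper limit $Dq^{1+\alpha}$ this is $\asymp x/q$, larger than the target $x/q^{1+\alpha}$.

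A smaller point: partial summation in $h$ should leave $\max_{y\le H}\bigl|\sum_{0<|h|\le y}\sum_{t_2}\sum_{t_3}f_2(t_2)f_3(t_3)e(ah\overline{t_2t_3}/q)\bigr|$, with the $h$-sum inside the absolute value as in the paper. Theorem \ref{FI:2.6} is stated for an unweighted $h$-sum, and your display, with $|\cdot|$ taken for each $h$ separately, is not of that form.
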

\begin{proof}
For a given $A$ and $B$, define
$$c_q(h)=\frac 1{q}\int_{A}^{B}e\left(\frac{hz}{q}\right)dz.$$
By (3.3) of \cite{FI85}, we have that for any $H$ with $1\leq H<q$,
\begin{align}\label{3.3}\left|\mathop{\sum\sum\sum}\limits_{\substack{t_1,t_2,t_3\\ t_i \in \mathcal M_i\\ t_1t_2t_3\equiv a\pmod q }}1-\frac{1}{\phi(q)}\mathop{\sum\sum\sum}\limits_{\substack{t_1,t_2,t_3\\ t_i \in \mathcal M_i\\ (t_1t_2t_3,q)=1 }}1\right|
\ll \left(\frac{M_2M_3}{H}+q^\frac 12+G(H,\mathcal M_2,M_3)\right)q^\varepsilon,
\end{align}
where
$$G(H,\mathcal M_2,M_3)=\max_{\substack{a\\(a,q)=1}}\left|\sum_{1\leq |h|\leq H}c(h)\mathop{\sum\sum}\limits_{t_2\in \mathcal M_2,t_3\in \mathcal M_3}\mathop{\sum\sum}\limits_{t_2\in \mathcal M_2,t_3\in \mathcal M_3}e\left(-\frac{a\overline{t_2} \overline{t_3} h}{q}\right)\right|.$$
We take $$H=\frac{Dq^{1+\alpha}M_2M_3}{x}.$$
Since $M_1\leq Dq^{1+\alpha}$, our choice of $H$ is clearly greater than 1.  We can also say that
\begin{gather}\label{HM1}
H\ll \frac{Dq^{1+\alpha}}{M_1}.
\end{gather}
From the proof of Proposition 2 in \cite{FI85}, the authors note that by Abel's partial summation,
\begin{align*}&\left|\sum_{1\leq |h|\leq H}c(h)\mathop{\sum\sum}\limits_{t_2\in \mathcal M_2,t_3\in \mathcal M_3}\mathop{\sum\sum}\limits_{t_2\in \mathcal M_2,t_3\in \mathcal M_3}e\left(-\frac{a\overline{t_2} \overline{t_3} h}{q}\right)\right|\\
&\ll q^{-1}M_1\left|\sum_{1\leq |h|\leq H}\mathop{\sum\sum}\limits_{t_2\in \mathcal M_2,t_3\in \mathcal M_3}e\left(-\frac{a\overline{t_2} \overline{t_3} h}{q}\right)\right|+Hq^{-2}M_1^2\max_{1\leq y\leq H}\left|\sum_{1\leq |h|\leq y}\mathop{\sum\sum}\limits_{t_2\in \mathcal M_2,t_3\in \mathcal M_3}e\left(-\frac{a\overline{t_2} \overline{t_3} h}{q}\right)\right|\\
&\ll Dq^{-1+\alpha}M_1\max_{1\leq y\leq H}\left|\sum_{1\leq |h|\leq y}\mathop{\sum\sum}\limits_{t_2\in \mathcal M_2,t_3\in \mathcal M_3}e\left(-\frac{a\overline{t_2} \overline{t_3} h}{q}\right)\right|,
\end{align*}
by (\ref{HM1}).

By Theorem \ref{FI:2.6}, we can then bound the expression above, taking $H$ as defined, $M=M_3$, and $N=M_2$.  This gives
\begin{align*}
G(H,\mathcal M_2,M_3)\ll & Dq^{-1+\alpha}M_1x^\varepsilon \left(q^\frac 34 H^{\frac 12}M_3^{\frac 12} + q^\frac 14HM_3 + q^\frac 13H^\frac 23M_3^\frac 13M_2^\frac 23+HM_3^\frac 23M_2^\frac 23+q^{-1}HM_2M_3\right).
\end{align*}
Invoking (\ref{HM1}) again, we have
\begin{align}\label{Gbegin}
G(H,\mathcal M_2,M_3)\ll & Dx^\varepsilon q^\alpha\left(q^\frac 14 M_1^{\frac 12}M_3^{\frac 12} + q^\frac 14M_3 + M_1^\frac 13M_3^\frac 13M_2^\frac 23+M_3^\frac 23M_2^\frac 23+q^{-1}M_2M_3\right).
\end{align}
Hence
\begin{align*}&\left|\mathop{\sum\sum\sum}\limits_{\substack{t_1,t_2,t_3\\ t_i \in \mathcal M_i\\ t_1t_2t_3\equiv a\pmod q }}1-\frac{1}{\phi(q)}\mathop{\sum\sum\sum}\limits_{\substack{t_1,t_2,t_3\\ t_i \in \mathcal M_i\\ (t_1t_2t_3,q)=1 }}1\right|\\
&\ll \frac{x}{Dq^{1+\alpha}}+q^{\frac 12+\varepsilon}+Dq^\alpha x^\varepsilon \left(q^\frac 14 M_1^{\frac 12}M_3^{\frac 12} + q^\frac 14M_3 + M_1^\frac 13M_3^\frac 13M_2^\frac 23+M_3^\frac 23M_2^\frac 23+q^{-1}M_2M_3\right).
\end{align*}
By the observations in the adaptation cited in the proof in \cite[Section 5]{FI03}, we can apply this bound to $$\left|\mathcal D_{\mathcal M,f}-\frac{1}{\phi(q)}\mathcal D^*_{\mathcal M,f}\right|$$
as well with only the gain of an additional factor of $D$. The lemma then follows.
\end{proof}
We can simplify this bound as follows.
\begin{lemma}\label{G:FI} For $0<\alpha<1/100$, let $x^\frac 12\leq D^3q<x^{\frac{8}{15}-\alpha}$.  Then for any $\varepsilon$ with $0<\varepsilon<\alpha/2$,
$$\left|\mathcal D_{\mathcal M,f}\right|\ll D^3x^\varepsilon q^{\frac 14+\alpha} \left(\frac{x}{M_2}\right)^{\frac 12}+\frac{x}{q^{1+\alpha}}.$$
\end{lemma}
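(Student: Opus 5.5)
The plan is to start from the bound of the preceding lemma and show that each of its terms is dominated by one of the two terms $D^3x^\varepsilon q^{\frac 14+\alpha}(x/M_2)^{\frac 12}$ or $x/q^{1+\alpha}$. The only inputs are the constraints on the triple $\mathcal M$:
\begin{itemize}
\item the ordering (\ref{boundsform1m2m3}), namely $x/q^{\frac 32+\alpha}\le M_3\le M_2\le M_1\le Dq^{1+\alpha}$;
\item the size condition $x/\log^{12}x<M_1M_2M_3\le x$, so that $M_1M_3\asymp (x/M_2)$ up to logarithms, which are absorbed into $x^\varepsilon$.
\end{itemize}

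First, the easy terms.
\begin{itemize}
\item The leading term satisfies $q^{\frac 14}M_1^{\frac 12}M_3^{\frac 12}\ll x^{\varepsilon} q^{\frac 14}(x/M_2)^{\frac 12}$. This is the main term of the claimed bound; the extra factor of $D$ in $D^3$ gives room to spare.
\item Since $M_3\le M_1$, we have $q^{\frac 14}M_3\le q^{\frac 14}(M_1M_3)^{\frac 12}$, so the second term is absorbed into the first.
\item Since $M_3\le M_1$, the fourth term $M_3^{\frac 23}M_2^{\frac 23}$ is at most the third, $M_1^{\frac 13}M_3^{\frac 13}M_2^{\frac 23}$.
\item For the fifth term, $M_2M_3\le x^{\frac 23}$, so $D^2q^{\alpha-1}x^{\frac 23+\varepsilon}\ll x/q^{1+\alpha}$ because $D$ and $q^{2\alpha}$ are tiny compared to $x^{1/3}$.
\item Next, $(x/M_2)^{1/2}\ge x^{1/4}$ since $M_2\le x^{1/2}$, and $q\ll x^{8/15}$. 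Hence $Dq^{\frac 12+\varepsilon}\ll D^3x^\varepsilon q^{\frac 14+\alpha}(x/M_2)^{\frac 12}$.
\item The term $\frac{x}{q^{1+\alpha}}q^\varepsilon$ is handled by running the previous lemma with $\alpha$ replaced by a slightly larger value, say $\alpha+\varepsilon$. This is harmless because $\varepsilon<\alpha/2$ and the hypotheses have slack.
\end{itemize}

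The real work, and the step I expect to be the main obstacle, is the third term. Using the size condition, it becomes $D^2q^\alpha x^\varepsilon M_1^{\frac 13}M_3^{\frac 13}M_2^{\frac 23}\approx D^2q^\alpha x^{\frac 13+\varepsilon}M_2^{\frac 13}$. I will split into two cases at the threshold $M_2=q^{\frac{3}{10}}x^{\frac 15}$.
\begin{itemize}
\item If $M_2\le q^{\frac 3{10}}x^{\frac 15}$, then $x^{\frac 13}M_2^{\frac 13}\le q^{\frac 14}x^{\frac 12}M_2^{-\frac 12}$, since this is equivalent to $M_2^{\frac 56}\le q^{\frac 14}x^{\frac 16}$. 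So the term is dominated by the main term.
\item If $M_2>q^{\frac 3{10}}x^{\frac 15}$, I instead compare with $x/q^{1+\alpha}$. The ratio is roughly $D^2q^{2\alpha}x^{-\frac 23+\varepsilon}M_2^{\frac 13}q$, so it suffices to have $M_2\ll x^{2-3\varepsilon'}q^{-3-6\alpha}D^{-6}$.
\end{itemize}

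To close the second case, bound $M_2$ from the ordering: $M_2^2\le M_1M_2\le x/M_3\le q^{\frac 32+\alpha}$, so $M_2\le q^{\frac 34+\frac\alpha 2}$. Then the requirement becomes $q^{\frac{15}{4}}\ll x^2$ up to small powers of $q^\alpha$, $D$ and $x^\varepsilon$, i.e. $q\ll x^{\frac{8}{15}}$. This is exactly where the hypothesis $D^3q<x^{\frac 8{15}-\alpha}$ enters: the $-\alpha$ in the exponent absorbs the powers $q^{O(\alpha)}$, $D^{O(1)}$ and $x^{\varepsilon}$, using $\varepsilon<\alpha/2$. The lower bound $x^{\frac12}\le D^3q$ is used only to guarantee $x^\alpha\le q$ for the previous lemma and to keep $H\ge 1$ there.

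The delicate point is the bookkeeping of the small exponents in the second case, checking that the losses $D^6q^{O(\alpha)}x^{O(\varepsilon)}$ really fit inside the $x^{-\alpha}$ margin. If the margin turns out too thin, I would instead replace the case split by the interpolation $\min(A,B)\le A^{\theta}B^{1-\theta}$ with the optimal $\theta$, applied to the two comparisons above. Collecting all cases then gives the stated bound.
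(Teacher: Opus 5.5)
Your route is the paper's. You start from the preceding lemma, dispose of the easy terms, and control the critical term through $M_1^{1/3}M_3^{1/3}M_2^{2/3}\le x^{1/3}M_2^{1/3}$ and $M_2\le (M_1M_2)^{1/2}\le q^{3/4+\alpha/2}$, comparing the result with $x/q^{1+\alpha}$; this is exactly where $q<x^{8/15}$ comes from. Your case split at $M_2=q^{3/10}x^{1/5}$ is harmless but unnecessary, because your second-case argument never uses $M_2>q^{3/10}x^{1/5}$; the paper simply applies it for all $M_2$. The other choices are fine and more careful than the paper: absorbing $q^{1/4}M_3$ into the first term and $M_3^{2/3}M_2^{2/3}$ into the third, and removing the stray $q^{\varepsilon}$ on $x/q^{1+\alpha}$ by shifting $\alpha$.

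The step that fails as written is your closing claim that the $-\alpha$ in the hypothesis absorbs all losses ``using $\varepsilon<\alpha/2$''.
\begin{itemize}
\item The requirement is $D^2q^{5/4+13\alpha/6}x^{\varepsilon}\ll x^{2/3}$.
\item The hypothesis gives $D^2q^{5/4}\le (D^3q)^{5/4}<x^{2/3-5\alpha/4}$.
\item But $q^{13\alpha/6}$ may be as large as $x^{52\alpha/45}$, which leaves only about $x^{17\alpha/180}$ of room.
\item That room does not absorb $x^{\varepsilon}$ for $\varepsilon$ near $\alpha/2$, and your shift $\alpha\to\alpha+\varepsilon$ costs a further $q^{\varepsilon}$.
\end{itemize}
Your interpolation fallback cannot help. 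At the extremal configuration $M_1=M_2\approx q^{3/4}$, $M_3\approx x/q^{3/2}$, the main term is about $x^{13/30}$, which is below $x/q^{1+\alpha}\approx x^{7/15}$. Meanwhile the critical term is increasing in $M_2$ and is already about $x^{7/15}$ there. So no weighted mean of the two target terms beats the direct comparison.

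The repair is to note that the $\varepsilon$ of the preceding lemma is independent of the one in the statement. Apply that lemma, with $\alpha$ shifted to $\alpha+\varepsilon'$, taking $\varepsilon'=\min(\varepsilon/2,\alpha/20)$. Then every comparison closes under the stated hypothesis $D^3q<x^{8/15-\alpha}$. The paper's own proof avoids this issue only by quietly requiring $D^3q<x^{8/15-5\alpha}$ at this point.
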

\begin{proof}
In this case,
$$\frac{x}{q^{1+\alpha}}\gg q^{\frac 12+\varepsilon}.$$
So we need only handle the long parenthesized section of the bound.


Using (\ref{HM1}) and the bounds that $M_3\leq x^\frac 13$, $M_2M_3\leq x^\frac 23$, and $M_2\leq \sqrt{M_1M_2}\leq q^\frac 34$, we have
\begin{align*}
D^2q^\alpha & x^\varepsilon \left(q^\frac 14 M_1^{\frac 12}M_3^{\frac 12} + q^\frac 14M_3 + M_1^\frac 13M_3^\frac 13M_2^\frac 23+M_3^\frac 23M_2^\frac 23+q^{-1}M_2M_3\right)\\
\ll & D^3x^\varepsilon q^\alpha\left(q^\frac 14 \left(\frac{x}{M_2}\right)^{\frac 12} + q^\frac 14x^\frac 13 + x^\frac 13q^\frac 14+x^\frac 49+q^{-1}x^\frac 23\right).
\end{align*}
All of the terms inside the parentheses except possibly the first term are clearly less than $\frac{x}{D^3q^{1+3\alpha}}$ as long as $D^3q<x^{\frac{8}{15}-5\alpha}$.  Since we can take any small $\varepsilon$, we will assume $\varepsilon<\frac {\alpha}{2}$ as in the statement of the lemma.  Hence, the above is
\begin{align*}
\ll & D^3x^\varepsilon q^{\frac 14 +\alpha}x^\frac 12M_2^{-\frac 12}+\frac{x}{q^{1+\alpha}}.
\end{align*}
\end{proof}

\section{Prelude to the Shparlinski results}

It remains now to show that Theorems \ref{Shp:T1} and \ref{Shp:T2} can also be used to bound $|\mathcal D_{\mathcal M,f}|$.

To begin, we work through the steps of Section 3 of \cite{FI85}.  By (3.2) of \cite{FI85}, for any $q$ and any $H$ with $0<H<q$,
\begin{gather}\label{congclass01}\sum_{\substack{A\leq m\leq B \\ m\equiv a\bar r \pmod{q}}}1=\frac{B-A}{q}+\sum_{0<|h|\leq H}c_q(h)e\left(-\frac{a\bar r h}{q}\right)+O\left(\varrho\left(\frac{B-a\bar r}{q}\right)+\varrho\left(\frac{A-a\bar r}{q}\right)\right),\end{gather}
where for any real number $z$,
$$\varrho(z)=\min(1,(H||z||)^{-1})$$
and $||\cdot ||$ indicates distance to the nearest integer.  Lemma 3.1 of \cite{FI85} states that for any $V\geq 1$ and any $\varepsilon>0$,
\begin{gather}\label{boundforvarrho}\sum_{\substack{K'\leq k\leq K'+K \\ (k,q)=1}}\varrho\left(\frac{V-a\bar k}{q}\right)\ll \left(\frac{C'}{H}+q^\frac 12+\frac C{q}\right)x^{\varepsilon}.
\end{gather}
This of course means that for double sums, we have a similar bound, as the nonnegativity of $\varrho$ gives
\begin{align*}\mathop{\sum\sum}\limits_{\substack{J'\leq j\leq J'+J\\K'\leq k\leq K'+K  \\ (jk,q)=1}}\varrho\left(\frac{V-a\bar j\bar k}{q}\right)\ll & \sum_{J'K'\leq n\leq J'K'+2J'K+2JK'}\tau(n)\varrho\left(\frac{V-a\bar n}{q}\right)\\
\ll &x^\varepsilon\sum_{J'K'\leq n\leq J'K'+2J'K+2JK'}\varrho\left(\frac{V-a\bar n}{q}\right)\\
\ll &\left(\frac{J'K'}{H}+q^\frac 12+\frac{J'K'}{q}\right)x^{\varepsilon}.
\end{align*}
Note that this bound also holds for 
\begin{align*}\mathop{\sum\sum}\limits_{\substack{J'\leq j\leq J'+J\\K'\leq k\leq K'+K  \\ (jk,q)=1}}g_1(j)g_2(j)\varrho\left(\frac{V-a\bar j\bar k}{q}\right)
\end{align*}
for 1-bounded functions $g_1$ and $g_2$.

We then prove the following.
\begin{lemma}\label{ternary} Let $0<\alpha<1/100$, and let $H=\frac{Dq^{1+\alpha}M_2M_3}{x}$.  Then for any $\varepsilon$ with $0<\varepsilon<\alpha/4$,
$$\mathop{\sum\sum\sum}\limits_{\substack{t_j\in \mathcal M_j\\ t_1t_2t_3\equiv a\pmod q }}\chi(t_3)=\mathop{\sum\sum}\limits_{t_2\in \mathcal M_2,t_3\in \mathcal M_3}\chi(t_3)\sum_{0<|h|\leq H}c_q(h)e\left(-\frac{a\overline{t_2} \overline{t_3} h}{q}\right)+O\left(\left(q^{\frac 12+\alpha}+\frac{x}{q^{1+\alpha}}\right)x^\varepsilon\right),$$
$$\mathop{\sum\sum\sum}\limits_{\substack{t_j\in \mathcal M_j\\ t_1t_2t_3\equiv a\pmod q }}\chi(t_2)=\mathop{\sum\sum}\limits_{t_2\in \mathcal M_2,t_3\in \mathcal M_3}\chi(t_2)\sum_{0<|h|\leq H}c_q(h)e\left(-\frac{a\overline{t_2} \overline{t_3} h}{q}\right)+O\left(\left(q^{\frac 12+\alpha}+\frac{x}{q^{1+\alpha}}\right)x^\varepsilon\right),$$
and
\begin{align*}&\mathop{\sum\sum}\limits_{t_2,t_3\in \mathcal M_2,\mathcal M_3}\sum_{\substack{M_1\leq t_1\leq M_1+M_1' \\ t_1\equiv a\bar t_2\bar t_3 \pmod{q}\\ t_1\equiv v\pmod D}}\chi(t_1)\\
&=\sum_{0<v<D}\frac{\chi(D)}{\phi(J)}\sum_{\chi' \pmod J}\mathop{\sum\sum}\limits_{\substack{t_2,t_3\in \mathcal M_2,\mathcal M_3 } }\chi'(t_2)\chi'(t_3)\overline{\chi'(v)}\sum_{0<|h|\leq H}c_q(h)e\left(-\frac{a\bar t_2\bar t_3h}{q}\right)e\left(-\frac{vh}{D}\right)\\
&\phantom{=}+O\left(\left(D^\frac 32q^{\frac 12+\alpha}+\frac{x}{q^{1+\alpha}}\right)x^\varepsilon\right).
\end{align*}
In particular, for all of these cases, if $D^2q<x^{\frac 23-4\alpha}$, then there exist 1-bounded functions $g_1$, $g_2$, and $g_3$ such that
\begin{gather}\label{chbound}\left|\mathcal D_{\mathcal M,f}\right|\ll D\left|\mathop{\sum\sum}\limits_{\substack{t_2,t_3\in \mathcal M_2,\mathcal M_3 \\ (t_2t_3,q)=1 }}\sum_{0<|h|\leq H}g_1(h)g_2(t_2)g_3(t_3)c_q(h)e\left(-\frac{a\overline{t_2} \overline{t_3} h}{q}\right)\right|+\frac{x}{q^{1+\frac \alpha 2}}.
\end{gather}

\end{lemma}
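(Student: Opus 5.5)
We follow Section 3 of \cite{FI85}: apply the congruence condition to the longest variable $t_1$ and expand its indicator with the truncated Fourier expansion (\ref{congclass01}), using the cutoff $H=\frac{Dq^{1+\alpha}M_2M_3}{x}$. The three displayed identities correspond to where the character sits. It may sit on $t_3$ or $t_2$ (the short variables), or on $t_1$ itself.

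\textbf{Character on $t_2$ or $t_3$.} Here $t_1$ carries the weight $1$. For fixed $t_2,t_3$ with $(t_2t_3,q)=1$, we count $t_1\in\mathcal M_1$ with $t_1\equiv a\overline{t_2t_3}\pmod q$ by (\ref{congclass01}), taking $r=t_2t_3$. We then multiply by $\chi(t_3)$ or $\chi(t_2)$ and sum over $t_2,t_3$. This produces three pieces:
\begin{enumerate}[(i)]
\item the exponential sum displayed in the lemma;
\item a main term $\frac{M_1'}{q}\sum_{t_2,t_3}\chi(t_j)$, which is $\ll DM_1M_{k}/q$ by the P\'olya-type bound $\sum\chi\ll D$ over an interval, and hence is absorbed into $x/q^{1+\alpha}$;
\item the $\varrho$-terms.
\end{enumerate}
For (iii), the weights $\chi$ are $1$-bounded and $\varrho\geq0$, so we may drop $\chi$ and apply the double-sum version of (\ref{boundforvarrho}) noted just before the lemma. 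This gives $\ll(\frac{M_2M_3}{H}+q^{1/2}+\frac{M_2M_3}{q})x^{\varepsilon}$. With our choice of $H$, the first term is $x/(Dq^{1+\alpha})$ and the last is small, so (iii) contributes $\ll (q^{1/2}+x/q^{1+\alpha})x^\varepsilon$.

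\textbf{Character on $t_1$.} Here we first split $t_1$ by its residue $v\pmod D$, writing $\chi(t_1)=\chi(v)$. The conditions $t_1\equiv a\overline{t_2t_3}\pmod q$ and $t_1\equiv v\pmod D$ then combine by CRT into one congruence modulo $[q,D]$. When $(q,D)>1$, compatibility is detected with characters modulo $J=(q,D)$; this is the source of the $\chi'$-sum and the factor $1/\phi(J)$. Applying (\ref{congclass01}) with modulus $qD$ (or $[q,D]$) produces the extra phase $e(-vh/D)$. Summing the $\varrho$-errors over the $D$ classes $v$ with the enlarged modulus costs the factor $D^{3/2}$, since $(qD)^{1/2}D$ is, after the normalisations, of size $D^{3/2}q^{1/2}$.

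\textbf{Deducing (\ref{chbound}).} In every case the frequency sum is $\sum_{0<|h|\leq H}c_q(h)(\cdots)$. In the third case the factors $\overline{\chi'(v)}e(-vh/D)$ and $\chi'(t_2)\chi'(t_3)$ are $1$-bounded in $h$, $t_2$ and $t_3$ respectively; the sums over $v$ and $\chi'$ have total size $\ll D$, which accounts for the factor $D$. We therefore fix the worst $v$ and $\chi'$ and absorb these weights into $g_1,g_2,g_3$.

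The error terms must then be below $x/q^{1+\alpha/2}$. With $\varepsilon<\alpha/4$, this requires $D^{3/2}q^{1/2+\alpha}x^\varepsilon\ll x/q^{1+\alpha/2}$, which follows from $D^2q<x^{2/3-4\alpha}$. Indeed, that hypothesis gives $q^{3/2}D^{3/2}\le (D^2q)^{3/2}<x^{1-6\alpha}$.

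The main obstacle I expect is the third case. One must carry out the CRT/character bookkeeping carefully when $(q,D)>1$, and check that the $\varrho$-error, summed over residues $v$, truly costs only $D^{3/2}$ rather than $D^2$. My first approach would be to apply (\ref{boundforvarrho}) directly with modulus $qD$ and bound the sum over $v$ trivially.
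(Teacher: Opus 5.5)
Your outline follows the paper's proof step for step: Fourier expansion of the $t_1$-congruence via (\ref{congclass01}) with the given $H$; P\'olya for the zero-frequency term and the double-sum $\varrho$-bound when $\chi$ sits on $t_2$ or $t_3$; CRT plus characters mod $J$ and the phase $e(-vh/D)$ when $\chi$ sits on $t_1$; and fixing the worst $v,\chi'$ with $g_1(h)=e(-vh/D)$, $g_2=g_3=\chi'$. However, in the third case you account only for the frequency sum and the $\varrho$-errors. The zero-frequency term of (\ref{congclass01}) with modulus $[q,D]=q'D$ also survives:
\[
\frac{M_1'}{q'D}\sum_{t_2\in\mathcal M_2,\,t_3\in\mathcal M_3}\ \sum_{\substack{0<v<D\\ v\equiv a\overline{t_2t_3}\ (\mathrm{mod}\ J)}}\chi(v).
\]
This term does not appear on the right-hand side of the third identity. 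Bounded trivially it is $\ll |\mathcal M_1||\mathcal M_2||\mathcal M_3|/q$, i.e.\ of order $x/q$ up to powers of $\log x$, which is far above the permitted $x/q^{1+\alpha}$. The P\'olya argument from the first two cases does not apply verbatim, because the character now sits on $v$ rather than on $t_2$ or $t_3$.

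The missing idea is cancellation in the $v$-sum. The paper writes $v=v_1J+v_2$ and $t_2=r_1J+r_2$. Then $(v_1,r_2)$ runs over a complete residue system mod $D$, so the character sum vanishes apart from incomplete blocks in $t_2$. This gives $O(D^2M_3)$ for the double sum and $O(D^2M_1M_3/q)$ overall, which is admissible because $M_2\ge M_3\ge x/q^{3/2+\alpha}$ is much larger than $D^2q^{\alpha}$. Equivalently, split by the size of $J$. For $J<D$ the inner sum is $\chi$ summed over a full class mod $J$ inside $\mathbb{Z}/D\mathbb{Z}$, which is $0$ because $\chi$ is primitive. For $J=D$ it equals $\chi(a)\overline{\chi}(t_2t_3)$, and P\'olya in $t_2$ gives $O(DM_3)$.

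With this step added, the rest of your plan goes through as in the paper. Your check that the $D^{3/2}q^{1/2+\alpha}x^{\varepsilon}$ error is below $x/q^{1+\alpha/2}$ under $D^2q<x^{2/3-4\alpha}$ is correct. Even a factor $D^2$ there would be absorbed, so the $D^{3/2}$-versus-$D^2$ issue you flag is not the real obstacle.
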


\begin{proof}
We begin with the sum where $t_3$ is twisted by a character.  Recalling equation (\ref{congclass01}):
\begin{align*}&\mathop{\sum\sum}\limits_{t_2,t_3\in \mathcal M_2,\mathcal M_3}\sum_{\substack{M_1\leq t_1\leq M_1+M_1' \\ t_1\equiv a\bar t_2\bar t_3 \pmod{q}}}\chi(t_3)\\
&=\mathop{\sum\sum}\limits_{t_2,t_3\in \mathcal M_2,\mathcal M_3}\chi(t_3)\left[\frac{M_1'}{q}+\sum_{0<|h|\leq H}c_q(h)e\left(-\frac{a\bar t_2\bar t_3 h}{q}\right)+O\left(\varrho\left(\frac{M_1+M_1'-a\bar t_2\bar t_3}{q}\right)+\varrho\left(\frac{M_1-a\bar t_2\bar t_3}{q}\right)\right)\right]\\
&=O\left(\frac{M_1M_2D}{q}\right)+\mathop{\sum\sum}\limits_{t_2,t_3\in \mathcal M_2,\mathcal M_3}\chi(t_3)\sum_{0<|h|\leq H}c_q(h)e\left(-\frac{a\bar t_2\bar t_3 h}{q}\right)+O\left(\left(\frac{M_2M_3}{H}+q^\frac 12+\frac{M_2M_3}{q}\right)x^\varepsilon\right).
\end{align*}
By definition of $H$, $\frac{M_2M_3}{H}\ll \frac{x}{Dq^{1+\alpha}}$.  So $q^{\frac 12+\alpha}x^\varepsilon+\frac{x^{1+\varepsilon}}{Dq^{1+\alpha}}$ dominates the big-O terms, and hence
\begin{align*}&\mathop{\sum\sum}\limits_{t_2,t_3\in \mathcal M_2,\mathcal M_3}\sum_{\substack{M_1\leq t_1\leq M_1+M_1' \\ t_1\equiv a\bar t_2\bar t_3 \pmod{q}}}\chi(t_3)=\mathop{\sum\sum}\limits_{t_2,t_3\in \mathcal M_2,\mathcal M_3}\chi(t_3)\sum_{0<|h|\leq H}c_q(h)e\left(-\frac{a\bar t_2\bar t_3 h}{q}\right)+O\left(\left(q^{\frac 12+\alpha}+\frac{x}{q^{1+\alpha}}\right)x^\varepsilon\right).
\end{align*}
The proof for the sum where $t_2$ is twisted by a character is nearly identical.

For the sum where $t_1$ is twisted by a character,
$$\mathop{\sum\sum\sum}\limits_{\substack{t_j\in \mathcal M_j\\ t_1t_2t_3\equiv a\pmod q }}\chi(t_1)=\sum_{0<v<D}\chi(v)\mathop{\sum\sum}\limits_{\substack{t_2,t_3\in \mathcal M_2,\mathcal M_3 \\ (t_2t_3,q)=1 }}\mathop{\sum}\limits_{\substack{t_j\in \mathcal M_j\\ t_1\equiv a\overline{t_2t_3}\pmod q \\ t_1\equiv v\pmod D}}1.$$
Let  $J=(q,D)$, $q'=q/J$, and $D'=D/J$.  Note that by the Chinese Remainder Theorem, if $m\equiv a\pmod q$ and $m\equiv b\pmod D$ and $a\equiv b\pmod J$ then $m\equiv aD'+bq'\pmod{q'D}$.  Applying (\ref{congclass01}) then gives
\begin{align*}&\sum_{0<v<D}\chi(v)\mathop{\sum\sum}\limits_{t_2,t_3\in \mathcal M_2,\mathcal M_3}\sum_{\substack{M_1\leq t_1\leq M_1+M_1' \\ t_1\equiv a\bar t_2\bar t_3 \pmod{q}\\ t_1\equiv v\pmod D}}\chi(t_1)\\
&=\sum_{0<v<D}\chi(v)\mathop{\sum\sum}\limits_{\substack{t_2,t_3\in \mathcal M_2,\mathcal M_3 \\ t_2t_3\equiv v\pmod{J}}}\left[\frac{M_1'}{q'D}+\sum_{0<|h|\leq H}c_{q'D}(h)e\left(-\frac{(a\bar t_2\bar t_3D' +vq')h}{Dq'}\right)\right]\\
&\phantom{=}+\sum_{0<v<D}\chi(v)\mathop{\sum\sum}\limits_{\substack{t_2,t_3\in \mathcal M_2,\mathcal M_3 \\ t_2t_3\equiv v\pmod J}}\left[O\left(\varrho\left(\frac{M_1+M_1'-(aD'\bar t_2\bar t_3+D'v)}{Dq'}\right)+\varrho\left(\frac{M_1-(aD'\bar t_2\bar t_3+D'v)}{Dq'}\right)\right)\right]
\end{align*}
For the first sum, we write $v=v_1J+v_2$, where $0\leq v_2<J$, and we similarly write $t_2=r_1J+r_2$.  So we have $v_2\equiv r_2t_3\pmod J$, and hence
\begin{align*}\sum_{0<v<D}&\chi(v)\mathop{\sum\sum}\limits_{\substack{t_2,t_3\in \mathcal M_2,\mathcal M_3 \\ t_2t_3\equiv v\pmod{J}}}1\\
=&\mathop{\sum}\limits_{t_3\in \mathcal M_3}\sum_{r_1=\lceil M_2/J\rceil}^{\lfloor (M_2+M_2')/J\rfloor -1}\sum_{r_2=0}^{J-1}\sum_{v_1=0}^{D'-1}\chi(v_1J+r_2t_3)+O\left(D^2M_3\right)\\
=&O\left(D^2M_3\right),
\end{align*}
since the character sum is now zero.  The bound for the sum of $t_2$ and $t_3$ over the $\varrho$ terms is the same as before, since $D'v$ is much smaller than $M_1$.  Hence,
\begin{align*}&\mathop{\sum\sum}\limits_{t_2,t_3\in \mathcal M_2,\mathcal M_3}\sum_{\substack{M_1\leq t_1\leq M_1+M_1' \\ t_1\equiv a\bar t_2\bar t_3 \pmod{q}\\ t_1\equiv v\pmod D}}\chi(t_1)\\
&=O\left(\frac{M_1M_3D^2}{q}\right)+\sum_{0<v<D}\chi(v)\mathop{\sum\sum}\limits_{\substack{t_2,t_3\in \mathcal M_2,\mathcal M_3 \\ t_2t_3\equiv v\pmod J}}\sum_{0<|h|\leq H}c_q(h)e\left(-\frac{(a\bar t_2\bar t_3D' +vq')h}{Dq'}\right)\\
&\phantom{=}+O\left(\left(\frac{M_2M_3}{H}+(Dq)^\frac 12+\frac{M_2M_3}{Dq'}\right)Dx^\varepsilon\right).
\end{align*}
Again, we can bound the big-O terms by $\frac{x^{1+\varepsilon}}{q^{1+\alpha}}+D^\frac 32q^{\frac 12}x^\varepsilon$.  So
\begin{align*}&\sum_{0<v<D}\chi(v)\mathop{\sum\sum}\limits_{t_2,t_3\in \mathcal M_2,\mathcal M_3}\sum_{\substack{M_1\leq t_1\leq M_1+M_1' \\ t_1\equiv a\bar t_2\bar t_3 \pmod{q}\\ t_1\equiv v\pmod D}}\chi(t_1)\\
&=\sum_{0<v<D}\chi(v)\mathop{\sum\sum}\limits_{\substack{t_2,t_3\in \mathcal M_2,\mathcal M_3 \\ t_2t_3\equiv v\pmod J}}\sum_{0<|h|\leq H}c_q(h)e\left(-\frac{(a\bar t_2\bar t_3D' +vq')h}{Dq'}\right)+O\left(\left(\frac{x}{q^{1+\alpha}}+D^2q^{\frac 12}\right)x^\varepsilon\right).
\end{align*}
We detect the remaining congruence condition by Dirichlet characters, finding
\begin{align*}&\sum_{0<v<D}\chi(v)\mathop{\sum\sum}\limits_{\substack{t_2,t_3\in \mathcal M_2,\mathcal M_3 \\ t_2t_3\equiv v\pmod J}}\sum_{0<|h|\leq H}c_q(h)e\left(-\frac{(a\bar t_2\bar t_3D' +vq')h}{Dq'}\right)\\
&=\sum_{0<v<D}\frac{\chi(v)}{\phi(J)}\sum_{\chi' \pmod J}\mathop{\sum\sum}\limits_{\substack{t_2,t_3\in \mathcal M_2,\mathcal M_3 }}\chi'(t_2)\chi'(t_3)\overline{\chi'(v)}\sum_{0<|h|\leq H}c_q(h)e\left(-\frac{a\bar t_2\bar t_3h}{q}\right)e\left(-\frac{vh}{D}\right)\\
&\phantom{=}+O\left(\frac{x}{q^{1+\alpha}}+D^2q^{\frac 12+\alpha}\right)
\end{align*}
To prove the last inequality of the lemma, note that
\begin{align*}
&\left|\sum_{0<v<D}\frac{\chi(v)}{\phi(J)}\sum_{\chi' \pmod J}\mathop{\sum\sum}\limits_{\substack{t_2,t_3\in \mathcal M_2,\mathcal M_3 }}\chi'(t_2)\chi'(t_3)\overline{\chi'(v)}\sum_{0<|h|\leq H}c_q(h)e\left(-\frac{a\bar t_2\bar t_3h}{q}\right)e\left(-\frac{vh}{D}\right)\right|\\
&\ll D\max_{\chi'\pmod J}\left|\mathop{\sum\sum}\limits_{\substack{t_2,t_3\in \mathcal M_2,\mathcal M_3 }}\chi'(t_2)\chi'(t_3)\sum_{0<|h|\leq H}c_q(h)e\left(-\frac{a\bar t_2\bar t_3h}{q}\right)e\left(-\frac{vh}{D}\right)\right|.
\end{align*}
So if $t_2$ or $t_3$ is twisted then we take $g_i(n)=\chi(n)$ for the $t_i$ that is twisted and $g_j(n)=1$ for the remaining $j\neq i$.  Meanwhile if $t_1$ is twisted then we take
\begin{gather*}
g_1(n)=e\left(-\frac{vn}{D}\right),\\
g_2(n)=g_3=\chi'(n).
\end{gather*}
Since all of these $g_i$ are 1-bounded, the lemma follows.
\end{proof}
\begin{corollary}
For $D^2q<x^{\frac 23-2\alpha}$, and for $\varepsilon<\alpha/4$, 
\begin{gather}\left|\mathcal D_{\mathcal M,f}\right|\ll D^2q^{-1+\alpha}M_1\max_{1\leq y\leq H}\left|\mathop{\sum\sum}\limits_{\substack{t_2,t_3\in \mathcal M_2,\mathcal M_3 \\ (t_2t_3,q)=1 }}\sum_{0<|h|\leq y}g_1(h)g_2(t_2)g_3(t_3)e\left(-\frac{a\overline{t_2} \overline{t_3} h}{q}\right)\right|+\frac{x}{q^{1+\frac \alpha 2}}
\end{gather}
\end{corollary}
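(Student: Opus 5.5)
The plan is to start from inequality (\ref{chbound}) in Lemma \ref{ternary}. It already bounds $|\mathcal D_{\mathcal M,f}|$ by $D$ times a trilinear sum weighted by $c_q(h)$, plus $x/q^{1+\alpha/2}$, under the hypothesis $D^2q<x^{\frac 23-4\alpha}$. The corollary assumes only $D^2q<x^{\frac23-2\alpha}$. I will therefore first check that this weaker hypothesis suffices for the error terms in Lemma \ref{ternary}, namely $(q^{\frac12+\alpha}+x/q^{1+\alpha})x^\varepsilon$ and $(D^{3/2}q^{\frac12+\alpha}+x/q^{1+\alpha})x^\varepsilon$.

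\textbf{Step 1 (error terms).} With $\varepsilon<\alpha/4$, the term $x^{1+\varepsilon}/q^{1+\alpha}$ is at most $x/q^{1+\alpha/2}$ once $q^{\alpha/2}\ge x^{\varepsilon}$; this holds in our range since $q\geq x^{1/2}D^{-3}$ is a power of $x$. The term $D^{3/2}q^{\frac12+\alpha}x^\varepsilon\le x/q^{1+\alpha/2}$ amounts to $D^{3/2}q^{\frac32+\frac{3\alpha}2}x^\varepsilon\le x$. This follows from $D^2q<x^{\frac23-2\alpha}$ after raising to the $3/2$ power: $D^3q^{3/2}<x^{1-3\alpha}$, which leaves room for $q^{3\alpha/2}x^\varepsilon$.

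\textbf{Step 2 (removing $c_q(h)$).} Apply Abel summation in $h$, exactly as in the proof of the first lemma of the Friedlander--Iwaniec section. Here $c_q(h)=\frac1q\int_A^B e(hz/q)\,dz$ over an interval of length $\ll M_1$. Hence $|c_q(h)|\ll M_1/q$ and $|c_q(h+1)-c_q(h)|\ll M_1^2/q^2$. Partial summation then gives
\[
\Big|\sum_{0<|h|\le H} c_q(h)S(h)\Big|\ll \Big(\frac{M_1}{q}+\frac{HM_1^2}{q^2}\Big)\max_{1\le y\le H}\Big|\sum_{0<|h|\le y}S(h)\Big|,
\]
where $S(h)$ denotes the inner double sum over $t_2,t_3$ with weights $g_1(h)g_2(t_2)g_3(t_3)$. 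The weight $g_1(h)$, for instance $e(-vh/D)$, is carried along inside $S(h)$ and is not differentiated, so only $c_q$ gets summed by parts. Negative $h$ are handled symmetrically: conjugate, or absorb the sign into $g_1$.

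\textbf{Step 3 (the prefactor).} By (\ref{HM1}), $H\ll Dq^{1+\alpha}/M_1$, so $HM_1^2/q^2\ll Dq^{-1+\alpha}M_1$, which dominates $M_1/q$. Multiplying by the factor $D$ from (\ref{chbound}) gives the claimed prefactor $D^2q^{-1+\alpha}M_1$. The condition $(t_2t_3,q)=1$ is harmless, since the inverses are only defined for such $t_i$ anyway.

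The main obstacle is bookkeeping rather than anything deep. One must make sure the hypothesis of Lemma \ref{ternary} can be relaxed from $x^{\frac23-4\alpha}$ to $x^{\frac23-2\alpha}$, which is why Step 1 checks the error terms directly. One must also confirm that the twisted case with $t_1$ carrying $\chi$, which involves the modulus $q'D$ and the extra phase $e(-vh/D)$, still fits the same partial summation with $c_{q'D}$ in place of $c_q$. There the extra factor $D$ from $\sum_v$ is already absorbed into (\ref{chbound}).
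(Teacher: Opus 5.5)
Your proposal is correct and follows the paper's route. The paper's proof is just ``apply Abel's summation formula as before'': start from (\ref{chbound}), sum $c_q(h)$ by parts using $|c_q(h)|\ll M_1/q$ and $|c_q(h+1)-c_q(h)|\ll M_1^2/q^2$, then invoke (\ref{HM1}) to get the factor $Dq^{-1+\alpha}M_1$, which with the $D$ from (\ref{chbound}) gives $D^2q^{-1+\alpha}M_1$. Your extra checks are sound and cover details the paper leaves implicit: that the error terms of Lemma \ref{ternary} survive under the weaker hypothesis $D^2q<x^{\frac 23-2\alpha}$, that the $c_{q'D}$ case only improves the bound, and how negative $h$ are handled.
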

\begin{proof}
We apply Abel's summation formula as before.
\end{proof}

\section{Applying the Shparlinski bounds}
Having proven that Theorems \ref{Shp:T1} and \ref{Shp:T2} are relevant to $\mathcal D_{\mathcal M,f}$, we now apply those theorems.
\begin{lemma}\label{ShpApp1} Let $0<\alpha<1/100$.  For $D^2q<x^{\frac 23-4\alpha}$ and $M_2\leq x^\frac 13$, and for any $\varepsilon$ with $0<\varepsilon<\alpha/4$,
\begin{align*}\left|\mathcal D_{\mathcal M,f}\right|\ll &D^3x^\varepsilon q^{3\alpha}M_2^\frac 98q^{-\frac 18}x^\frac 14+\frac{x}{q^{1+\frac{\alpha}{2}}}
\end{align*}
\end{lemma}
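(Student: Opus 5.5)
The plan is to start from the Corollary following Lemma \ref{ternary}, which is available because $D^2q<x^{\frac 23-4\alpha}$. It gives
$$\left|\mathcal D_{\mathcal M,f}\right|\ll D^2q^{-1+\alpha}M_1\max_{1\leq y\leq H}|K_g|+\frac{x}{q^{1+\frac\alpha2}},$$
where $K_g$ is the trilinear sum with $h\leq y$, $t_3\in\mathcal M_3$, $t_2\in\mathcal M_2$ and $1$-bounded weights $g_1,g_2,g_3$. The sums over $t_2,t_3$ run over subdyadic intervals. We embed them in the dyadic ranges $[M_3,2M_3]$ and $[M_2,2M_2]$ by multiplying $g_2,g_3$ by indicator functions; the weights stay $1$-bounded, so Theorem \ref{Shp:T1} applies with $H=y$, $M=M_3$, $N=M_2$, $Q=q$. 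This gives
$$K_g\ll\left(yM_3+(yM_3)^{\frac34}q^{\frac14}\right)\left(M_2^{\frac78}q^{-\frac18}+M_2^{\frac12}\right)q^{o(1)}.$$
Both factors are increasing in $y$, so the maximum is attained at $y=H=Dq^{1+\alpha}M_2M_3/x$. Using $M_1M_2M_3\asymp x$ on the relevant boxes, we get $HM_3\asymp Dq^{1+\alpha}M_3/M_1$.

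First we dispose of the factor $M_2^{1/2}$. It dominates only when $M_2<q^{1/3}$. In that case $M_3\leq M_2$ forces $M_1\gg x/M_2^2>x/q^{2/3}$. This exceeds $Dq^{1+\alpha}$ because $q$ is at most about $x^{1/2+o(1)}$ (and $D$ is a small power of $x$). Such boxes were already removed by (\ref{boundsform1m2m3}), so we may assume $M_2\geq q^{1/3}$ and replace the second factor by $M_2^{7/8}q^{-1/8}$.

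Next we multiply out the two terms of the first factor.

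The $HM_3$ term contributes $D^2q^{-1+\alpha}M_1\cdot Dq^{1+\alpha}M_3/M_1=D^3q^{2\alpha}M_3$. Multiplied by $M_2^{7/8}q^{-1/8}$, it is at most the target $D^3q^{3\alpha}M_2^{9/8}q^{-1/8}x^{1/4}$ provided $M_3\leq M_2^{1/4}x^{1/4}$. This holds because $M_3^4\leq M_3^3M_2\leq M_1M_2M_3\cdot M_2\ll xM_2$.

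The $(HM_3)^{3/4}q^{1/4}$ term contributes
$$D^2q^{-1+\alpha}M_1\left(\frac{Dq^{1+\alpha}M_3}{M_1}\right)^{\frac34}q^{\frac14}\ll D^{3}q^{2\alpha}M_1^{\frac14}M_3^{\frac34}.$$
Multiplied by $M_2^{7/8}q^{-1/8}$, it is bounded by the target provided $M_1M_3^3\leq xM_2$. Writing $x\asymp M_1M_2M_3$, this is exactly $M_3^2\leq M_2^2$, which is true.

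Finally, the factors $q^{o(1)}$ and $x^{\varepsilon}$ are absorbed into $x^\varepsilon q^{3\alpha}$, and the error term $x/q^{1+\alpha/2}$ carries over from the Corollary. The hypothesis $M_2\leq x^{1/3}$ only ensures we are in the regime where this bound is the useful one; it is not needed in the arithmetic above.

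I expect the main obstacle to be justifying that Theorem \ref{Shp:T1} applies to our sum as it comes out of Lemma \ref{ternary}. Two points need care. One is the reduction from subdyadic intervals (and the partial sums in $h$ up to $y$) to Shparlinski's dyadic setup with $1$-bounded coefficients. The other is the case where $t_1$ carries the character. There the sum ranges over $t_2,t_3$ without the coprimality condition to $q$ and carries the extra phase $e(-vh/D)$, which has to be absorbed into $g_1(h)$. The other delicate point is verifying that the excluded range $M_2<q^{1/3}$ is genuinely incompatible with (\ref{boundsform1m2m3}) under the stated size conditions on $D$ and $q$.
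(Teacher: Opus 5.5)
Your proposal is correct and follows essentially the paper's argument: start from the Corollary to Lemma \ref{ternary}, apply Theorem \ref{Shp:T1}, and use $H\ll Dq^{1+\alpha}/M_1$ to cancel $M_1$. You then discard the $N^{1/2}$ term using the lower bound on the variables (like the paper, this implicitly uses the smaller range of $q$ from the application, not just $D^2q<x^{2/3-4\alpha}$), and finish with $M_3\le M_2$. The only difference is that you pair $h$ with $M_3$ and put $M_2$ in the $N^{7/8}$ slot, whereas the paper takes $M=M_2$, $N=M_3$; this gives the paper the marginally sharper intermediate bound $M_2^{1/2}M_3^{5/8}$ (versus your $M_2^{5/8}M_3^{1/2}$), but both reduce to the same $M_2^{9/8}$.
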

\begin{proof}
Applying Theorem \ref{Shp:T1}, we let $H$ be as above, $M=M_2$, and $N=M_3$.  Then
\begin{align*}\left|\mathcal D_{\mathcal M,f}\right|\ll &D^2x^\varepsilon q^{-1+\alpha}M_1\left(HM_2+(HM_2)^\frac 34q^\frac 14\right)\left(M_3^\frac 78q^{-\frac 18}+M_3^\frac 12\right)+\frac{x}{q^{1+\frac{\alpha}{2}}}\\
\ll &D^3q^{2\alpha}x^\varepsilon\left(M_2+\frac{M_2^\frac 34q^\frac 14}{H^\frac 14}\right)\left(M_3^\frac 78q^{-\frac 18}+M_3^\frac 12\right)+\frac{x}{q^{1+\frac{\alpha}{2}}}\\
\ll &D^3q^{2\alpha}x^\varepsilon\left(M_2+\frac{M_2^\frac 12x^\frac 14}{M_3^\frac 14}\right)\left(M_3^\frac 78q^{-\frac 18}+M_3^\frac 12\right)+\frac{x}{q^{1+\frac{\alpha}{2}}}.
\end{align*}
Recall that $M_3>\frac{x}{q^{\frac 32+\alpha}}$.  We can see that $$M_3^\frac 78q^{-\frac 18}\geq M_3^\frac 12$$ as long as $M_3\geq q^{\frac 16}$, which clearly holds here since $q<x^\frac{8}{15}$ and hence $\frac{x}{q^{\frac 32+\alpha}}>q^{\frac 16}$.  Moreover, since $M_3\leq M_2\leq x^\frac 13$ by assumption,
$$\frac{M_2^\frac 12x^\frac 14}{M_3^\frac 14}\geq M_2^\frac 14x^\frac 14\geq M_2.$$
So
\begin{align*}
\left|\mathcal D_{\mathcal M,f}\right|\ll &D^3x^\varepsilon q^{3\alpha}M_2^\frac 12M_3^\frac 58q^{-\frac 18}x^\frac 14+\frac{x}{q^{1+\frac{\alpha}{2}}}.
\end{align*}
Using the bound that $M_3\leq M_2$, we then have 
\begin{align*}
\left|\mathcal D_{\mathcal M,f}\right|\ll &D^3x^\varepsilon q^{3\alpha}M_2^\frac 98q^{-\frac 18}x^\frac 14+\frac{x}{q^{1+\frac{\alpha}{2}}}.
\end{align*}

\end{proof}




\begin{lemma}\label{ShpApp2}
Let $0<\alpha<1/100$, $D^2Q<x^{\frac 23-4\alpha}$, and $M_2\leq x^\frac 13$.  Let $\kappa>0$ be a fixed real number.  For all but at most $Q^{1-4\kappa+o(1)}$ values of $q\in [Q,2Q]$
\begin{align*}\max_{(a,q)=1}|\mathcal D_{\mathcal M,f}|\ll &D^3x^\varepsilon\left(M_2^\frac 54x^\frac 14Q^{-\frac 14}+M_2^\frac 34x^\frac 14\right)Q^{2\alpha+\kappa}+\frac{x}{Q^{1+\frac \alpha 2}}
\end{align*}
for any $\varepsilon$ with $0<\varepsilon<\alpha/4$.
\end{lemma}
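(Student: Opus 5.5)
The plan is to follow the proof of Lemma \ref{ShpApp1} line for line, replacing Theorem \ref{Shp:T1} by Theorem \ref{Shp:T2}. The starting point is the Corollary following Lemma \ref{ternary}, which is available because $D^2q\le 2D^2Q<x^{\frac 23-2\alpha}$ for $q\in[Q,2Q]$. For each such $q$ it gives
\begin{gather*}
|\mathcal D_{\mathcal M,f}|\ll D^2q^{-1+\alpha}M_1\max_{1\le y\le H}\left|K_g(q,a;y)\right|+\frac{x}{q^{1+\frac\alpha 2}},
\end{gather*}
where $K_g(q,a;y)$ is the trilinear Kloosterman sum with $h\le y$, $M=M_2$, $N=M_3$, and $1$-bounded coefficients $g_i$.

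A subdyadic interval $\mathcal M_i$ can be placed inside a dyadic range, with the indicator absorbed into $g_2$ and $g_3$. The $1$-bounded $g_i$ may depend on $q$ and $a$ (through $\chi'$ mod $J$ and $e(-vh/D)$). This causes a problem: the exceptional set in Theorem \ref{Shp:T2} is stated for a fixed triple $g$, while here $g$ varies with $q$.

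To fix this, I would check that Shparlinski's exceptional set depends only on the sizes $H,M,N$ and not on $g$. In his argument the coefficients are removed by Cauchy--Schwarz/H\"older, and the bad moduli come from an averaged bound on a complete congruence-counting problem that does not involve $g$. If this is not clean, there is a fallback for the case $t_1$ twisted: the coefficients range over $O(D^2)$ possibilities (a choice of $v$ mod $D$ and of $\chi'$ mod a divisor $J$ of $D$). A union bound then enlarges the exceptional set to $D^{O(1)}Q^{1-4\kappa+o(1)}$, which is acceptable since $D$ is a small power of $x$ that can be absorbed after renaming $\kappa$. The maximum over $y\le H$ is handled by applying the theorem with $H$ replaced by $y$; since the bound is monotone in $H$, the worst case is $y=H$. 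Similarly, $\max_{(a,q)=1}$ is already built into \eqref{goodq}. This uniformity step is the main obstacle.

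For non-exceptional $q$ the remaining step is arithmetic. Theorem \ref{Shp:T2} gives
\begin{gather*}
|\mathcal D_{\mathcal M,f}|\ll D^2x^\varepsilon Q^{-1+\alpha+\kappa}M_1\left(HM_2+(HM_2)^{\frac34}Q^{\frac14}\right)\left(M_3Q^{-\frac14}+M_3^{\frac12}\right)+\frac{x}{Q^{1+\frac\alpha2}}.
\end{gather*}
Using $H=Dq^{1+\alpha}M_2M_3/x$ and $M_1M_2M_3\asymp x$ gives $q^{-1}M_1H\ll Dq^{\alpha}$. This produces a factor $D^3Q^{2\alpha+\kappa}$ times
\begin{gather*}
\left(M_2+\frac{M_2^{\frac12}x^{\frac14}}{M_3^{\frac14}}\right)\left(M_3Q^{-\frac14}+M_3^{\frac12}\right).
\end{gather*}
As in Lemma \ref{ShpApp1}, $M_2\le x^{1/3}$ and $M_3\le M_2$ imply that the second term in the first parenthesis dominates. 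Multiplying out gives $M_2^{\frac12}M_3^{\frac34}x^{\frac14}Q^{-\frac14}+M_2^{\frac12}M_3^{\frac14}x^{\frac14}$. Bounding $M_3\le M_2$ in each term then yields exactly $M_2^{\frac54}x^{\frac14}Q^{-\frac14}+M_2^{\frac34}x^{\frac14}$. Finally, replace $q$ by $Q$ in the error term, losing only constants.
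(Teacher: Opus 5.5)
Your proposal is correct and follows the paper's proof: start from the Corollary to Lemma \ref{ternary}, apply Theorem \ref{Shp:T2} with $M=M_2$, $N=M_3$, use $q^{-1}M_1H\ll Dq^{\alpha}$ to extract the factor $D^3Q^{2\alpha+\kappa}$, note that $M_2^{1/2}x^{1/4}M_3^{-1/4}$ dominates $M_2$, then multiply out and use $M_3\le M_2$. The paper does not address the uniformity of the exceptional set in the coefficients $g$ (which vary with $q$ through $\chi'$ mod $(q,D)$ and with the truncation $h\le y$), and your primary justification for it is the right one: after H\"older removes the weights, Shparlinski's exceptional moduli are determined by the weight-free count of solutions to $\bar n_1+\bar n_2\equiv\bar n_3+\bar n_4 \pmod q$ with $n_i\sim N$, which involves neither $g$, $a$ nor the $h$-range; your union-bound fallback, by contrast, would only give $D^{O(1)}Q^{1-4\kappa+o(1)}$ exceptional moduli, which is weaker than the stated bound unless $D=Q^{o(1)}$.
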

\begin{proof}
We proceed as in the previous lemma, letting $M=M_2$ and $N=M_3$ and then factoring an $H$.  This gives
\begin{align*}\max_{(a,q)=1}|\mathcal D_{\mathcal M,f}|\ll &D^3x^\varepsilon\left(M_2+\frac{M_2^\frac 12x^\frac 14}{M_3^\frac 14}\right)\left(M_3Q^{-\frac 14}+M_3^\frac 12\right)Q^{2\alpha+\kappa+o(1)}+\frac{x}{Q^{1+\frac \alpha 2}}.
\end{align*}
We absorb the $Q^{o(1)}$ into the $x^\varepsilon$-term.  Again,
$$\frac{M_2^\frac 12x^\frac 14}{M_3^\frac 14}\geq M_2^\frac 14x^\frac 14\geq M_2.$$
So
\begin{align*}\max_{(a,q)=1}|\mathcal D_{\mathcal M,f}|\ll &D^3x^\varepsilon\left(M_2^\frac 12M_3^\frac 34x^\frac 14Q^{-\frac 14}+M_2^\frac 12M_3^\frac 14x^\frac 14\right)Q^{2\alpha+\kappa}+\frac{x}{Q^{1+\frac \alpha 2}}.
\end{align*}
Using the bound that $M_3\leq M_2$,
\begin{align*}\max_{(a,q)=1}|\mathcal D_{\mathcal M,f}|\ll &D^3x^\varepsilon\left(M_2^\frac 54x^\frac 14Q^{-\frac 14}+M_2^\frac 34x^\frac 14\right)Q^{2\alpha+\kappa}+\frac{x}{Q^{1+\frac \alpha 2}}.
\end{align*}

\end{proof}
\section{Proofs of Theorems \ref{MainSubThm1} and \ref{MainSubThm2}}

Finally, we prove Theorems \ref{MainSubThm1} and \ref{MainSubThm2}.  We begin with the former, which will follow from the following theorem.

\begin{theorem}\label{MainReprise1}  Let $\alpha$ be such that $0<\alpha<1/100$. If $D^{3}q<x^{\frac{30}{59}-4\alpha}$ then 
\begin{align*}
|\mathcal D_{\mathcal M,f}|\ll &\frac{x}{q^{1+\frac \alpha 2}}.
\end{align*}
\end{theorem}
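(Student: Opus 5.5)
The plan is a case split according to the size of the middle variable $M_2$. When $M_2$ is large we use the Friedlander--Iwaniec bound of Lemma \ref{G:FI}; when $M_2$ is small we use the Shparlinski bound of Lemma \ref{ShpApp1}. The exponent $\frac{30}{59}$ should come out exactly as the value of $q$ at which the two admissible ranges for $M_2$ just overlap.

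First we check that both lemmas apply. The hypothesis $D^3q<x^{\frac{30}{59}-4\alpha}$ gives $D^3q<x^{\frac 8{15}-\alpha}$ and $D^2q<x^{\frac 23-4\alpha}$. The lower bound $x^{\frac 12}\le D^3q$ is inherited from the setting of Theorem \ref{MainSubThm1}; in any case, for smaller $q$ the trivial and binary reductions of the partitioning section already suffice. We also take $\varepsilon$ small in terms of $\alpha$, say $\varepsilon<\alpha/8$, so that every $x^\varepsilon$ loss can be absorbed by a fraction of a power $q^{\alpha}$, using $q\ge x^{\frac 12}D^{-3}$.

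\emph{Large $M_2$ (Lemma \ref{G:FI}).} The lemma gives $|\mathcal D_{\mathcal M,f}|\ll D^3x^\varepsilon q^{\frac14+\alpha}(x/M_2)^{\frac12}+x/q^{1+\alpha}$. The first term is at most $x/q^{1+\alpha/2}$ as soon as
$$M_2\ \ge\ D^6x^{2\varepsilon}q^{\frac52+3\alpha}x^{-1}.$$
Call the right-hand side $T_1$. In particular this handles every $M_2>x^{\frac13}$, because $T_1\le x^{\frac13}$ holds whenever (roughly) $D^{12/5}q<x^{\frac8{15}-O(\alpha)}$, and that is implied by our hypothesis.

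\emph{Small $M_2$ (Lemma \ref{ShpApp1}).} Now let $M_2\le x^{\frac13}$. The lemma gives $|\mathcal D_{\mathcal M,f}|\ll D^3x^\varepsilon q^{3\alpha}M_2^{\frac98}q^{-\frac18}x^{\frac14}+x/q^{1+\alpha/2}$. The first term is at most $x/q^{1+\alpha/2}$ provided
$$M_2^{\frac98}\le x^{\frac34}q^{-\frac78-4\alpha}D^{-3}x^{-\varepsilon},\qquad\text{that is,}\qquad M_2\le T_2:=x^{\frac23}q^{-\frac79-O(\alpha)}D^{-\frac83}x^{-O(\varepsilon)}.$$

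It remains to show that the two ranges cover every $M_2$, which amounts to $T_1\le T_2$. Ignoring the $D$, $\alpha$ and $\varepsilon$ factors, this reads $q^{\frac52+\frac79}\le x^{\frac53}$. Since $\frac52+\frac79=\frac{59}{18}$, this is exactly $q\le x^{\frac{30}{59}}$.

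The main obstacle is the bookkeeping of the auxiliary factors in this comparison. The $D$-powers come to $D^{6+\frac83}$ against $x^{\frac53}$; raised to the power $\frac{18}{59}$ this is $D^{156/59}\le D^{3}$, so the hypothesis is stated in terms of $D^3q$. The $q^{O(\alpha)}$ and $x^{O(\varepsilon)}$ losses must fit inside the $x^{-4\alpha}$ margin. I would verify this carefully, shrinking $\varepsilon$ relative to $\alpha$ if necessary. Once $T_1\le T_2$ is established, every triple $\mathcal M$ satisfying (\ref{boundsform1m2m3}) falls into one of the two cases, and the theorem follows.
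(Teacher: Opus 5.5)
Your proposal is correct and is essentially the paper's proof: the paper splits at the explicit crossover $M_2=q^{3/13}x^{2/13}$, where the bounds of Lemma \ref{G:FI} and Lemma \ref{ShpApp1} coincide and which lies automatically below $x^{1/3}$. That split is equivalent to your check $T_1\le\min(T_2,x^{1/3})$, and the resulting condition $D^{156/59}q\le x^{30/59}$ is exactly the paper's. One small correction: your aside that smaller $q$ are handled by the trivial and binary reductions is false (e.g.\ $q=x^{0.45}$ with $M_1=M_2=M_3=x^{1/3}$ lies in $\mathcal I$), but you do not need it, since $x^{1/2}<D^3q$ is part of the setting in which the theorem is used.
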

\begin{proof}
We split the proof into two cases: $M_2\leq q^{\frac{3}{13}}x^\frac{2}{13}$ and $M_2>q^{\frac{3}{13}}x^\frac{2}{13}$.  

If $M_2>q^{\frac{3}{13}}x^\frac{2}{13}$ then we apply Lemma \ref{G:FI}, finding
\begin{gather}\label{q752}|\mathcal D_{\mathcal M,f}|\ll D^3x^\varepsilon q^{\frac 14+3\alpha} \left(\frac{x}{M_2}\right)^{\frac 12}+\frac{x}{q^{1+\frac \alpha 2}}\ll D^3x^\varepsilon q^{\frac{7}{52}+3\alpha}x^{\frac{22}{52}}+\frac{x}{q^{1+\frac \alpha 2}}.\end{gather}
We can assume that $\varepsilon<\frac{\alpha}{200}$.  Plugging in our bound for $q$ gives
\begin{align*}D^3q^{\frac{7}{52}+3\alpha}x^{\frac{22}{52}}
=&D^3x^\varepsilon q^{3\alpha} \frac xq\left(q^{59}x^{-30}\right)^\frac{1}{52}\\
\leq &D^3x^\varepsilon q^{3\alpha}\frac xq\left(\frac{x^{-118\alpha}}{D^{177}}\right)^\frac{1}{52}\\
\leq &D^3x^\varepsilon q^{3\alpha}\frac xq\left(\frac{q^{-118\left(\frac{52}{27}\right)\alpha}}{D^{177}}\right)^\frac{1}{52}\\
\ll &\frac{x}{q^{1+\alpha}}.
\end{align*}
Now let $M_2\leq q^{\frac{3}{13}}x^\frac{2}{13}$.  Since $q<x^\frac{2}{3}$, we have $M_2\leq x^{\frac 4{13}}<x^\frac 13$, and hence we can apply Lemma \ref{ShpApp1}.  So
\begin{align*}
|\mathcal D_{\mathcal M,f}|\ll &D^3x^\varepsilon q^{3\alpha}M_2^\frac 98q^{-\frac 18}x^\frac 14+\frac{x}{q^{1+\frac{\alpha}{2}}}\ll D^3x^\varepsilon q^{\frac{7}{52}+3\alpha}x^{\frac{22}{52}}+\frac{x}{q^{1+\frac{\alpha}{2}}}.
\end{align*}
This is the same bound as in (\ref{q752}), and hence the lemma follows.

\end{proof}

\begin{theorem}\label{MainReprise2}
Let $D^3Q<x^{\frac{16}{31}-2\alpha}$.  For all but at most $Q^{1-2\alpha+o(1)}$ values of $q\in [Q,2Q]$
\begin{align}\label{Qkappa}\max_{(a,q)=1}|\mathcal D_{\mathcal M,f}(a,q)|\ll &\frac{x}{Q^{1+\frac \alpha 2}}.
\end{align}
\end{theorem}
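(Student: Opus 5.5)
The plan mirrors the proof of Theorem \ref{MainReprise1}: we split according to the size of $M_2$, use Lemma \ref{G:FI} when $M_2$ is large, and use the averaged Shparlinski bound (Lemma \ref{ShpApp2}) when $M_2$ is small. The threshold is chosen to balance the two main terms. Setting
$$Q^{\frac 14}\Big(\frac{x}{M_2}\Big)^{\frac 12}=M_2^{\frac 54}x^{\frac 14}Q^{-\frac 14}$$
gives $M_2^{7/4}=x^{1/4}Q^{1/2}$, so we split at $M_2=x^{\frac 17}Q^{\frac 27}$.

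\emph{Case $M_2>x^{1/7}Q^{2/7}$.} Here Lemma \ref{G:FI} applies to every individual $q\in[Q,2Q]$. Its hypotheses hold because $D^3q\ll x^{\frac{16}{31}-2\alpha}<x^{\frac{8}{15}-\alpha}$. It gives
$$|\mathcal D_{\mathcal M,f}|\ll D^3x^\varepsilon Q^{\frac 14+3\alpha}x^{\frac 12}M_2^{-\frac 12}+\frac{x}{Q^{1+\alpha}}\ll D^3x^\varepsilon Q^{\frac{3}{28}+3\alpha}x^{\frac{3}{7}}+\frac{x}{Q^{1+\alpha}}.$$
We rewrite the main term as
$$D^3x^\varepsilon Q^{3\alpha}\,\frac xQ\,\big(Q^{31}x^{-16}\big)^{\frac 1{28}}.$$
The hypothesis $D^3Q<x^{\frac{16}{31}-2\alpha}$ then yields a saving of $D^{-93/28}x^{-62\alpha/28}$. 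This beats both the loss $D^3Q^{3\alpha}$ and the extra $Q^{\alpha/2}$ we need, since $Q\le x^{0.52}$ and hence $Q^{3.5\alpha}\le x^{1.82\alpha}<x^{2.2\alpha}$. We may also take $\varepsilon$ tiny relative to $\alpha$. This is the same bookkeeping as in (\ref{q752}).

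\emph{Case $M_2\le x^{1/7}Q^{2/7}$.} First, $M_2\le x^{1/7+32/217}<x^{1/3}$ and $D^2Q<x^{\frac 23-4\alpha}$, so Lemma \ref{ShpApp2} applies. We take $\kappa=\alpha/2$, so that outside an exceptional set of at most $Q^{1-2\alpha+o(1)}$ moduli,
$$\max_{(a,q)=1}|\mathcal D_{\mathcal M,f}|\ll D^3x^\varepsilon\Big(M_2^{\frac 54}x^{\frac 14}Q^{-\frac 14}+M_2^{\frac 34}x^{\frac 14}\Big)Q^{\frac{5\alpha}2}+\frac{x}{Q^{1+\frac\alpha2}}.$$
At the threshold, the first term equals the same quantity $Q^{3/28}x^{3/7}$ as in the first case, so it is handled identically. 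The second term is at most $x^{10/28}Q^{3/14}$. This is below $x/Q^{1+\alpha}$ with room to spare, because it only requires $Q<x^{9/17}$, and $\frac{9}{17}>\frac{16}{31}$.

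Finally, $\mathcal D_{\mathcal M,f}$ depends on the triple $\mathcal M$ through Shparlinski's parameters $H,M,N$, so the exceptional set from Lemma \ref{ShpApp2} depends on $\mathcal M$. We take the union over all triples in $\mathcal J$. There are only $(\log x)^{O(1)}$ of them, so the total exceptional set still has size $Q^{1-2\alpha+o(1)}$. The main obstacle I expect is purely this exponent bookkeeping: checking that the $\alpha$-losses ($Q^{2\alpha+\kappa}$, the factor $D^3$, and $x^\varepsilon$) fit inside the $2\alpha$ margin of the hypothesis while still leaving the saving $Q^{\alpha/2}$, and that the choice $\kappa=\alpha/2$ produces exactly the stated exceptional-set size $Q^{1-2\alpha+o(1)}$.
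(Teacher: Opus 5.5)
Your proposal is correct and follows essentially the same route as the paper: split at $M_2=x^{1/7}Q^{2/7}$, use Lemma \ref{G:FI} for large $M_2$ and Lemma \ref{ShpApp2} with $\kappa=\alpha/2$ for small $M_2$, and handle the exponents by writing the main term as $\frac{x}{Q}\,(Q^{31}x^{-16})^{1/28}$. Your case assignment is the right one (the paper's first case has the inequality on $M_2$ written the wrong way round), and your union of exceptional sets over the $(\log x)^{O(1)}$ triples goes beyond the stated theorem for a single $\mathcal M$; it is needed downstream, and the paper leaves it implicit.
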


\begin{proof}
We split the proof into two cases.  Here, the split will be $M_2\leq Q^\frac 27x^\frac 17$ and $M_2>Q^\frac 27x^\frac 17$.  

First, assume $M_2\leq Q^\frac 27x^\frac 17$.  Applying Lemma \ref{G:FI}, we have
\begin{gather}\label{Q328}|\mathcal D_{\mathcal M,f}|\ll D^3x^\varepsilon q^{\frac 14+3\alpha} \left(\frac{x}{M_2}\right)^{\frac 12}+\frac{x}{q^{1+\frac{\alpha}{2}}}\ll D^3x^\varepsilon q^{\frac{3}{28}+3\alpha}x^{\frac{3}{7}}+\frac{x}{q^{1+\frac{\alpha}{2}}}.\end{gather}
We again assume that $\varepsilon<\frac{\alpha}{200}$.  Plugging in our bound for $q$ gives
\begin{align*}D^3x^\varepsilon q^{\frac{3}{28}+3\alpha}x^{\frac{6}{14}}\ll &D^3x^\varepsilon q^{3\alpha}\frac xq\left(q^{31}x^{-16}\right)^{\frac 1{28}}\\
\ll &D^3x^\varepsilon q^{3\alpha}\frac xq\left(D^{-93}x^{-62\alpha}\right)^{\frac 1{28}}\\
\ll &D^3x^\varepsilon q^{3\alpha}\frac xq\left(D^{-93}q^{-62\left(\frac{28}{15}\right)\alpha}\right)^{\frac 1{28}}\\
\ll &\frac{x}{q^{1+\alpha}}.
\end{align*}
Now, let $M_2<Q^\frac 27x^\frac 17$.  Since $Q<x^\frac{2}{3}$, we have $Q^\frac 27x^\frac 17<x^{\frac{4}{21}}x^\frac 17=x^\frac 13$.  Take $\kappa=\frac{\alpha}{2}$.  So we can apply Lemma \ref{ShpApp2}, finding that for all but at most $Q^{1-2\alpha+o(1)}$ values of $q\in [Q,2Q]$
\begin{align*}\max_{(a,q)=1}|\mathcal D_{\mathcal M,f}|\ll &D^3x^\varepsilon\left(Q^{\frac 3{28}}x^\frac{12}{28}+Q^{\frac{6}{28}}x^\frac{10}{28}\right)Q^{2\alpha+\kappa}+\frac{x}{q^{1+\frac{\alpha}{2}}}\ll D^3x^\varepsilon Q^{\frac 3{28}}x^\frac{12}{28}Q^{\frac 52\alpha}+\frac{x}{Q^{1+\frac \alpha 2}}.
\end{align*}
This is slightly smaller than the right-hand side of (\ref{Q328}), and so we can bound this in the same fashion.
\end{proof}
We prove the Elliott-Halberstam analogue as a corollary.
\begin{theorem}\label{MainReprise3}
Let $D^3Q<x^{\frac{16}{31}-2\alpha}$.  Then
\begin{align*}\sum_{q\sim Q}\max_{(a,q)=1}|\mathcal D_{\mathcal M,f}(a,q)|\ll &\frac{x}{Q^{\alpha}}.
\end{align*}
\end{theorem}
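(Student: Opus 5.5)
Theorem \ref{MainReprise3} follows from Theorem \ref{MainReprise2} by splitting the moduli $q\sim Q$ into a good set and an exceptional set. Let $\mathcal E\subset[Q,2Q]$ be the set of $q$ for which (\ref{Qkappa}) fails. By Theorem \ref{MainReprise2}, $|\mathcal E|\ll Q^{1-2\alpha+o(1)}$. Since $\mathcal D_{\mathcal M,f}$ is defined for a single triple $\mathcal M$ of short intervals, the estimate is applied to that fixed triple; the $\log$-power number of triples is absorbed afterwards, exactly as in the deduction of Theorem \ref{MainSubThm2}.

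For $q\notin\mathcal E$ we use (\ref{Qkappa}) directly. Summing over at most $Q$ such moduli gives
$$\sum_{\substack{q\sim Q\\ q\notin\mathcal E}}\max_{(a,q)=1}|\mathcal D_{\mathcal M,f}(a,q)|\ll Q\cdot\frac{x}{Q^{1+\frac\alpha2}}=\frac{x}{Q^{\alpha/2}}.$$
This is short of the claimed $x/Q^{\alpha}$. There are two remedies. The first is to apply Theorem \ref{MainReprise2} with $\alpha$ replaced by $2\alpha$: the hypothesis $D^3Q<x^{16/31-4\alpha}$ still has the same shape, and since $\alpha$ is an arbitrary small parameter the statement is unchanged up to renaming. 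The second is to notice that the proofs of Lemmas \ref{G:FI} and \ref{ShpApp2} actually give $x/q^{1+\alpha}$ for the main contribution. In either case the good moduli contribute $\ll x/Q^{\alpha}$.

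For $q\in\mathcal E$ we need only a trivial bound for an individual modulus, uniform in $a$. Because the $t_i$ range over $\mathcal M_i$ with $M_1M_2M_3\asymp x$ and $M_1\le Dq^{1+\alpha}$, we fix $t_2,t_3$ and count $t_1$ in the single residue class $a\overline{t_2t_3}$ modulo $q$. This gives
$$|\mathcal D_{\mathcal M,f}(a,q)|\ll M_2M_3\left(\frac{M_1}{q}+1\right)\ll \frac{x}{q}+M_2M_3.$$
Using $M_3\le M_2\le M_1$ and $M_1M_2M_3\ll x$, we have $M_2M_3\ll x^{2/3}$. This is $\ll x/Q$ as long as $Q<x^{1/3}$, which is false in our range $Q\approx x^{1/2}$, so the trivial bound is not sharp enough. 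Instead we use the individual-$q$ bound of Theorem \ref{MainReprise1}: the condition $D^3Q<x^{16/31-2\alpha}$ does not imply $D^3q<x^{30/59-4\alpha}$, but Lemma \ref{G:FI} combined with Lemma \ref{ShpApp1} still gives, for every $q$, a bound of the form $x^{1-\delta}/q$ for some $\delta\ge0$. Summing the bound $x^{1+\varepsilon}/Q$ over $|\mathcal E|\ll Q^{1-2\alpha+o(1)}$ moduli gives $\ll x^{1+\varepsilon}Q^{-2\alpha+o(1)}$, which is $\ll x/Q^{\alpha}$ once $\varepsilon<\alpha/200$.

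The main obstacle is therefore securing a uniform bound $\ll x^{1+\varepsilon}/q$ for every modulus, including the exceptional ones. This is what lets the exceptional set be absorbed with room to spare, since its size saves $Q^{2\alpha}$ and we lose only $Q^{\alpha}$.

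The first thing to try is the elementary divisor-type bound $\sum_{n\le x,\,n\equiv b\,(q)}\tau_3(n)\ll x^{1+\varepsilon}/q$ for $q\le x^{1-\delta}$. It holds by Shiu's Brun–Titchmarsh theorem for multiplicative functions, or more crudely by $\tau_3(n)\ll x^{\varepsilon}$. Since $|\mathcal D_{\mathcal M,f}|$ is bounded by such a sum up to the $\mathcal M$ restriction, this gives $|\mathcal D_{\mathcal M,f}(a,q)|\ll x^{1+\varepsilon}/q$ directly, without any Kloosterman input. Combining the good and exceptional contributions then proves the theorem.
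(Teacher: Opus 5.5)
Your argument is the paper's: split $q\sim Q$ into the moduli where (\ref{Qkappa}) holds, which contribute $\ll Q\cdot x/Q^{1+\alpha/2}$, and the at most $Q^{1-2\alpha+o(1)}$ exceptional moduli, which are handled by the trivial bound $|\mathcal D_{\mathcal M,f}(a,q)|\le\sum_{n\le 8x,\ n\equiv a \bmod q}\tau_3(n)$. The paper quotes Shiu's theorem for this; your crude $x^{1+\varepsilon}/q$ works equally well, since $Q$ is a fixed power of $x$ in this setting.

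You are right that the good moduli only give $x/Q^{\alpha/2}$. The paper's proof has the same slip: it ends with $x/Q^{\alpha/2}$ and calls it ``as required''. Neither of your remedies closes this under the hypothesis as stated, though:
\begin{enumerate}[(a)]
\item The renaming remedy proves the claimed bound only under the stronger hypothesis $D^3Q<x^{16/31-4\alpha}$.
\item The second remedy is not literally what the proofs give, because the $\varrho$-error in Lemma \ref{ternary} is $x^{1+\varepsilon}/q^{1+\alpha}$, not $x/q^{1+\alpha}$. It can plausibly be rescued by enlarging $H$ by a small power of $q$; the exponent count in Theorem \ref{MainReprise2} seems to leave enough slack to absorb this, but you would need to redo that computation.
\end{enumerate}

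Finally, drop the detour through Theorem \ref{MainReprise1}. For $q>x^{30/59}$, combining Lemmas \ref{G:FI} and \ref{ShpApp1} gives, at the worst $M_2$, a bound larger than $x/q$, so the claimed ``$x^{1-\delta}/q$'' is unjustified. The $\tau_3$ bound makes that step unnecessary anyway.
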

\begin{proof}
Let $\mathcal A$ denote the $q\in [Q,2Q]$ for which (\ref{Qkappa}) holds, and let $\mathcal B$ denote the remaining $q$.  Then
\begin{align*}\sum_{q\sim Q}\max_{(a,q)=1}|\mathcal D_{\mathcal M,f}(a,q)|=&\sum_{q\in \mathcal A}\max_{(a,q)=1}|\mathcal D_{\mathcal M,f}(a,q)|+\sum_{q\in \mathcal B}\max_{(a,q)=1}|\mathcal D_{\mathcal M,f}(a,q)|\\
&\ll \frac{x}{Q^{\frac{\alpha}{2}}}+\sum_{q\in \mathcal B}\max_{(a,q)=1}|\mathcal D_{\mathcal M,f}(a,q)|
\end{align*}
by the previous theorem, since $|\mathcal A|\leq Q$.

Note that trivially
$$\left|\mathcal D_{\mathcal M,f}(a,q)\right|\ll \sum_{\substack{n\leq 2x \\ n\equiv a\pmod q}}\tau_3(n)\ll \frac{x\log^3 x}{q}$$
by Shiu's theorem \cite{Sh}.  Since $|\mathcal B|\leq Q^{1-2\alpha+o(1)}$ by the theorem, we have
$$\sum_{q\in \mathcal B}\max_{(a,q)=1}|\mathcal D_{\mathcal M,f}(a,q)|\ll \frac{x}{Q^{2\alpha+o(1)}}\ll \frac{x}{Q^{\alpha}}.$$
Putting together our estimates for $\mathcal A$ and $\mathcal B$, we find
\begin{align*}\sum_{q\sim Q}\max_{(a,q)=1}|\mathcal D_{\mathcal M,f}(a,q)|\ll \frac{x}{Q^{\frac \alpha 2}},
\end{align*}
which is as required.
\end{proof}

\section{Acknowledgements}
I would like to thank Stylianos Sachpazis for pointing out an error in an early draft of the precursor to this paper.  I would also like to thank Igor Shparlinski for sending some helpful results about Kloosterman sums.  Thanks also to the anonymous referees for many helpful suggestions and corrections.

\bibliographystyle{line}

\end{document}